\documentclass[oneside,12pt]{amsart}
\usepackage{amssymb, mathrsfs, amscd}
\usepackage[all]{xy}
\usepackage{enumerate}

\setlength{\vfuzz}{2mm} \setlength{\textwidth}{160mm}
\setlength{\textheight}{210mm} \setlength{\oddsidemargin}{0pt}
\setlength{\evensidemargin}{0pt}

\newtheorem*{thma}{Theorem A}
\newtheorem*{thmb}{Theorem B}
\newtheorem*{thmc}{Theorem C}
\newtheorem*{thmd}{Theorem D}

\newtheorem{theorem}{Theorem}[section]
\newtheorem{definition}[theorem]{Definition}
\newtheorem{lemma}[theorem]{Lemma}

\newtheorem{corollary}[theorem]{Corollary}

\newtheorem{remark}[theorem]{Remark}

\newcommand{\Z}{{\mathbb Z}}

\newcommand{\R}{{\mathbb R}}
\newcommand{\C}{{\mathbb C}}
\newcommand{\lieu}{{\mathfrak u}}
\newcommand{\lieg}{{\mathfrak g}}
\newcommand{\lien}{{\mathfrak n}}
\newcommand{\liev}{{\mathfrak v}}
\newcommand{\liew}{{\mathfrak w}}
\newcommand{\Aut}{\mbox{Aut}}
\newcommand{\GL}{\mbox{GL}}

\author{Karel Dekimpe*}
\address{Dept. of Math., Katholieke Universiteit Leuven, Campus Kortrijk, Kortrijk, Belgium}%
\email{Karel.Dekimpe@kuleuven-kortrijk.be}%

\author{Nansen Petrosyan**}
\address{Dept. of Math., Katholieke Universiteit Leuven, Campus Kortrijk, Kortrijk, Belgium}%
\email{Nansen.Petrosyan@kuleuven-kortrijk.be}%
\title[crystallographic actions on algebraic manifolds]{Crystallographic actions on contractible algebraic manifolds}
\thanks{*Supported by the Research Fund K.U.Leuven.}%
\thanks{**Supported by the Research Fund K.U.Leuven and FWO-Flanders Research Fellowship.}%
\subjclass{}%
\keywords{crystallographic action, algebraic manifold}%
\date{\today}
\begin{document}
\begin{abstract}
We study properly discontinuous and cocompact actions of a discrete subgroup $\Gamma$ of an algebraic group $G$ on a contractible algebraic manifold $X$. We suppose that this action comes from an algebraic action of $G$  on $X$ such that a maximal reductive subgroup of $G$ fixes a point. When the real rank of any simple subgroup of $G$ is at most one or the dimension of $X$ is at most three, we show that $\Gamma$ is virtually polycyclic. When $\Gamma$ is virtually polycyclic, we show that the action reduces to a NIL-affine crystallographic action. Specializing to NIL-affine actions, we prove that the generalized Auslander conjecture  holds up to dimension six and give a new proof of the fact that every virtually polycyclic group admits a NIL-affine crystallographic action.
\end{abstract}
\maketitle

\bigskip

\section{Introduction}

The study of crystallographic groups and actions already has a long history.
The original concept was that of the Euclidean crystallographic groups. These are discrete and cocompact subgroups
of the group of isometries of a Euclidean space. The crystallographic groups  are then exactly those groups
acting properly discontinuously, cocompactly and isometrically on a Euclidean space.
These  groups are well understood by the three Bieberbach theorems derived almost a century ago (see \cite{bieb1}, \cite{bieb2}).
The first Bieberbach theorem describes the algebraic structure of such crystallographic groups and implies that
all of them are virtually abelian. The torsion-free crystallographic groups, called Bieberbach groups, are exactly the
fundamental groups of the compact flat Riemannian manifolds.

About half a century later, L.~Auslander in his fundamental paper \cite{ausl1} generalized the
first two Bieberbach theorems to the class of almost crystallographic groups. These are subgroups of Isom$(N)$, the group of isometries
of a 1-connected nilpotent Lie group $N$ equipped with a left--invariant Riemannian metric, acting
properly discontinuously and cocompactly on $N$. The generalization of the first theorem implies that any almost crystallographic subgroup is
virtually nilpotent. The torsion-free almost crystallographic groups are then the fundamental groups of the almost flat manifolds in
the sense of Gromov (see \cite{Gromov}, \cite{Ruh}). Although this is of no influence to the sequel of our paper, we note that
the generalization of the second Bieberbach theorem in \cite{ausl1} is not correct. See \cite{LR} for the right generalization.

Around the same time and in connection with the study of affine manifolds, one also started to deal with affine crystallographic groups. These are subgroups of Aff$(\R^n)$, the group of invertible affine motions of a Euclidean $n$-space, acting properly discontinuously and cocompactly on $\R^n$.
For this class of affine crystallographic groups, an analogue of the first Bieberbach theorem has not yet been proven. In fact, two main
problems, which are more or less converses of each other, have been dominating much of the research in this field. The first problem is a question posed
by J.~Milnor in 1977 (see \cite{miln}), who asked whether or not it was true that any torsion-free polycyclic-by-finite group could be realized as
an affine crystallographic group.  On the other hand, there is a conjecture due to L.~Auslander (see \cite{ausl2}) stating that any affine crystallographic group
is polycyclic-by-finite. In fact L.~Auslander formulated this as a theorem, but it has been shown that his proof contains a gap and since then
it is referred to  as Auslander's conjecture. It is clear that a positive answer to both Milnor's question and Auslander's conjecture,
could be interpreted as a first Bieberbach theorem for affine crystallographic groups (giving a clear description of their algebraic structure).
Unfortunately, the answer to Milnor's question is negative, even for nilpotent groups, as was shown by Y.~Benoist in \cite{Benoist}.
Moreover, the Auslander conjecture is still open and is only known to hold up to dimension 6. (In fact, up to our knowledge,
a full proof has only been given up to dimension 3 (see \cite{FG}) and a positive result up to dimension 6 has been announced in \cite{AMS}).

Inspired by the negative answer to Milnor's question, one has been looking for other possible generalizations for which the analogue of
Milnor's question has a positive answer. The first positive result in this direction was given in \cite{DI}. It  was shown that
any polycyclic-by-finite group admits a properly discontinuous and cocompact action on some $\R^n$ where the action is given by
polynomial maps of bounded degree. So, although not all torsion-free polycyclic-by-finite groups can be realized as affine crystallographic groups, they
can be realized as  polynomial crystallographic groups.

To introduce yet another generalization, we again consider a 1-connected nilpotent Lie group $N$, and
define the affine group of $N$ to be the group $\mbox{Aff}(N)=N\rtimes \mbox{Aut}(N)$ acting on $N$ by $(m,h)\cdot n= m h(n)$, for all
$m,n\in N$ and all $h\in \mbox{Aut}(N)$. A subgroup $\Gamma\subseteq \mbox{Aff}(N)$ acting properly discontinuously and
cocompactly on $N$ is called a NIL--affine crystallographic group. For $N=\R^n$, this immediately reduces to the
ordinary case of affine crystallographic groups.  It has been shown in \cite{dek} and independently in \cite{baues} that any
torsion free polycyclic-by-finite group can also be realized as a NIL-affine crystallographic group, so also in this situation the
analogue of Milnor's question has a positive answer.

On the other hand, for all of these possible generalizations one can also study a generalized version of Auslander's conjecture.
Some results in this direction for polynomial and NIL-affine actions can be found in \cite{dek1, BDD}

However, up till now, all approaches to the Auslander conjecture and its generalizations have been
dealing with the specific situation (affine, polynomial or NIL-affine).
In this paper we introduce a unified approach to all known generalizations of the concept of a crystallographic group and start
the study of the generalized version of the Auslander conjecture in this most general setting.

\medskip

The general setting is obtained by replacing $\R^n$ or the 1-connected nilpotent Lie group $N$ (i.e.~the space on which the
crystallographic groups are acting) by a contractible real algebraic manifold $X$ and by considering subgroups $\Gamma\subseteq G$, or slightly more general
representations $\rho:\Gamma\rightarrow G$, where $G$ is a real algebraic group acting algebraically on $X$.

\begin{definition}\label{def-cryst}{\normalfont
Let $G$ be a real algebraic group acting algebraically on a contractible real algebraic manifold $X$.
A representation  $\rho:\Gamma\rightarrow G$ letting $\Gamma$ act properly discontinuously and cocompactly on $X$ is an
{\it algebraic crystallographic action}. The image $\rho(\Gamma)$ is said to be an {\it algebraic crystallographic group}.}
\end{definition}
In case there is no confusion possible, we will just talk about crystallographic actions and crystallographic groups.

In the examples above, the groups $\mbox{Isom}(\R^n)$, $\mbox{Isom}(N)$, $\mbox{Aff}(\R^n)$ and $\mbox{Aff}(N)$ are indeed
algebraic groups acting algebraically on $\R^n$ (or
$N$ which is isomorphic to $\R^n$ as an algebraic manifold). Also, in the case of crystallographic groups of polynomial actions of bounded degree, the
action factorizes through the action of a real algebraic group, as it is explained in \cite{BD}. It follows that all types of crystallographic groups considered thus far are
also algebraic crystallographic groups.

In all of the above types of crystallographic groups, a key observation is that a maximal reductive subgroup of the algebraic closure of $\Gamma$ has a fixed point.  This leads us to the following generalization of Auslander's conjecture.

\medskip

\parbox[t]{6in}{{\it Question 1}: If a maximal reductive subgroup of $G$ fixes a point of $X$ and $\Gamma$ acts crystallographically on $X$ via a
representation $\rho:\Gamma\rightarrow G$, is $\Gamma$ virtually polycyclic?}

\medskip

Note that a positive answer to this question would provide a full geometric characterization of the class of polycyclic-by-finite groups.
Indeed, we already know that any such group admits an algebraic crystallographic action (e.g.~a NIL-affine crystallographic), and a positive
answer would imply that these are the only groups admitting a crystallographic action with the extra condition that
a maximal reductive subgroup of $G$ has fixed point.

\medskip

\noindent Using results of Section 2, we can equivalently ask a more specific:

\medskip

\parbox[t]{6in}{{\it Question 2}: Let $\Gamma$ be Zariski dense in a connected algebraic group $G$. Let $R$ be the radical, $U$ be the unipotent radical, and $H$ be a maximal reductive subgroup of $G$. Let $V$ be a closed subgroup of $U$ normalized by $H$. Define a left action of $G$ on the homogeneous space $U/V$ of left cosets by $$wh\cdot[u] = [whuh^{-1}], \hspace{2mm}\forall u,w\in U, \forall h\in H.$$ If $\Gamma\cap R=\{e\}$ and the action of $\Gamma$ on $U/V$ is crystallographic, is $\Gamma$ trivial?}

\medskip

Expanding on the results of Soifer and Tomanov  on affine actions by  generalized Lorentz motions (see \cite[Theorem A]{Soifer}, \cite[Theorem A]{Tomanov}), we derive the following theorem,
which can be seen as a first positive indication for the general Auslander conjecture.

\begin{thma}[Theorem \ref{thm: main}] \label{thm: main_intro} {\normalfont Let $\Gamma$ be a  subgroup of an algebraic group $G$. Let $G=UH$ where $U$ is the unipotent radical and $H$ is a maximal reductive subgroup. Suppose $G$ acts algebraically on a contractible algebraic manifold $X$ such that there exists $x_0\in X$ with $Hx_0=x_0$. Suppose $\Gamma$ acts crystallographically on $X$.
\begin{enumerate}[(a)]
\item If the real rank of any simple subgroup of $G$ does not exceed one, then $\Gamma$ is virtually polycyclic.
\smallskip
\item If $\Gamma$ is a virtually polycyclic, Zariski dense subgroup of $G$, then $X$ is $G$-equivariantly isomorphic to $U$ with the $G$-action  given by
$$wh\cdot u = whuh^{-1}, \hspace{2mm}\forall u,w\in U, \forall h\in H.$$
\end{enumerate}}
\end{thma}

The second part of the theorem may be interpreted by deducing that every such action must necessarily be NIL-affine crystallographic and in this sense rigid (see Remark \ref{rmk: aftermain}). To illustrate the practicality of this result, for an arbitrary virtually polycyclic group $\Gamma$, we construct a crystallographic action on a contractible algebraic manifold and use this rigidity  to obtain that $\Gamma$ admits a NIL-affine crystallographic action (see Theorem \ref{thm: action}). In this way, we give a new proof of the aforementioned result appearing in \cite{dek} and  in \cite{baues}.

Next, we specialize to NIL-affine crystallographic actions and show that the generalized Auslander conjecture remains valid
up to dimension six.

\begin{thmb}[Theorem \ref{thm: app1}] \label{thm: app1_intro} {\normalfont Let $\Gamma$ be a NIL-affine crystallographic group of a 1-connected nilpotent Lie group $N$. If the real rank of any simple subgroup of the algebraic closure of $\Gamma$ does not exceed one, then $\Gamma$ is virtually polycyclic and  $N$ is isomorphic to the unipotent radical of the algebraic closure of $\Gamma$.}
\end{thmb}

\begin{thmc}[Theorem \ref{thm: app2}] \label{thm: app2_intro} {\normalfont Suppose $\Gamma$ is a NIL-affine crystallographic group of a 1-connected nilpotent Lie group $N$ of dimension at most six. Then $\Gamma$ is virtually polycyclic and  $N$ is isomorphic to the unipotent radical of the algebraic closure of $\Gamma$.}
\end{thmc}

At last, we consider groups that  admit properly discontinuous actions on a contractible algebraic manifold of dimension at most three. Consequently, we answer affirmatively  Question 1 up to this dimension.
\begin{thmd}[Theorem \ref{thm: app3}] {\normalfont Let $\Gamma$ be a finitely generated Zariski dense subgroup of a connected algebraic group $G$. Let $H$ be a maximal reductive subgroup of $G$. Suppose $G$ acts algebraically on a contractible algebraic manifold $X$ such that there exists $x_0\in X$ with $Hx_0=x_0$. Suppose $\Gamma$ acts properly discontinuously on $X$.
\begin{enumerate}[(a)]
\item If $\mbox{dim}(X)=2$, then $\Gamma$ is virtually polycyclic.
\smallskip
\item If  $\mbox{dim}(X)=3$ and $\Gamma$ is not virtually polycyclic, then $\Gamma$ is virtually free, $X$ is $G$-equivariantly isomorphic to $\R^3$, where $G$ acts by Lorentz transformations on $\R^3$.
\end{enumerate}}
\end{thmd}
For examples and a survey of actions by Lorentz transformations, we refer the reader to \cite{DG_1}, \cite{DG_2}, and \cite{CDGM}.

\section{Crystallographic Actions}

In this section, we derive the necessary lemmas for our main results. We say that $X$ is an {\em algebraic manifold} if it is a nonsingular real affine variety (see \cite{BCR}, \cite{Shaf}). By an isomorphism of algebraic manifolds, we mean an isomorphism (biregular map) of the affine varieties. We say that an algebraic group $G$ acts {\em algebraically} on $X$ if the action map $G\times X\rightarrow X$ is a morphism of algebraic varieties. By an algebraic group, we will always mean a real algebraic group, which in turn will be the set of real points of a linear algebraic group.

In general setting, $\Gamma$ will be a subgroup of a real linear algebraic group $G$ that acts algebraically on a contractible algebraic manifold $X$ such that the restriction of the action to $\Gamma$ is properly discontinuous and cocompact. As already stated in Definition~\ref{def-cryst}, we will call such an action by $\Gamma$ {\em crystallographic}.

Note that, since $G$ acts continuously and  $\Gamma$ acts properly discontinuously on $X$, $\Gamma$ must necessarily be discrete in $G$. Of course, the notions
``properly discontinuous'' and ``discrete'' refer to the usual topology and not to the  Zariski topology.

The first few lemmas we present contain some small but indispensable results related to properly discontinuous and cocompact actions.

\begin{lemma}\label{lem: lem1} {\normalfont Suppose $G$ is a group acting on a locally compact Hausdorff space $X$. Let $\pi: X\rightarrow X/G$ denote the natural quotient map. Then, for any closed compact subset $K\subseteq X/G$, there exists a compact set $K'\subseteq X$ such that $\pi(K')=K$.}
\end{lemma}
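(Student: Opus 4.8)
The plan is to build $K'$ out of relatively compact neighborhoods of a family of chosen preimages, and then trim the result so that its image is exactly $K$. The one topological fact I would isolate at the outset is that the quotient map $\pi$ is \emph{open}: for any open $U\subseteq X$ one has $\pi^{-1}(\pi(U))=\bigcup_{g\in G}gU$, which is a union of the open sets $gU$ and hence open, so by the definition of the quotient topology $\pi(U)$ is open. This is exactly what lets me push an open cover upstairs back down to an open cover of $K$.

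First I would, for each $k\in K$, choose a preimage $x_k\in\pi^{-1}(k)$, and then use local compactness of $X$ to select an open neighborhood $V_k\ni x_k$ whose closure $\overline{V_k}$ is compact. By openness of $\pi$, the sets $\pi(V_k)$ are open in $X/G$ and cover $K$. Since $K$ is compact, finitely many suffice: there exist $k_1,\dots,k_n$ with $K\subseteq\bigcup_{i=1}^n\pi(V_{k_i})$. I then set $C=\bigcup_{i=1}^n\overline{V_{k_i}}$, a finite union of compact sets and hence compact.

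At this point $\pi(C)\supseteq K$, but it may be strictly larger, so I would cut $C$ down by intersecting with the saturation of $K$: put $K'=C\cap\pi^{-1}(K)$. Here the hypothesis that $K$ is \emph{closed} does real work, because $X/G$ need not be Hausdorff and so compactness of $K$ alone would not force it to be closed; with $K$ closed, $\pi^{-1}(K)$ is closed, and $K'$ is then a closed subset of the compact set $C$, hence itself compact. Finally I would verify the two inclusions. The containment $\pi(K')\subseteq K$ is immediate from $K'\subseteq\pi^{-1}(K)$. For the reverse, given $k\in K$ pick an index $i$ with $k\in\pi(V_{k_i})$ and a point $v\in V_{k_i}$ with $\pi(v)=k$; then $v\in\overline{V_{k_i}}\subseteq C$ and $\pi(v)=k\in K$, so $v\in K'$ and $k\in\pi(K')$. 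Thus $\pi(K')=K$, as required.

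There is no deep obstacle here; the two points that genuinely need attention are the openness of $\pi$ (which relies on the action being by a \emph{group}, so that $\pi^{-1}(\pi(U))$ is $G$-invariant and therefore open) and the indispensability of the closedness of $K$ in the absence of Hausdorffness of the quotient, which is what keeps $K'$ compact. Everything else is the routine compactness-and-trimming argument.
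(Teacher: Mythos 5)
Your proposal is correct and follows essentially the same argument as the paper: cover the preimage of $K$ by relatively compact open sets, use openness of $\pi$ to extract a finite subcover of $K$ downstairs, and intersect the union of the corresponding closures with $\pi^{-1}(K)$ to get a compact set mapping onto $K$. You merely make explicit two points the paper leaves implicit (the openness of $\pi$ and the verification that $\pi(K')\supseteq K$), which is fine.
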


\begin{proof} Let $S$ be the preimage of $K$. Since $K$ is closed, $S$ is also closed. For each $x\in S$, let $U_x$ be a neighborhood of $x$ in $X$ such that $\overline{U}_x$ is compact.  As $\{U_x \;|\; x\in S\}$ covers $S$, $\{\pi(U_x) \;|\; x\in S\}$ is an open cover of $K$. So, we can find a finite
 subcover $\{\pi(U_{x_i}) \;|\; x_i \in S, i\in I\}$ of $K$. Let $K'= \bigcup_{i\in I} \overline{U}_{x_i}\cap S.$ Then $K'$ is a closed subset of a compact set and it is therefore compact. Clearly, $\pi(K')=K$.
\end{proof}

\begin{lemma} \label{lem: lem2}{\normalfont Let $\Gamma$ be a group acting properly discontinuously and cocompactly on a connected locally compact Hausdorff space $X$. Then $\Gamma$ is finitely generated.}
\end{lemma}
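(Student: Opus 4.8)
The plan is to exhibit a finite generating set arising from a compact ``coarse fundamental domain'' and then to use connectedness of $X$ to show that this set already generates all of $\Gamma$. First I would produce an open set whose $\Gamma$-translates cover $X$ but whose closure is still compact. Since the action is cocompact, the quotient $X/\Gamma$ is compact; applying Lemma~\ref{lem: lem1} to $K=X/\Gamma$ yields a compact set $K'\subseteq X$ with $\pi(K')=X/\Gamma$, that is, $\Gamma K'=X$. Because $X$ is locally compact Hausdorff, I can cover $K'$ by finitely many open sets with compact closure and take their union to obtain an open set $U\supseteq K'$ with $\overline U$ compact; then $\Gamma U=X$ as well. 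The purpose of this step is to have an \emph{open} set, so that its translates are open and amenable to a connectedness argument, while keeping its closure \emph{compact}, so that proper discontinuity still applies.

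Next I would define
$$S=\{\gamma\in\Gamma : \gamma\overline U\cap\overline U\neq\emptyset\}.$$
Proper discontinuity of the action, applied to the compact set $\overline U$, guarantees that $S$ is finite. Let $\Gamma'=\langle S\rangle$ be the subgroup it generates; the claim is that $\Gamma'=\Gamma$, which gives the result.

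To prove the claim I would partition $X$ into two pieces
$$Y=\bigcup_{\gamma\in\Gamma'}\gamma U,\qquad Z=\bigcup_{\gamma\in\Gamma\setminus\Gamma'}\gamma U,$$
both open as unions of open sets, with $Y\cup Z=\Gamma U=X$. The key point is that $Y$ and $Z$ are disjoint: if $\gamma_1 U$ and $\gamma_2 U$ met with $\gamma_1\in\Gamma'$, then $U\cap\gamma_1^{-1}\gamma_2 U\neq\emptyset$, whence $\overline U\cap\gamma_1^{-1}\gamma_2\overline U\neq\emptyset$, so $\gamma_1^{-1}\gamma_2\in S\subseteq\Gamma'$ and therefore $\gamma_2\in\Gamma'$, a contradiction. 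Since $X$ is connected and $Y$ contains the nonempty set $U$, connectedness forces $Z=\emptyset$; as $U\neq\emptyset$ this means $\Gamma\setminus\Gamma'$ is empty, i.e. $\Gamma=\langle S\rangle$ is finitely generated.

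The main (and essentially only) obstacle is the tension resolved in the first step: finiteness of $S$ via proper discontinuity requires compact sets, whereas separating $Y$ from $Z$ via connectedness requires open sets, and passing from $K'$ to the pair $(U,\overline U)$ is exactly what reconciles these two requirements. Once that is in place, the remainder of the argument is formal.
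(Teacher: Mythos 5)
Your proof is correct and follows essentially the same route as the paper: both obtain a compact set $C$ with $\Gamma C=X$ via Lemma~\ref{lem: lem1}, enlarge it to an open set $U$ with compact closure, use proper discontinuity to see that the returning set $\mathscr{F}=\{\gamma : \gamma U\cap U\neq\emptyset\}$ is finite, and then use connectedness of $X$ to show $\langle\mathscr{F}\rangle=\Gamma$ by splitting $X$ into the two open pieces swept out by $\langle\mathscr{F}\rangle$ and its complement. The only cosmetic difference is that you define the finite set using $\overline{U}$ rather than $U$, which changes nothing of substance.
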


\begin{proof} By Lemma \ref{lem: lem1}, we can find a compact subset $C$ of $X$ such that $\Gamma C=X$. Let $U$ be open subset containing $C$ such that $\overline{U}$ is compact. Then, the set $\mathscr{F}=\{\gamma\in \Gamma \;|\; \gamma U \cap U\ne \emptyset\}$ is finite. We claim that $\Gamma=\langle \mathscr{F}\rangle$.

Let $\Lambda=\langle \mathscr{F}\rangle$ and set $A=\bigcup_{\gamma\in\Lambda} \gamma U$ and $B=\bigcup_{\gamma\in\Gamma \smallsetminus\Lambda} \gamma U$. Since $A$ and $B$ are open and $A\cup B=X$, it follows that either $A\cap B\ne\emptyset$ or $B=\emptyset$. If $B\ne \emptyset$, then there exists $\gamma'\in \Gamma \smallsetminus\Lambda$  for which $\gamma'U\cap A\ne \emptyset$. Hence, there exists $\gamma \in \Lambda$ such that $\gamma'U\cap \gamma U\ne \emptyset$. This implies $\gamma^{-1}\gamma'U\cap U\ne \emptyset$, showing that $\gamma^{-1}\gamma' \in \Lambda$. But then $\gamma'\in \Lambda$, which is a contradiction. Thus, $B=\emptyset$ and $\Gamma= \Lambda$.
\end{proof}

The following is a well-known fact on aspherical manifolds (see 8.1 of \cite{brown}).

\begin{lemma}\label{lem: lem3} {\normalfont Let $\Gamma$ be a group acting freely and properly discontinuously on an $n$-dimensional, contractible topological manifold $M$ without boundary. Then $cd(\Gamma)\leq n$ with equality if and only if $M/\Gamma$ is compact.}
\end{lemma}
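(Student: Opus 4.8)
The plan is to identify the quotient $Y=M/\Gamma$ as an aspherical $n$-manifold and to translate $cd(\Gamma)$ into the top-degree cohomology of $Y$ with twisted coefficients. First I would note that freeness and proper discontinuity make $\pi\colon M\to Y$ a covering map; since $M$ is contractible it is the universal cover, so $\pi_1(Y)\cong\Gamma$ and the higher homotopy groups of $Y$ vanish, i.e.\ $Y$ is a $K(\Gamma,1)$. Being an Eilenberg--MacLane space, $Y$ computes group cohomology: for every $\Z\Gamma$-module $A$ one has $H^k(\Gamma;A)\cong H^k(Y;\underline{A})$, where $\underline{A}$ is the local coefficient system on $Y$ determined by $A$. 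Hence $cd(\Gamma)=\sup\{k : H^k(Y;\underline{A})\neq 0 \text{ for some } A\}$, and the whole statement becomes a question about the top cohomology of the manifold $Y$.

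For the bound $cd(\Gamma)\leq n$, I would use that an $n$-manifold is paracompact of covering dimension $n$, so by Grothendieck's vanishing theorem its sheaf cohomology vanishes above degree $n$ for every sheaf, in particular for the locally constant sheaf $\underline{A}$; since $Y$ is locally contractible and paracompact, singular cohomology with local coefficients agrees with sheaf cohomology, giving $H^k(Y;\underline{A})=0$ for $k>n$. (Incidentally this also shows $\Gamma$ is torsion-free, since a nontrivial finite subgroup would act freely on $M$ and would thereby acquire finite cohomological dimension, which is impossible.)

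The equality is governed by Poincaré duality on $Y$. If $Y$ is compact, it is a closed $n$-manifold, and twisted duality with the orientation module $\Z^w$ gives $H^n(Y;\Z^w)\cong H_0(Y;\Z)\cong\Z\neq 0$; as $\Z^w$ is a $\Z\Gamma$-module, this forces $cd(\Gamma)=n$. If $Y$ is non-compact, I would show $H^n(Y;\underline{A})=0$ for all $A$: Poincaré--Lefschetz duality identifies $H^n(Y;\underline{A})$ with the Borel--Moore homology $H_0^{\mathrm{BM}}(Y;\underline{A}\otimes\Z^w)$, and the locally finite $H_0$ of a connected non-compact space vanishes (a point class bounds a locally finite ray running to infinity). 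Thus $cd(\Gamma)\leq n-1<n$, and combining both directions gives equality precisely when $Y$ is compact.

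The delicate point is the non-compact direction, together with the choice to argue sheaf- and duality-theoretically rather than via cell structures. For a general topological manifold one cannot simply triangulate or take a handle decomposition to produce a short free $\Z\Gamma$-resolution, so I would rely on Poincaré duality for topological manifolds (through \v{C}ech/sheaf cohomology and Borel--Moore homology) and on the vanishing of $H_0^{\mathrm{BM}}$ for connected open manifolds. In our applications $M=X$ is a smooth (algebraic) manifold, where this step is easier: an open smooth $n$-manifold deformation retracts onto a complex of dimension at most $n-1$, which again yields $H^n(Y;\underline{A})=0$.
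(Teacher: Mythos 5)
Your argument is correct and coincides with the standard proof: the paper offers no proof of its own, simply citing Proposition 8.1 of Brown's \emph{Cohomology of Groups}, and the route you take --- $M/\Gamma$ is an aspherical $n$-manifold, so $cd(\Gamma)\leq n$ by dimension-theoretic vanishing of sheaf cohomology, with equality detected by twisted Poincar\'e duality in the compact case and by the vanishing of top-degree cohomology with arbitrary local coefficients on a connected noncompact manifold in the other --- is exactly the argument behind that reference. The only implicit hypotheses (paracompactness of $M$, Hausdorffness of the quotient) are automatic in the paper's setting, where $M=X$ is an affine algebraic manifold and the action is properly discontinuous.
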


In our study of algebraic actions, we will often deal with actions of unipotent groups and their homogeneous spaces. In the next lemma, we collect some basic results in this context.

\begin{lemma}\label{lem: lem4} {\normalfont Suppose $U$ is a connected, unipotent group.
\begin{enumerate}[(a)]
\item If $V$ is a closed, connected subgroup of $U$, then $U/V$ is an algebraic manifold isomorphic to $\mathbb R^k$ where $k=\mbox{dim}(U)-\mbox{dim}(V)$.
\smallskip
\item Suppose $U$ acts algebraically on an algebraic manifold $X$. Then for each $x\in X$, the orbit $Ux$ is Zariski closed in $X$. Also, the isotropy group $U_x$ of $x$ is connected subgroup of $U$.
\smallskip
\item With the assumptions of part (b), for each $x\in X$, $Ux$ is isomorphic to $\mathbb R^k$ where $k=\mbox{dim}(U)-\mbox{dim}(U_x)$.
\smallskip
\end{enumerate}}
\end{lemma}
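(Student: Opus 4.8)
The plan is to treat the three parts separately: part (a) by induction on $\dim U$, and parts (b) and (c) by feeding (a) into the standard structure theory of unipotent groups in characteristic zero. Throughout I would use that for a connected unipotent group $U$ the exponential map $\exp:\lieu\to U$ is an isomorphism of algebraic varieties (with polynomial inverse $\log$), so in particular $U\cong\R^n$ with $n=\dim U$; that in characteristic zero every closed subgroup of $U$ is automatically connected and unipotent and corresponds to a Lie subalgebra of $\lieu$; and that quotients of $U$ by closed subgroups exist as smooth affine varieties.

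For part (a), I would first record that $U/V$ exists as an affine algebraic manifold (Chevalley's quotient construction, together with the affineness of quotients of affine varieties by unipotent groups), and then induct on $\dim U$. The base case is trivial. For the inductive step I would pick a central one-parameter subgroup $Z\cong\R$ of $U$, which exists because the center of a nontrivial unipotent group is nontrivial, and split into two cases. If $Z\subseteq V$, then $Z\trianglelefteq U$ and $U/V\cong (U/Z)/(V/Z)$; since $\dim(U/Z)=n-1$, induction gives $U/V\cong\R^{(n-1)-(m-1)}=\R^{\,n-m}$ where $m=\dim V$. If $Z\not\subseteq V$, then $Z\cap V=\{e\}$ because $Z$ has no proper nontrivial connected subgroup, so $W:=ZV$ is a closed subgroup with $V\trianglelefteq W$ and $W/V\cong Z\cong\R$; the natural map $U/V\to U/W$ is then a $\mathbb{G}_a$-torsor, while $U/W\cong (U/Z)/(ZV/Z)\cong\R^{\,n-1-m}$ by induction, using $ZV/Z\cong V/(Z\cap V)=V$. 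Since $\mathbb{G}_a$-torsors over affine space are trivial (the group $H^1(\R^{\,n-1-m},\mathcal O)$ classifying them vanishes), the bundle admits a section and $U/V\cong\R\times\R^{\,n-1-m}=\R^{\,n-m}$. A more hands-on alternative, if one prefers to avoid cohomology, is to choose a strong Malcev basis $X_1,\dots,X_n$ of $\lieu$ with $X_1,\dots,X_m$ spanning $\liev$ and check directly that $(t_{m+1},\dots,t_n)\mapsto \exp(t_nX_n)\cdots\exp(t_{m+1}X_{m+1})V$ is a polynomial isomorphism $\R^{\,n-m}\to U/V$.

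For part (b), the Zariski-closedness of every orbit $Ux$ is exactly the Kostant--Rosenlicht theorem for actions of unipotent groups on affine varieties, which I would invoke directly. The isotropy group $U_x$ is the fiber over $x$ of the orbit morphism $u\mapsto ux$, hence a Zariski closed subgroup of $U$; being a closed subgroup of a unipotent group in characteristic zero, it is automatically connected. Part (c) then combines (a) and (b): the orbit map induces a bijective morphism $U/U_x\to Ux$ of smooth varieties, which in characteristic zero is an isomorphism by Zariski's main theorem (the target is a smooth, hence normal, closed orbit and the map is birational, being bijective and generically separable). Applying part (a) to the connected subgroup $U_x$ then yields $Ux\cong U/U_x\cong\R^k$ with $k=\dim U-\dim U_x$.

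The main obstacle I anticipate is the geometric input that makes the inductions and identifications rigorous over $\R$ rather than over an algebraically closed field: the triviality of the $\mathbb{G}_a$-torsor (equivalently, the existence of a polynomial section) in the second case of (a), and the passage from a bijective orbit morphism to an isomorphism in (c), together with the bookkeeping ensuring that the real points behave as expected (Zariski density and connectedness of $U(\R)$, and closedness of the real orbit). Each of these is clean precisely because $U$ is unipotent and the characteristic is zero — this is where the polynomiality of $\exp$, the automatic connectedness of closed subgroups, and the normality/separability used in Zariski's main theorem all enter — so I would cite these facts carefully rather than reprove them.
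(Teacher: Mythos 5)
Your proposal is correct, and for part (a) your primary route is genuinely different from the paper's. The paper proves (a) in one stroke: it takes a weak Mal'cev basis $\{X_1,\dots,X_n\}$ of $\lieu$ passing through $\liev$ and cites Corwin--Greenleaf (Theorem 1.2.12) for the fact that $(t_1,\dots,t_k)\mapsto \exp(t_1X_{n-k+1})\cdots\exp(t_kX_n)\cdot V$ is an isomorphism of algebraic manifolds $\R^k\to U/V$ --- this is exactly the ``hands-on alternative'' you sketch at the end, so your fallback coincides with the paper's actual argument, while your main argument (induction on $\dim U$ through a central copy of $\mathbb{G}_a$, plus triviality of $\mathbb{G}_a$-torsors over affine bases via $H^1(X,\mathcal{O}_X)=0$) replaces the citation with a self-contained geometric induction at the cost of having to justify existence and affineness of the quotients and the torsor structure of $U/V\to U/W$. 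For (b) the two arguments are essentially identical in substance: the paper also dismisses closedness of orbits as ``standard'' (this is Kostant--Rosenlicht, as you name it), and its connectedness proof --- any $u\in U_x$ lies on the one-parameter subgroup $u^t$, which is the smallest algebraic subgroup containing $u$ and hence sits inside $U_x$ --- is precisely the mechanism behind your quoted fact that closed subgroups of unipotent groups in characteristic zero are connected. For (c) the paper simply says it follows from (a) and (b); you supply the missing step (the bijective orbit morphism $U/U_x\to Ux$ is an isomorphism onto the smooth, hence normal, closed orbit by Zariski's main theorem in characteristic zero), which is a welcome precision rather than a deviation. The one place to be careful in your version is the passage from varieties to real points (a $\mathbb{G}_a$-torsor trivializes over $\R$ since the section is defined over the ground field, and $U(\R)x$ really is the full real orbit for unipotent $U$); you flag exactly these issues, and they are handled automatically in the paper's approach because the Mal'cev coordinate map is written down explicitly with polynomial inverse.
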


\begin{proof} Let $\{X_1, \dots , X_n\}$ be a weak Mal'cev basis of the Lie algebra $\lieu$ through the subalgebra $\liev$ where $U=\mbox{exp}(\lieu)$ and $V=\mbox{exp}(\liev)$.
Then the map $\phi:\R^{k}\rightarrow U/V$ given by $$(t_1, \dots , t_{k})\mapsto  \mbox{exp}(t_1 X_{n-k+1})\dots\mbox{exp}(t_{k} X_n)\cdot V$$ is an isomorphism of algebraic manifolds (see Theorem 1.2.12 and proceeding Remark 1 of \cite{CG}). This proves (a).

The fact that the orbit $Ux$ is  Zariski closed in $X$ is  now standard and was first proven independently  by Kostant and Rosenlicht (see \cite{Ros}). For the second part of (b), we let $u\in U_x$ and note that the one parameter subgroup $u^t$ is the minimal algebraic subgroup containing $u$. Therefore, $u^t\leq U_x$, proving that $U_x$ is connected.

Part (c) follows directly from (a) and (b).
\end{proof}

The next lemma is a generalization of a result first observed by Margulis on affine crystallographic actions (see Prop.1 of \cite{Soifer}).

\begin{lemma}\label{lem: lem5} {\normalfont Let $\Gamma$ be a subgroup of an algebraic group $G$. Let $G=UH$ where $U$ is the unipotent radical and $H$ is a reductive subgroup. Suppose $G$ acts algebraically on a contractible algebraic manifold $X$ such that there exists $x_0\in X$ with $Hx_0=x_0$. If $\Gamma$ acts crystallographically on $X$, then $U$ acts transitively on $X$.}
\end{lemma}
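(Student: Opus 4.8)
The plan is to prove the equivalent statement that the single orbit $Y := Ux_0$ is all of $X$. Since $H$ fixes $x_0$ and $G=UH$, we have $Gx_0 = UHx_0 = Ux_0 = Y$, so $Y$ is the full $G$-orbit of $x_0$ and is in particular invariant under $\Gamma\subseteq G$. By Lemma \ref{lem: lem4}, $Y$ is Zariski closed in $X$ (hence closed in the Euclidean topology, since the Zariski topology is coarser) and is isomorphic as an algebraic manifold to $\R^k$, where $k=\dim(U)-\dim(U_{x_0})$; the orbit map realises $Y$ as an \emph{embedded} closed submanifold. The whole argument then reduces to showing $\dim(Y)=\dim(X)=:n$, because a closed embedded submanifold of full dimension in the connected manifold $X$ is open by invariance of domain, and a nonempty subset that is both open and closed in the connected space $X$ must equal $X$.

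To compare these dimensions I would use cohomological dimension via Lemma \ref{lem: lem3}. First, $\Gamma$ is finitely generated by Lemma \ref{lem: lem2}, and since it is a subgroup of the linear group $G$ it admits, by Selberg's lemma, a torsion-free subgroup $\Gamma_0$ of finite index. A torsion-free group acting properly discontinuously on a manifold acts freely, so $\Gamma_0$ acts freely, properly discontinuously, and cocompactly on the contractible $n$-manifold $X$; Lemma \ref{lem: lem3} then gives $cd(\Gamma_0)=n$.

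Next I would run the same computation on $Y$. Because $Y$ is closed and $\Gamma$-saturated, its complement is open and $\Gamma$-invariant, so the image of $Y$ in the compact quotient $X/\Gamma$ is closed and therefore compact; thus $\Gamma$ acts cocompactly on $Y$, and since cocompactness passes to finite-index subgroups (a finite cover of a compact space is compact) the same holds for $\Gamma_0$. As $Y\cong\R^k$ is a contractible $k$-manifold on which $\Gamma_0$ acts freely, properly discontinuously, and cocompactly, Lemma \ref{lem: lem3} yields $cd(\Gamma_0)=k$. Comparing the two values forces $k=n$, i.e.\ $\dim(Y)=\dim(X)$, which finishes the proof as explained above.

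The steps needing the most care are, I expect, twofold: (i) the reduction to a torsion-free finite-index subgroup $\Gamma_0$, which is what allows Lemma \ref{lem: lem3} to be applied to a free action in both the ambient and the restricted settings; and (ii) the verification that $\Gamma$ acts cocompactly on the orbit $Y$, which hinges on $Y$ being closed and saturated so that $Y/\Gamma$ sits as a closed subspace of the compact $X/\Gamma$. The conceptual crux is the equality of cohomological dimensions: once both actions are recognised as free, properly discontinuous, and cocompact on contractible manifolds, Lemma \ref{lem: lem3} supplies $cd(\Gamma_0)=n$ and $cd(\Gamma_0)=k$ simultaneously, and invariance of domain converts the resulting dimension equality into the desired transitivity.
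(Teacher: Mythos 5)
Your proof is correct and follows essentially the same route as the paper: identify $Ux_0=Gx_0$ as a closed, contractible, $\Gamma$-invariant submanifold via Lemma \ref{lem: lem4}, then use Lemma \ref{lem: lem2}, Selberg's lemma, and the cohomological-dimension bound of Lemma \ref{lem: lem3} to force $\dim(Ux_0)=\dim(X)$ and hence equality. The only (harmless) difference is that you verify cocompactness of the action on the orbit to get the exact equality $cd(\Gamma_0)=k$, whereas the paper only needs the one-sided inequality $vcd(\Gamma)\leq\dim(Ux_0)$ from Lemma \ref{lem: lem3}, which requires no cocompactness on the orbit.
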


\begin{proof}Since $\Gamma$ acts crystallographically, by Lemma \ref{lem: lem2}, it is finitely generated. By Selberg's Lemma, we can find a torsion-free subgroup $\Gamma'$ of finite index in $\Gamma$. By Lemma \ref{lem: lem3}, $\mbox{cd}(\Gamma')=\mbox{dim}(X)$, implying $\mbox{vcd}(\Gamma)=\mbox{dim}(X)$.

On the other hand, $Ux_0=Gx_0$ is $\Gamma$-invariant. By Lemma \ref{lem: lem4}, it is a closed and contractible submanifold of $X$. This implies $\mbox{vcd}(\Gamma)\leq \mbox{dim}(Ux_0)$, resulting in $\mbox{dim}(Ux_0)=\mbox{dim}(X)$ and hence $Ux_0=X$.
\end{proof}

In our study of algebraic crystallographic groups, we will often be able to reduce the situation, by first studying the radical of the algebraic group $G$. With this in mind, next we will derive a few necessary results in case $G$ is solvable.

\begin{lemma}\label{lem: lem6.5}{\normalfont Let $\Gamma_1$ be a Zariski dense, solvable subgroup of an algebraic group $G_1$. Let $G_1=U_1H_1$ where $U_1$ is the unipotent radical and $H_1$ is a reductive subgroup. Then, there exists a compact subset $K\subset U_1$ such that $G_1=\Gamma_1KH_1$.}
\end{lemma}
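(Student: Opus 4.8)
The plan is to restate the conclusion as a cocompactness statement and prove it by induction on $\dim G_1$. Since $U_1\cap H_1=\{e\}$ and $G_1=U_1H_1$, the orbit map identifies $U_1$ with $G_1/H_1$, and under this identification left translation becomes the action $wh\cdot u=whuh^{-1}$ of the theorem; hence the existence of a compact $K\subseteq U_1$ with $G_1=\Gamma_1KH_1$ is equivalent to $\Gamma_1\cdot K=U_1$, i.e.\ to cocompactness of the $\Gamma_1$-action on $U_1$. Passing to the identity component of the Zariski closure (a finite-index, still Zariski dense situation handled by adjoining coset representatives), I may assume $G_1$ is connected, so that $H_1$ is a maximal torus and $G_1=U_1\rtimes H_1$. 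Two elementary facts drive everything. First, a Zariski dense subgroup $\Lambda$ of a vector group $\R^k$ is cocompact: its Euclidean closure is a closed subgroup $W\oplus\Lambda_0$ whose $\R$-span is all of $\R^k$, forcing $\Lambda_0$ to project to a full lattice in $\R^k/W$. Second, if $\Gamma_1$ is Zariski dense in $G_1$ then $[\Gamma_1,\Gamma_1]$ is Zariski dense in $[G_1,G_1]$, since the Zariski closure of $[\Gamma_1,\Gamma_1]$ is normal in $G_1$ and has abelian quotient.

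For the inductive step I would choose a minimal nonzero connected normal subgroup $A$ of $G_1$ contained in $U_1$. Being normal and unipotent, $A$ meets $Z(U_1)$ nontrivially, so by minimality $A\subseteq Z(U_1)$; thus $A\cong\R^d$ is a vector group on which $H_1$ acts linearly and irreducibly. Because $A$ is normal, $\Gamma_1\cap A$ is normal in $\Gamma_1$, so its Zariski closure is an $H_1$-invariant algebraic subgroup of $A$, hence (algebraic subgroups of a vector group being linear subspaces) an $H_1$-submodule of the irreducible module $A$: therefore \emph{$\Gamma_1\cap A$ is either trivial or Zariski dense, hence cocompact, in $A$}. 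In the dense case I pass to $\bar G_1=G_1/A$, where $\bar\Gamma_1$ is Zariski dense and, by the inductive hypothesis, acts cocompactly on $\bar U_1=U_1/A$; a compact lift of a fundamental domain upstairs together with a compact fundamental domain for the cocompact $\Gamma_1\cap A$ inside $A$ then assembles the required $K$, where centrality of $A$ in $U_1$ lets $\Gamma_1\cap A$ commute past the lift. This is the standard fact that cocompactness is closed under group extensions.

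The real obstacle is the case $\Gamma_1\cap A=\{e\}$. I would first dispose of it when $H_1$ acts nontrivially on $A$: triviality of the intersection exhibits $\Gamma_1$ as a lift of $\bar\Gamma_1$ along $1\to A\to G_1\to\bar G_1\to 1$, and since $A^{H_1}=0$ some element of $\Gamma_1$ acts on $A$ with $\rho(t)-1$ invertible, forcing the defining cohomology class to be a coboundary; this conjugates $\Gamma_1$ into a proper algebraic subgroup, contradicting Zariski density. Hence in the hard case $H_1$ acts trivially on $A\cong\R$, so $A$ is central in $G_1$.

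This last case is where I expect the genuine difficulty. After splitting $A$ off the abelianization one obtains a homomorphism $q\colon G_1\to A$ with $q(H_1)=0$ and $q(\Gamma_1)$ Zariski dense, hence cocompact, in $A$; writing $G_1=\ker(q)\times A$ gives a $\Gamma_1$-equivariant fibration $U_1=G_1/H_1\to A$ with fibre $U_1'=\ker(q)/H_1$. The trouble is that the fibre group $\Gamma_1\cap\ker(q)$ need not be Zariski dense in $\ker(q)$, so neither the base-first nor the fibre-first form of the extension principle applies verbatim, and indeed $\Gamma_1$ may meet no proper layer densely (as already happens for $\Gamma_1=\langle(1,2)\rangle$ in $\mathbb{G}_a\times\mathbb{G}_m$). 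The crux is to combine the cocompactness of $q(\Gamma_1)$ with Zariski density of $\Gamma_1$ to prevent the orbit from drifting off in the $A$-direction, i.e.\ to show that the transcendental homomorphism defining $\Gamma_1$ cannot destroy cocompactness of the joint action on $U_1'\times A$. This rigidity argument, in the spirit of the Margulis--Soifer techniques cited in the introduction, is the heart of the proof; once it is established, the extension principle for cocompact actions recombines base and fibre to give cocompactness on $U_1$, and hence the lemma.
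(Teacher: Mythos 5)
Your setup is sound and matches the paper's framing: the conclusion is indeed equivalent to cocompactness of the twisted $\Gamma_1$-action on $U_1\cong G_1/H_1$, the reduction to connected $G_1$ is harmless, and your two ``elementary facts'' are exactly the ingredients the paper uses. But the proof as organized has a genuine gap, and you name it yourself: in the case $\Gamma_1\cap A=\{e\}$ with $A$ central you end by saying that the required rigidity statement ``is the heart of the proof; once it is established\ldots'' --- it is never established. Two further problems sit in that same branch. First, the homomorphism $q\colon G_1\to A$ obtained by ``splitting $A$ off the abelianization'' need not exist: when $A\subseteq[G_1,G_1]$ (e.g.\ $A$ the center of a Heisenberg $U_1$) there is no such $q$, and this subcase is not addressed at all. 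Second, in the subcase where $H_1$ acts nontrivially on $A$, the claim that the extension class is a coboundary is only asserted; the element $\gamma_0$ with $\rho(\gamma_0)-1$ invertible need not be central in $\Gamma_1$, so the standard vanishing argument for $H^1(\bar\Gamma_1,A)$ does not apply verbatim.

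The difficulty is an artifact of filtering by a \emph{minimal} normal subgroup $A$, for which $\Gamma_1\cap A$ may well be trivial. The paper instead takes the normal subgroup to be $G_2=[G_1,G_1]$ in a single step. This is precisely where your second ``driving fact'' --- which you state but never use --- does all the work: $\Gamma_2=\Gamma_1\cap G_2$ contains $[\Gamma_1,\Gamma_1]$ and is therefore \emph{automatically} Zariski dense in the unipotent group $G_2$, hence cocompact in it (Raghunathan 2.3; this is the unipotent upgrade of your vector-group fact). The quotient $G_1/G_2$ is abelian, so it splits as (unipotent)$\times$(torus); projecting to the unipotent part, the image of $\Gamma_1$ is Zariski dense, hence cocompact, and the two compact pieces assemble exactly as in your ``dense case,'' using normality of $G_2$ in place of centrality of $A$. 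With this choice of filtration the case $\Gamma_1\cap A=\{e\}$ never arises, no cohomological splitting is needed, and no Margulis--Soifer-type rigidity enters: the lemma is soft. I would recommend rewriting your induction around $[G_1,G_1]$; everything else in your proposal then goes through.
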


\begin{proof} Since $G_1$ has a finite number of connected components, we can assume it is connected. As a general fact, the commutator subgroup $[\Gamma_1, \Gamma_1]$ is Zariski dense in $[G_1, G_1]$ (see p.59 of \cite{Borel}). Let $G_2= [G_1, G_1]$ and $\Gamma_2=G_2\cap \Gamma_1$. Then $[\Gamma_1, \Gamma_1]\leq \Gamma_2$. So, $\Gamma_2$ is Zariski dense in $G_2$. Let $\widetilde{G}_1=G_1/G_2$ and $\widetilde{\Gamma}_1=\Gamma_1/\Gamma_2$. Since $G_2$ is unipotent, we can find a compact subset $K_2$ such that $G_2=\Gamma_2K_2$ (see 2.3 of \cite{Rag}). Since $\widetilde{G}_1$ is abelian, there is a natural epimorphism $\psi: \widetilde{G}_1\rightarrow \widetilde{U}_1$ onto the unipotent radical $\widetilde{U}_1$. It follows that the image $\psi(\widetilde{\Gamma}_1)$ is Zariski dense inside $\widetilde{U}_1$ (see p.57 of \cite{Borel}). So, there exists a compact subset $\widetilde{K}_1$ in $\widetilde{U}_1$ such that $\widetilde{U}_1=\psi(\widetilde{\Gamma}_1)\widetilde{K}_1$. This shows that $\widetilde{G}_1=\widetilde{\Gamma}_1\widetilde{K}_1\widetilde{H}_1$ where $\widetilde{H}_1$ is the maximal reductive subgroup of $\widetilde{G}_1$.

 Let $G_1=U_1H_1$ where $U_1$ is the unipotent radical and $H_1$ is a reductive subgroup. Denote by $\phi:G_1\rightarrow \widetilde{G}_1$ the natural quotient homomorphism and note that $\phi(U_1)=\widetilde{U}_1$ and $\phi(H_1)=\widetilde{H}_1$. Let $K_1$ be a compact subset of $U_1$ such that $\phi(K_1)=\widetilde{K}_1$. Then, we have $G_1=\Gamma_1G_2K_1H_1=\Gamma_1K_2K_1H_1$ and $K_2K_1 \subseteq U_1$. This finishes the proof.
\end{proof}

\begin{lemma}\label{lem: lem7} {\normalfont Let $\Gamma_1$ be a Zariski dense, solvable subgroup of an algebraic group $G_1$. Let $U_1$ be the unipotent radical and $H_1$ be a maximal reductive subgroup of $G_1$. Suppose $G_1$ acts algebraically on a contractible algebraic manifold $X$ such that there exists $x_0\in X$ with $H_1x_0=x_0$ and suppose the restriction of the action to $\Gamma_1$ is properly discontinuous. Then, $\Gamma_1$ acts crystallographically on the orbit space $G_1x_0$ and $U_1$ acts freely and transitively on $G_1x_0$.}
\end{lemma}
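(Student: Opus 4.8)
The plan is to first identify the orbit space and dispose of the routine requirements, then isolate freeness as the one substantial point.

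Since $H_1 x_0 = x_0$, we have $G_1 x_0 = U_1 H_1 x_0 = U_1 x_0$. By Lemma \ref{lem: lem4}(b),(c) this orbit is Zariski closed in $X$ and isomorphic to some $\R^k$, hence a contractible manifold on which $U_1$ acts transitively; its point stabilizer $N := (U_1)_{x_0}$ is connected and $k = \dim U_1 - \dim N$. The restriction to the $\Gamma_1$-invariant subset $G_1 x_0$ of the properly discontinuous $\Gamma_1$-action on $X$ is again properly discontinuous. For cocompactness I would invoke Lemma \ref{lem: lem6.5}: writing $G_1 = \Gamma_1 K H_1$ with $K \subset U_1$ compact, every point $g x_0 = \gamma k h x_0 = \gamma(k x_0)$, so $G_1 x_0 = \Gamma_1(K x_0)$ with $K x_0$ compact. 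Thus $\Gamma_1$ acts crystallographically on $G_1 x_0$, and it is finitely generated by Lemma \ref{lem: lem2}. Combining Selberg's Lemma (a torsion-free finite-index $\Gamma_1' \le \Gamma_1$ acts freely, since properly discontinuous actions have finite point stabilizers) with Lemma \ref{lem: lem3} then gives $\mathrm{vcd}(\Gamma_1) = \dim(G_1 x_0) = \dim U_1 - \dim N$.

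The hard part is freeness, i.e.\ $N = \{e\}$; I would obtain it by a second cohomological-dimension computation, this time on the model space $U_1$ itself. Since $U_1$ is unipotent and $H_1$ reductive, $U_1 \cap H_1 = \{e\}$, so $U_1 \cong G_1/H_1$ as a left $U_1$-space, and the orbit map $q : G_1/H_1 \to G_1 x_0$, $gH_1 \mapsto g x_0$ (well defined because $H_1$ fixes $x_0$), is $G_1$-equivariant, in particular $\Gamma_1$-equivariant, continuous and surjective. I would use $q$ to transport proper discontinuity downward: if a compact $C \subseteq U_1$ satisfies $\gamma C \cap C \ne \emptyset$, then $\gamma\, q(C) \cap q(C) \ne \emptyset$, so $\{\gamma : \gamma C \cap C \ne \emptyset\}$ is contained in the corresponding set for the compact $q(C) \subseteq G_1 x_0$, which is finite. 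Hence $\Gamma_1$ acts properly discontinuously on $U_1$ via the $G_1/H_1$-action. Lemma \ref{lem: lem6.5} again yields cocompactness, since $G_1 = \Gamma_1 K H_1$ gives $U_1 = G_1/H_1 = \Gamma_1 \cdot \overline{K}$, where $\overline{K}$ is the compact image of $K$. As $U_1 \cong \R^{\dim U_1}$ is contractible, Selberg's Lemma together with Lemma \ref{lem: lem3} give $\mathrm{vcd}(\Gamma_1) = \dim U_1$.

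Comparing the two computations, $\dim U_1 = \mathrm{vcd}(\Gamma_1) = \dim U_1 - \dim N$ forces $\dim N = 0$; since $N$ is connected, $N = \{e\}$. Therefore $U_1$ acts freely on $G_1 x_0 = U_1 x_0$, and transitively because $G_1 x_0$ is a single $U_1$-orbit, which completes the proof. The main obstacle, freeness, is thus reduced to the observation that one and the same group is cocompact on two contractible manifolds of dimensions $\dim U_1$ and $\dim U_1 - \dim N$, a discrepancy that a virtual cohomological dimension count can reconcile only when $N$ is trivial. The one step genuinely needing care is verifying the equivariance of $q$ and that it truly transfers proper discontinuity from $G_1 x_0$ back up to $U_1$.
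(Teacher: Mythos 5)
Your proof is correct, and the routine parts (identifying $G_1x_0=U_1x_0$ via Lemma~\ref{lem: lem4}, proper discontinuity by restriction to the closed invariant orbit, cocompactness from Lemma~\ref{lem: lem6.5}) coincide with the paper's. But for the one substantial point --- freeness of the $U_1$-action --- you take a genuinely different route. The paper argues directly: if a nontrivial $u\in U_1$ fixed $x_0$, it writes $u^i=\gamma_i c_i h_i$ with $\gamma_i\in\Gamma_1$, $c_i\in K$, $h_i\in H_1$ using Lemma~\ref{lem: lem6.5}, and reaches a contradiction in both cases: if only finitely many $\gamma_i$ occur, uniqueness of the $U_1H_1$-decomposition forces infinitely many powers $u^{i_j}$ into a fixed compact subset of $U_1$, impossible for a nontrivial unipotent element; if infinitely many occur, $x_0=\gamma_ic_ix_0$ makes the returning set of the compact set $Kx_0$ infinite, contradicting proper discontinuity. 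Your argument instead compares two cohomological-dimension computations: $\Gamma_1$ acts crystallographically both on the orbit $G_1x_0\cong U_1/N$ and on $U_1\cong G_1/H_1$ itself (proper discontinuity pulled back along the equivariant surjection $q:G_1/H_1\to G_1x_0$, cocompactness again from $G_1=\Gamma_1KH_1$), so $\mathrm{vcd}(\Gamma_1)$ equals both $\dim U_1-\dim N$ and $\dim U_1$, whence the connected group $N$ is trivial. Both are valid; yours is essentially the same dimension-counting device the paper deploys in Lemma~\ref{lem: lem5}, so it costs nothing extra in machinery (Selberg plus Lemma~\ref{lem: lem3}, with finite generation from Lemma~\ref{lem: lem2}) and is arguably more conceptual, while the paper's is more elementary and self-contained for this step. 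The one delicate point in your version --- that the returning set of a compact $C\subseteq G_1/H_1$ injects into that of $q(C)$ --- checks out, since $q$ is $\Gamma_1$-equivariant, continuous, and carries compacta to compacta.
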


\begin{proof} The fact that $\Gamma_1$ acts crystallographically follows directly  from \ref{lem: lem6.5}. Transitivity of the action of $U_1$ is clear. Next, we show that this action is also free, i.e. $U_1$ acts freely on $U_1x_0$.

In fact, it is enough to show that for each nontrivial $u\in U_1$, $ux_0\ne x_0$. Suppose otherwise, that an element $u$ in $U_1$ fixes $x_0$. By Lemma \ref{lem: lem6.5}, we have that for each integer $i$, $u^i=\gamma_ic_ih_i$ where $\gamma_i\in \Gamma_1$, $c_i\in K$, and $h_i\in H_1$. Then, $u^ix_0=\gamma_ic_ix_0$. Suppose the set $\mathscr{A}=\{\gamma_i\;|\;i\in \Z\}$ is finite. Then, there exists a infinite subsequence $\{u^{i_j}\}$ such that $u^{i_j}=\gamma c_{i_j}h_{i_j}$ where $\gamma\in \mathscr{A}$,  $c_{i_j}\in K$, and $h_{i_j}\in H_1$. Let $\gamma=\gamma_u\gamma_h$ where $\gamma_u\in U_1$ and $\gamma_h\in H_1$. Since $u^{i_j}=\gamma_u\gamma_h c_{i_j}\gamma^{-1}_h\gamma_hh_{i_j}$, it follows that $\gamma_hh_{i_j}=1$ for all $j$. Then $u^{i_j}=\gamma_u\gamma_h c_{i_j}\gamma^{-1}_h$, showing that the set $\{u^{i_j}\;|\;j\in \Z\}$ is contained in the compact set $\{\gamma_u\}K^{\gamma_h}$, which is clearly a contradiction. Hence, $\mathscr{A}$ is infinite. Now, we have $x_0=u^ix_0=\gamma_ic_ix_0$ for all $i$. This implies that the set $$\{\gamma\in \Gamma_1\;|\; Kx_0\cap \gamma Kx_0\ne \emptyset\}$$ is infinite, which is again a contradiction.
\end{proof}

\begin{lemma}\label{lem: lem5.5} {\normalfont Let $\Gamma_1$ be a discrete, finitely generated, Zariski dense subgroup of a connected solvable algebraic group $G_1=U_1H_1$ where $U_1$ is the unipotent radical and $H_1$ is a  reductive subgroup.  Suppose $G_1$ is a normal subgroup of an algebraic group $G$ and assume $G$ acts algebraically and transitively on an algebraic manifold $X$ such that there exists $x_0\in X$ with $H_1x_0=x_0$. Let $\Omega=G^0_1\cap \Gamma_1$. Then there exists a connected Lie subgroup $L$ of $G^0_1$ such that $\Omega$ is a cocompact lattice in $L$, the commutator subgroup $[G_1,G_1]$ is in $L$, and $Lx=G_1x$ for all $x\in X$.}
\end{lemma}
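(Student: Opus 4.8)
The plan is to reduce everything to a single $G_1$-orbit, build $L$ as a syndetic hull that is \emph{forced} to contain $[G_1,G_1]$, and then verify the orbit equality by a cohomological dimension count at $x_0$ which propagates to all of $X$ through the conjugacy of maximal tori.

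\emph{Reduction to one orbit.} By Lemma \ref{lem: lem7} (applied to the $G_1$-action, which is the restriction of the $G$-action), $\Gamma_1$ acts crystallographically on $M:=G_1x_0=U_1x_0$ and $U_1$ acts simply transitively on $M$. Consequently $\mbox{Stab}_{G_1}(x_0)=H_1$, every $G_1$-orbit in $X$ has dimension $d:=\dim U_1=\dim M$, and by Lemma \ref{lem: lem4} we have $M\cong\mathbb R^d$; applying Selberg's lemma and Lemma \ref{lem: lem3} to a torsion-free finite-index subgroup gives $\mbox{vcd}(\Gamma_1)=d$. I would also record that $\Gamma_1$ is discrete in $G_1$: if $\gamma_n\to e$ in $G_1$ with $\gamma_n\in\Gamma_1$, then $\gamma_nx_0\to x_0$, so for a relatively compact neighbourhood $V$ of $x_0$ the $\gamma_n$ eventually lie in the finite set $\{\gamma:\gamma\overline V\cap\overline V\ne\emptyset\}$, forcing $\gamma_n=e$ eventually.

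\emph{Construction of $L$.} Let $\phi\colon G_1\to\bar G_1:=G_1/[G_1,G_1]$. Since $[\Gamma_1,\Gamma_1]$ is Zariski dense in the simply connected unipotent group $[G_1,G_1]$ (cf.\ the proof of Lemma \ref{lem: lem6.5}) and is discrete, it is a cocompact lattice there; hence $\Gamma_1\cap[G_1,G_1]$ is cocompact in $[G_1,G_1]$. Choose a syndetic hull $\bar L$ of the finitely generated abelian group $\phi(\Gamma_1)$ in $\bar G_1$, i.e.\ a connected closed subgroup in which $\phi(\Gamma_1)$ is cocompact (such a subgroup exists by the structure theory of finitely generated subgroups of abelian Lie groups), and set $L:=\phi^{-1}(\bar L)$. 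Then $L$ is connected (connected-by-connected), contains $[G_1,G_1]$, and $\Gamma_1$ is cocompact in $L$ by the fibration $(\Gamma_1\cap[G_1,G_1])\backslash[G_1,G_1]\to\Gamma_1\backslash L\to\phi(\Gamma_1)\backslash\bar L$, whose base and fibre are compact; being discrete, $\Gamma_1$ is a cocompact lattice in $L$. This secures two of the three assertions. Finally, since $[G_1,G_1]\subseteq L\subseteq G_1$ and $G_1/[G_1,G_1]$ is abelian, $L$ is normal in $G_1$.

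\emph{The orbit condition, reduced to one point.} Because $L\trianglelefteq G_1$ and $G_1$ acts transitively on each of its orbits, all $L$-orbits inside a fixed connected orbit $G_1x$ have the same dimension; if that dimension equals $d=\dim(G_1x)$ the $L$-orbits are open and connectedness forces $Lx=G_1x$. So it suffices to show $\dim(Lx)=d$ for every $x$. Writing $x=gx_0$ with $g\in G$ (using transitivity of $G$), normality of $G_1$ gives $\mbox{Stab}_{G_1}(x)=gH_1g^{-1}$, a maximal torus of $G_1$ and hence $G_1$-conjugate to $H_1$; since $L$ is normal, $\dim(L\cap gH_1g^{-1})=\dim(L\cap H_1)$, so $\dim(Lx)=\dim L-\dim(L\cap H_1)$ is independent of $x$. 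Thus the entire orbit condition collapses to the single statement $Lx_0=M$, equivalently $\dim(L\cap H_1)=\dim L-d$.

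\emph{The main obstacle.} Comparing two computations of $\mbox{vcd}(\Gamma_1)$ isolates the remaining point: on one hand $\mbox{vcd}(\Gamma_1)=d$; on the other, $\Gamma_1$ is a cocompact lattice in $L$, so $\mbox{vcd}(\Gamma_1)=\dim(L/K_L)=\dim L-\dim K_L$ for a maximal compact subgroup $K_L$ of $L$. Hence $\dim L-\dim K_L=d$, and since $Lx_0=L/(L\cap H_1)\subseteq M$ we obtain $\dim(L\cap H_1)\ge\dim K_L$. Therefore $Lx_0=M$ will follow precisely once we establish the reverse inequality, that is, that $L\cap H_1$ is \emph{compact}. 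I expect this Bieberbach-type compactness to be the crux of the proof: one must rule out split-torus directions of $L$ through $H_1$, equivalently show that the linear parts $q(\Gamma_1)\subseteq H_1$ (for the projection $q\colon G_1\to H_1\cong G_1/U_1$) are relatively compact. I would attack it using proper discontinuity and cocompactness jointly with the solvability of $G_1$: a one-parameter split subgroup $\{a^s\}\subseteq L\cap H_1$ fixes $x_0$ and acts on $M\cong U_1$ by the conjugation $\mbox{conj}_{a^s}$, which has an expanding weight; writing $L=\Gamma_1D$ with $D$ compact and following $a^s x_0=\mbox{conj}_{a^s}(d_s^{-1}x_0)$ should contradict the discreteness and cocompactness of the orbit $\Gamma_1x_0$ in $M$. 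Once $L\cap H_1$ is known to be compact, the dimension count yields $Lx_0=M$, and the previous paragraph upgrades this to $Lx=G_1x$ for all $x\in X$, completing the proof.
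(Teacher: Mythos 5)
Your construction of $L$ --- pull back to $G_1$ a connected syndetic hull of the image of $\Gamma_1$ in the abelian quotient $G_1/[G_1,G_1]$, after noting that $\Gamma_1\cap[G_1,G_1]$ is a cocompact lattice in $[G_1,G_1]$ --- is exactly the paper's construction, and your verification of the first two assertions is sound. The problem is the third assertion, $Lx=G_1x$. Your reduction of it to the single equality $Lx_0=G_1x_0$, and then to the compactness of $L\cap H_1$, is correct; but you do not prove that compactness, you explicitly leave it as ``the crux'' with a heuristic sketch. So as written the proof is incomplete. Worse, the sketch aims at the wrong mechanism: a split one-parameter subgroup $\{a^s\}\subseteq H_1$ need not have an expanding weight on $U_1$ (it may centralize $U_1$ entirely), so the Anosov-type contradiction you anticipate is not always available. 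The contradiction that does always work is static: $L\cap H_1\subseteq \mbox{Stab}_L(x_0)$, and $\mbox{Stab}_L(x_0)$ is compact, because writing $L=\Gamma_1D$ with $D$ compact, any $g=\gamma d$ fixing $x_0$ satisfies $\gamma^{-1}x_0=dx_0\in Dx_0$, and proper discontinuity applied to the compact set $\{x_0\}\cup Dx_0$ confines $\gamma$ to a finite set $F$; hence $\mbox{Stab}_L(x_0)$ is a closed subset of the compact set $FD$. With that two-line observation your dimension count closes and the argument goes through.

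It is worth contrasting this with the paper's treatment of the same step, which is purely algebraic and never touches proper discontinuity, cohomological dimension, or maximal compact subgroups. Since $[G_1,G_1]\subseteq L$, one passes to $T=G_1/[G_1,G_1]$; the image $A$ of $L$ there contains the Zariski dense image of $\Gamma_1$, whose further projection to the unipotent part $V$ of $T$ is again Zariski dense, and since the image of $A$ in $V$ is a Zariski closed subgroup containing that dense set, $A$ surjects onto $V$. Hence $L$ surjects onto $U_1$ under the coordinate projection $u_1h_1\mapsto u_1$, so $LH_1'=G_1$ for every maximal reductive subgroup $H_1'$ of $G_1$, and $Lx=G_1x$ at every point fixed by some such $H_1'$. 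That route is shorter and uses only Zariski density; yours, once repaired as above, is heavier but does isolate explicitly where the properness hypothesis forces the ``linear parts'' of $L$ at $x_0$ to be compact.
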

\begin{proof} As before, let $G_2= [G_1, G_1]$ and $\Gamma_2= \Gamma_1 \cap G_2$. Then $\Gamma_2$  is Zariski dense in $G_2$. Since $G_2$ is unipotent, $\Gamma_2$ is a cocompact lattice. Let $\Lambda=\Omega/\Gamma_2$ and $T=G_1/G_2$. Then $\Lambda$ is a finitely generated subgroup of the abelian group $T$. Using one-parameter subgroups, we can easily construct a connected Lie subgroup $A$ of $T$ such that $\Lambda$ is a cocompact subgroup of $A$. Now, we let $L$ be the preimage of $A$ under the epimorphism of $G_1$ onto $T$. Clearly, $G_2\leq L$ and $\Omega$ is a cocompact lattice in $L$.

For a given $x\in X$, there exists a maximal reductive subgroup of $G_1$ that fixes it, which implies $G_1x=U_1x$. We claim that $Lx=U_1x$. To see this, it suffices to show that the standard projection from $G_1$ onto $U_1$ (mapping $u_1h_1$ to $u_1$ for $u_1\in U_1$ and $h_1\in H_1$) remains onto when restricted to $L$. By taking quotients with $G_2$, we can reduce  to the abelian case where $T$ maps onto its unipotent radical $V$ via the standard projection $\pi:T\to V$. Since  $\Lambda$ is a finite index subgroup of $\Gamma_1/\Gamma_2$, the projection $\pi(\Lambda)$ is a finite index subgroup of $\pi(\Gamma_1/\Gamma_2)$.  Because $\Gamma_1/\Gamma_2$ is  Zariski dense in $T$, $\pi(\Gamma_1/\Gamma_2)$ is Zariski dense in $V$. This implies that  $\pi(\Lambda)$ and hence $\pi(A)$ are Zariski dense in $V$. Since $\pi(A)$ is a connected Lie subgroup of  $V$, we deduce that $\pi(A)=V$.
\end{proof}

The following lemma provides us with a key tool to use an induction approach for the study of algebraic crystallographic actions.

\begin{lemma}\label{lem: lem6} {\normalfont Suppose $\Gamma$ is a Zariski dense subgroup of an algebraic group $G$. Let $\Gamma_1$ be a solvable normal subgroup of $\Gamma$ and $G_1$ be the algebraic closure of $\Gamma_1$. Let $U_1$ be the unipotent radical and $H_1$ be a maximal reductive subgroup of $G_1$. Suppose $G$ acts algebraically on a contractible algebraic manifold $X$ such that there exists $x_0\in X$ with $Hx_0=x_0$ where $H$ is a maximal reductive subgroup of $G$ containing $H_1$. Suppose $\Gamma$ acts crystallographically on $X$. Then,
\begin{enumerate}[(a)]
\item $G_1x_0$ is $G_1$-equivariantly isomorphic, as an algebraic manifold, to  $U_1$ with $G_1$-action analogous to (\ref{eq: extension2}).
\smallskip

\item $\widetilde{X}=X/{G_1}$ is $G$-equivariantly  isomorphic, as an algebraic manifold, to a quotient of $U$ by a closed connected subgroup where $G$ acts as in (\ref{eq: extension2}).
\smallskip

\item The group $\widetilde{\Gamma}= \Gamma/\Gamma_1$ is a crystallographic group of motions of $\widetilde{X}$.
\smallskip
\end{enumerate}}
\end{lemma}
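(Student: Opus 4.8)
The plan is to exploit the normality of $G_1$ in $G$ together with the structural lemmas of this section, handling the three parts in order and reserving the proper discontinuity in (c) as the delicate point. First I would record that $G_1$ is normal in $G$: since $\Gamma_1\trianglelefteq\Gamma$ and $\Gamma$ is Zariski dense in $G$, the Zariski closure $G_1$ of $\Gamma_1$ is normalized by $\Gamma$, hence by $G$; being the closure of a solvable group it is solvable, so $H_1$ is a maximal torus and $U_1\trianglelefteq G$ is its unipotent radical, sitting inside $U$. For part (a) I would simply invoke Lemma \ref{lem: lem7}: $\Gamma_1$ is Zariski dense and solvable in $G_1$, the $G_1$-action on $X$ fixes $x_0$ through $H_1$, and the restriction to $\Gamma_1$ is properly discontinuous, so $U_1$ acts freely and transitively on $G_1x_0$. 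The orbit map $U_1\to G_1x_0$, $u\mapsto ux_0$, is then a bijective morphism, and an isomorphism of algebraic manifolds by Lemma \ref{lem: lem4}(c) (the isotropy being trivial). Transporting the $G_1$-action through this identification and using $H_1x_0=x_0$ gives $wh\cdot(ux_0)=whuh^{-1}x_0$, i.e.\ precisely the action (\ref{eq: extension2}); here the ``closed connected subgroup'' is trivial because the action is free.

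For part (b) the key point is that the $G_1$-orbits coincide with the $U_1$-orbits. By Lemma \ref{lem: lem5}, $U$ acts transitively on $X$, so every point is $x=ux_0$ with $u\in U$; since $G_1$ is normal, $uH_1u^{-1}$ is a maximal reductive subgroup of $G_1$ fixing $x$, whence $G_1x=U_1\cdot uH_1u^{-1}x=U_1x$. Using $X\cong U/U_{x_0}$ (Lemma \ref{lem: lem4}), the orbit of $ux_0$ corresponds to $U_1uU_{x_0}=uV$ with $V:=U_1U_{x_0}$, a closed connected subgroup of $U$ (product of a normal closed connected subgroup with a closed connected one). Hence the orbit map descends to an isomorphism $\widetilde X=X/G_1\cong U/V$ of algebraic manifolds (Lemma \ref{lem: lem4}(a)), and a direct computation $wh\cdot[u]=[whuh^{-1}]$, using $U_1\trianglelefteq U$ and $Hx_0=x_0$, identifies the induced $G$-action with (\ref{eq: extension2}).

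For part (c), that $\widetilde\Gamma=\Gamma/\Gamma_1$ acts on $\widetilde X$ is clear ($\Gamma$ normalizes $G_1$, and $\Gamma_1\subseteq G_1$ acts trivially on the orbit space), and the projection $\pi:X\to\widetilde X$ is $\Gamma$-equivariant. Cocompactness is then immediate: $\pi$ induces a continuous surjection $\Gamma\backslash X\to\widetilde\Gamma\backslash\widetilde X$, and $\Gamma\backslash X$ is compact. The substantial part is proper discontinuity. I would fix a compact $\widetilde K\subseteq\widetilde X$, lift it to a compact $K\subseteq X$ with $\pi(K)=\widetilde K$ (Lemma \ref{lem: lem1}), and observe that $\bar\gamma\widetilde K\cap\widetilde K\neq\emptyset$ forces $\gamma k_1\in G_1k_2$ for some $k_1,k_2\in K$. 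The strategy is to produce a single compact set $C'\subseteq X$, depending only on $K$, with $G_1k\subseteq\Gamma_1C'$ for all $k\in K$; granting this, $\gamma k_1\in\Gamma_1C'$ yields $\gamma_1^{-1}\gamma k_1\in C'$ for some $\gamma_1\in\Gamma_1$, and the proper discontinuity of $\Gamma$ on $X$ (finiteness of $\{\eta\in\Gamma:\eta K\cap C'\neq\emptyset\}$) confines $\gamma_1^{-1}\gamma$ to a finite set, so that $\bar\gamma=\overline{\gamma_1^{-1}\gamma}$ takes only finitely many values.

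The main obstacle is precisely the construction of this uniform $C'$. Writing $k=qx_0$ with $q$ in a compact lift $Q\subseteq U$ of $K$ and using $G_1k=qG_1x_0$ together with the cocompactness $G_1x_0=\Gamma_1K_cx_0$ from Lemma \ref{lem: lem6.5}, one sees that $\Gamma_1$ acts on the orbit through $qx_0$ in the same way as the conjugated subgroup $q^{-1}\Gamma_1q$ acts on $G_1x_0$. I expect the crux to be showing that these conjugated actions remain cocompact with a coarse fundamental region that can be chosen uniformly as $q$ ranges over the compact set $Q$; granting this, $qC$ for a uniform compact $C\subseteq G_1x_0$ stays inside a fixed compact subset of $X$ and supplies the desired $C'$. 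A continuity and compactness argument over $Q$, built on Lemma \ref{lem: lem6.5}, should furnish this uniformity and so complete the proof of (c).
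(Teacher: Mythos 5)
Your parts (a) and (b) are essentially correct and run along the same lines as the paper (the paper phrases (b) via the isotropy group $G_{\tilde{x}_0}=VH$ with $V=U\cap G_{\tilde{x}_0}$ and the chain $\widetilde{X}\cong UH/VH\cong U/V$, which is the same subgroup $V=U_1U_{x_0}$ you identify; your observation via Lemma~\ref{lem: lem7} that the subgroup in (a) is actually trivial is a harmless strengthening). The cocompactness in (c) is also handled the same way. The problem is that you have correctly isolated the crux of (c) --- a \emph{single} compact set $C'$ with $G_1k\subseteq\Gamma_1C'$ for all $k$ in the compact lift $K$ --- and then left it unproved, ending with ``I expect\dots should furnish this uniformity.'' That is a genuine gap, and the route you sketch for closing it does not obviously work: writing $G_1qx_0=qG_1x_0=q\Gamma_1Kx_0=(q\Gamma_1q^{-1})\,qKx_0$ trades the group $\Gamma_1$ for its conjugate $q\Gamma_1q^{-1}$, a different discrete group for each $q$, and there is no direct way to convert a fundamental domain for $q\Gamma_1q^{-1}$ back into $\Gamma_1$-translates of a compact set uniformly in $q$ (note also that Lemma~\ref{lem: lem6.5} alone gives $G_1x_0=\Gamma_1KH_1x_0$, and for a general point $qx_0$ the set $H_1qx_0=\{hqh^{-1}x_0\}$ need not be bounded, so the non-uniformity is real).

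The missing ingredient is Lemma~\ref{lem: lem5.5}, which you never invoke: for $\Omega=G_1^0\cap\Gamma_1$ there is a \emph{connected Lie subgroup} $L\leq G_1^0$ in which $\Omega$ is a cocompact lattice and whose orbits satisfy $Lx=G_1^0x$ for \emph{every} $x\in X$. Writing $L=\Omega C$ with $C\subseteq L$ compact then gives at one stroke
\[
G_1K'=G_1^0K'=LK'=\Omega\, C K',
\]
so $C'=CK'$ is the uniform compact set you need, with no continuity-over-$Q$ argument at all. This is exactly how the paper finishes: it sets $\mathscr{D}=\{\gamma\in\Gamma\;|\;\gamma CK'\cap CK'\neq\emptyset\}$, deduces $\mathscr{C}\subseteq\Gamma_1\mathscr{D}$, and concludes finiteness of $\mathscr{B}=\phi(\mathscr{C})$ from finiteness of $\mathscr{D}$. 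To repair your proof, replace the final paragraph by an appeal to Lemma~\ref{lem: lem5.5} (whose hypotheses --- $\Omega$ finitely generated, Zariski dense in the connected solvable normal subgroup $G_1^0$, acting properly discontinuously, with a reductive part fixing $x_0$ --- you should verify, finite generation of $\Omega$ coming from the fact that it is a lattice-like subgroup of a crystallographic group, as in Lemma~\ref{lem: lem2} and the paper's reduction).
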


\begin{proof} For part (a), Lemma \ref{lem: lem7} shows that $G_1x_0$ is isomorphic to  $U_1$.  The fact that this isomorphism can be chosen to be $G_1$-equivariant will be evident from the proof of part (b).

To prove (b), we note that, by Lemma \ref{lem: lem5}, $X=Ux_0$. The action of $G$ on $\widetilde{X}$ is transitive. It is not difficult to see that the isotropy group $G_{\tilde{x}_0}$ at the point $\tilde{x}_0=G_1x_0$ of $\widetilde{X}$ is $VH$ where $V= U\cap G_{\tilde{x}_0}$. We define a left action of $G$ on the homogeneous space of left cosets $U/V$ by
\begin{equation} wh\cdot [u] = [whuh^{-1}], \hspace{2mm}\forall u,w\in U, \forall h\in H. \label{eq: extension2}
\end{equation}
Then, we have a $G$-equivariant isomorphism of algebraic manifolds $$\widetilde{X}\cong UH/VH\cong U/V.$$ Here, the second isomorphism is defined by $[uh]\mapsto [u]$,  for all $u\in U$, $h\in H$. This proves part (b).

For (c), let $K$ be a compact subset of $\widetilde{X}$. Let $K'$ be a compact subset of $X$ mapped onto $K$ by the projection of $X$ onto $\widetilde{X}$.  Let us consider the sets
\begin{align*}
\mathscr{B}=&\{\widetilde{\gamma}\in \widetilde{\Gamma}\;|\; \widetilde{\gamma}K\cap K\ne \emptyset \},\\
\mathscr{C}=&\{{\gamma}\in \Gamma\;|\; {\gamma}K'\cap G_1K'\ne \emptyset \}.\\
\intertext{Let $\phi:\Gamma\rightarrow \widetilde{\Gamma}$ be the quotient map. It is not difficult to see that $\phi(\mathscr{C})=\mathscr{B}$.
\newline
\indent Now, from Lemma \ref{lem: lem5.5} it follows that $G_1K'=\Omega CK'$ for some compact subset $C$ of $G^0_1$ and $\Omega=G^0_1\cap \Gamma_1$.  We define}
\mathscr{D}=&\{{\gamma}\in \Gamma\;|\;{\gamma}CK'\cap CK'\ne \emptyset \}.
\end{align*}

\noindent By our choice of the compact set $C$ it follows that $\mathscr{C}\subseteq \Gamma_1 \mathscr{D}$. Since $\Gamma$ acts properly discontinuously on $X$, $\mathscr{D}$ is a finite set. It follows that $\mathscr{B}\subseteq \phi(\Gamma_1\mathscr{D})=\phi(\mathscr{D})$ and  therefore, $\mathscr{B}$ is also finite. This proves that $\widetilde{\Gamma}$ acts properly discontinuously on $\widetilde{X}$.

To show that $\widetilde{\Gamma}$ acts cocompactly, we note that $\widetilde{X}/\widetilde{\Gamma}\cong \widetilde{X}/\Gamma$ where $\Gamma$ acts through the quotient $\Gamma/\Gamma_1$. As $\widetilde{X}/\Gamma$ is an image of the compact set $X/\Gamma$, it is clearly compact.
\end{proof}

%\begin{lemma}\label{lem: lemp1} {\normalfont Let $G$ be a connected algebraic group and let $R$ be its radical. Let $\Gamma$ be a discrete subgroup of $G$ and let $\pi:G\rightarrow G/R$ be the natural quotient map. If $\pi(\Gamma)$ is Zariski dense in $G/R$, then $\pi(\Gamma)$ is discrete.}
%\end{lemma}

When considering algebraic crystallographic actions, by means of the previous lemma, we will be able to assume that the crystallographic group  $\Gamma$ intersects trivially with the radical $R$ of $G$ and consequently that it embeds as a discrete subgroup in $G/R$. We will therefore shift our attention to actions of semisimple and more generally of
reductive algebraic groups. The last part of this section is devoted to the necessary results in this direction.

\begin{lemma}\label{lem: lem8} {\normalfont Suppose $G$ is an algebraic group. Let $R$ be the radical of $G$ and  let $S$ be the semisimple group  $G^0/R$. Suppose $\Gamma$ is a discrete subgroup of $G^0$. Set $\widetilde{\Gamma}=\Gamma/{R\cap \Gamma}$. Then $\widetilde{\Gamma}$ acts
properly discontinuously on the symmetric space $X_S$ of $S$.}
\end{lemma}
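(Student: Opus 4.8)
The plan is to identify the symmetric space as $X_S=S/K$, where $K$ is a maximal compact subgroup of $S$, and to reduce the assertion to a statement purely about $\pi(\Gamma)$, where $\pi\colon G\to S=G/R$ is the quotient map. Since $R=\ker\pi$ acts trivially on $S$, and hence on $X_S$, the action of $\Gamma$ on $X_S$ factors through $\widetilde{\Gamma}=\Gamma/(R\cap\Gamma)$; moreover $R\cap\Gamma$ is precisely the kernel of $\pi|_{\Gamma}$, so $\widetilde{\Gamma}$ maps isomorphically onto $\pi(\Gamma)$ and the induced map $\widetilde{\Gamma}\to\isom(X_S)$ is nothing but the restriction of the isometric $S$-action to $\pi(\Gamma)$. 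Thus proper discontinuity of $\widetilde{\Gamma}$ on $X_S$ is equivalent to proper discontinuity of the subgroup $\pi(\Gamma)\subseteq S$ acting on $X_S$.

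The soft half of this I would dispatch first. Because $K$ is compact, the isotropy groups of the $S$-action on $X_S=S/K$ are the conjugates of $K$, and the map $S\times X_S\to X_S\times X_S$, $(s,x)\mapsto(sx,x)$, is proper; equivalently, for every compact $C\subseteq X_S$ the set $\{s\in S:\ sC\cap C\neq\emptyset\}$ is relatively compact in $S$. Consequently, if $\pi(\Gamma)$ is a \emph{discrete} subgroup of $S$, then for each compact $C$ the set $\{\widetilde{\gamma}\in\widetilde{\Gamma}:\ \widetilde{\gamma}C\cap C\neq\emptyset\}$ is the intersection of a relatively compact set with a discrete set, hence finite, which is exactly proper discontinuity. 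So the entire content reduces to showing that $\pi(\Gamma)$ is discrete in $S$.

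This discreteness is where I expect the real difficulty to lie, and it is worth stressing that it is \emph{not} a formal consequence of $\Gamma$ being discrete in $G$. For instance, a lattice in a product $R'\times A$, with $A$ a one-parameter $\R$-split subgroup of $S$ and $R'\subseteq R$, can be discrete in $G$ while projecting densely into $A$, so that $\pi(\Gamma)$ accumulates at the identity and $\widetilde{\Gamma}$ fails to act properly discontinuously. The approach I would take to exclude this is to pass to the proper model upstairs: fix a maximal compact subgroup $K_G$ of $G$ with $\pi(K_G)=K$, note that $\Gamma$ acts properly discontinuously on the contractible manifold $G/K_G$ (again because $K_G$ is compact), and observe that $\pi$ induces a $G$-equivariant fibration $G/K_G\to X_S$ whose fibre $R/(R\cap K_G)$ is a Euclidean space. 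Non-discreteness of $\pi(\Gamma)$ corresponds exactly to a sequence of distinct $\gamma_n\in\Gamma$ with $\pi(\gamma_n)\to e$ that escape to infinity only along these solvable, non-compact fibre directions.

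To close this gap one must invoke the additional structure carried by $\Gamma$ in the situation of the paper, recorded just before the lemma: after the radical reduction of Lemma~\ref{lem: lem6} one may assume $R\cap\Gamma$ is trivial and that the crystallographic, in particular cocompact, action forces $\Gamma$ to embed as a discrete subgroup of $G/R$. Concretely, I would argue that a vertical accumulation as above would exhibit a fibre direction in $G/K_G$ along which the $\Gamma$-quotient is non-compact, contradicting the cocompactness inherited from the crystallographic hypothesis; this forces $\pi(\Gamma)$ to be discrete, whereupon the argument of the second paragraph yields proper discontinuity on $X_S$. I expect precisely this verification — translating cocompactness upstairs into the absence of vertical accumulation in the radical directions — to be the delicate point of the proof, since the bare hypothesis of discreteness of $\Gamma$ in $G$ is by itself insufficient.
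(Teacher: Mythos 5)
Your reduction of the lemma to the discreteness of $\pi(\Gamma)$ in $S$, together with your ``soft half'' (properness of the $S$-action on $S/K$ converts discreteness into proper discontinuity), is exactly the paper's argument: the paper lifts a compact $C\subseteq X_S$ to a compact $C'\subseteq S$ and compares the return set of $C$ with that of $C'K$. Where you and the paper part ways is on the hard half. The paper simply quotes Corollary 5.4 of Abels' survey for the statement that $\widetilde{\Gamma}$ is discrete in $S$, whereas you flag this as non-automatic and try to derive it from cocompactness. Your skepticism is well founded: the bare statement is false for a general connected Lie group and a general discrete subgroup. For instance, in $G=\R\times \mathrm{SL}_2(\R)$ the subgroup $\Gamma=\{(m+n\alpha,\ \mathrm{diag}(e^{m+n\beta},e^{-(m+n\beta)}))\mid m,n\in\Z\}$ is discrete when $\alpha\neq\beta$ (the map $(m,n)\mapsto(m+n\alpha,m+n\beta)$ sends $\Z^2$ to a lattice), meets $R=\R\times\{1\}$ trivially for $\beta$ irrational, and projects to a dense subgroup of the diagonal torus of $\mathrm{SL}_2(\R)$, whose action on $\mathbb{H}^2$ by translations along a geodesic is not properly discontinuous. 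So the lemma really does require a further hypothesis, and your instinct to look for one is correct.

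However, the repair you sketch does not work, and this is the genuine gap in your proposal. The lemma itself carries no crystallographic or cocompactness hypothesis, so you would be proving a different statement; and even in the paper's applications the cocompactness is of the $\Gamma$-action on $X\cong U/V$, a quotient of the unipotent radical, not on $G/K_G$. Nothing transfers cocompactness to $G/K_G$ --- indeed $\dim(G/K_G)$ is in general much larger than $\mathrm{cd}(\Gamma)=\dim(X)$, so $\Gamma$ cannot act cocompactly there --- and in Theorem \ref{thm: app3} the lemma is invoked under proper discontinuity alone. The hypothesis that actually rescues the statement in every application is Zariski density of $\Gamma$ in the algebraic group $G$ (arranged without loss of generality at the start of the proof of Theorem \ref{thm: main}). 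Granting it, the discreteness follows from Auslander's theorem (8.24 of \cite{Rag}): the identity component $F$ of $\overline{\pi(\Gamma)}$ is solvable; its Lie algebra is normalized by the Zariski dense subgroup $\pi(\Gamma)$ of $S$, and since the normalizer of a Lie subalgebra is Zariski closed, that Lie algebra is a solvable ideal of the semisimple Lie algebra of $S$, hence zero; so $F$ is trivial and $\pi(\Gamma)$ is discrete. Your proposal is missing this (or an equivalent) argument, and the mechanism it offers in its place would fail.
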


\begin{proof} Since $\Gamma$ is a discrete subgroup of $G^0$, by Corollary 5.4 in \cite{Abels}, it follows that $\widetilde{\Gamma}$ is a discrete subgroup of $S$.  Let $C$ be an arbitrary compact subset of $X_S$. Let $C'$ be a compact subset of $S$ such that $\pi(C')=C$ where $\pi :S\rightarrow X_S$ is the quotient map. Let $K$ be a maximal compact subgroup of $S$ and consider the sets
\begin{align*}
\mathscr{E}=&\{\widetilde{\gamma}\in \widetilde{\Gamma}\;|\; \widetilde{\gamma}C\cap C\ne \emptyset\},\\
\mathscr{F}=&\{\widetilde{\gamma}\in \widetilde{\Gamma}\;|\; \widetilde{\gamma}C'K\cap C'K\ne \emptyset\}.
\end{align*}
Since $\widetilde{\Gamma}$ is discrete, it acts properly discontinuously on $S$. Therefore, $\mathscr{E}= \mathscr{F}$ is finite.
\end{proof}

\begin{lemma}\label{lem: lem9} {\normalfont Let $\lieu$ be a nilpotent real Lie algebra and let $\liev$ be a subalgebra of $\lieu$ that does
not contain any non-zero ideal of $\lieu$. Assume that $\varphi\in \Aut(\lieu)$ is such that
$\varphi(\liev)=\liev$ and denote by $\bar{\varphi}$ the induced linear map on $\lieu/\liev$.
%(So $\bar{\varphi}(X+\liev)= \varphi(X) + \liev$).
If the restriction $\varphi_{|\liev}$ of $\varphi$ to $\liev$ is not unipotent, then
$\bar{\varphi}$ is also not unipotent.}
\end{lemma}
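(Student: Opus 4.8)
The plan is to prove the contrapositive: assuming $\bar\varphi$ is unipotent, I will show that $\varphi_{|\liev}$ must be unipotent as well. The main device is the multiplicative Jordan decomposition $\varphi=\varphi_s\varphi_u$ in $\GL(\lieu)$. Since $\Aut(\lieu)$ is a real algebraic subgroup of $\GL(\lieu)$, both $\varphi_s$ and $\varphi_u$ lie in $\Aut(\lieu)$; and since $\varphi_s$ is a polynomial in $\varphi$, it preserves the $\varphi$-invariant subalgebra $\liev$ and hence descends to a map on $\lieu/\liev$. The induced maps $\bar{\varphi_s}$ and $\bar{\varphi_u}$ are respectively semisimple and unipotent and they commute, so they are the Jordan components of $\bar\varphi$. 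If $\bar\varphi$ is unipotent, uniqueness of the decomposition forces $\bar{\varphi_s}=\mathrm{id}$, that is, $(\varphi_s-\mathrm{id})(\lieu)\subseteq\liev$.

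First I would complexify and decompose $\lieu_\C=\bigoplus_\lambda \lieu_\lambda$ into eigenspaces of $\varphi_s$. For $\lambda\neq1$ the operator $\varphi_s-\mathrm{id}$ acts invertibly on $\lieu_\lambda$, so the relation $(\varphi_s-\mathrm{id})(\lieu_\C)\subseteq\liev_\C$ yields $\lieu_\lambda\subseteq\liev_\C$. Thus the subspace $\mathfrak m:=\bigoplus_{\lambda\neq1}\lieu_\lambda$ is contained in $\liev_\C$; moreover $\mathfrak m\neq0$ exactly when $\varphi_s$, equivalently $\varphi$, is nontrivial on $\liev$, i.e.\ precisely when $\varphi_{|\liev}$ is not unipotent, which is the situation I must rule out. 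So I suppose $\mathfrak m\neq0$ and seek a contradiction with the hypothesis that $\liev$ contains no nonzero ideal of $\lieu$.

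The crucial structural point is the eigenvalue rule $[\lieu_\alpha,\lieu_\beta]\subseteq\lieu_{\alpha\beta}$, valid because $\varphi_s$ is an automorphism. Writing $\mathfrak n:=\lieu_1$ for the $1$-eigenspace, this gives $[\mathfrak n,\mathfrak m]\subseteq\mathfrak m$. Let $\mathfrak h$ be the Lie subalgebra of $\lieu_\C$ generated by $\mathfrak m$. Because $\liev$ is a subalgebra and $\mathfrak m\subseteq\liev_\C$, we have $\mathfrak h\subseteq\liev_\C$. I claim $\mathfrak h$ is in fact an ideal of $\lieu_\C$: using $[\mathfrak n,\mathfrak m]\subseteq\mathfrak m$ together with the Jacobi identity, an induction on bracket length shows $[\mathfrak n,\mathfrak h]\subseteq\mathfrak h$, and since $\lieu_\C=\mathfrak n\oplus\mathfrak m$ with $\mathfrak m\subseteq\mathfrak h$, it follows that $[\lieu_\C,\mathfrak h]\subseteq\mathfrak h$. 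Finally, $\mathfrak m$ is stable under complex conjugation (as $\overline{\lieu_\lambda}=\lieu_{\bar\lambda}$ and $\lambda\neq1\Leftrightarrow\bar\lambda\neq1$), hence so is $\mathfrak h$; therefore $\mathfrak h\cap\lieu$ is a nonzero real ideal of $\lieu$ with complexification $\mathfrak h\subseteq\liev_\C$, so $\mathfrak h\cap\lieu\subseteq\liev$. This contradicts the assumption that $\liev$ contains no nonzero ideal, completing the contrapositive.

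I expect the main obstacle to be the verification that the subalgebra $\mathfrak h$ generated by $\mathfrak m$ is genuinely an ideal: the naive candidate $\mathfrak m$ itself fails, since a bracket $[\lieu_\lambda,\lieu_{\lambda^{-1}}]$ of two nontrivial eigenspaces can land in the $1$-eigenspace $\mathfrak n$. What rescues the argument is twofold: these stray brackets are absorbed upon passing to the generated subalgebra $\mathfrak h$, and $\mathfrak h$ nonetheless stays inside $\liev_\C$ precisely because $\liev$ is closed under bracket. Keeping track of these two facts at once, together with the clean descent from $\lieu_\C$ back to $\lieu$, is where the care is needed.
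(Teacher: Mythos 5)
Your proof is correct, and after the common first step it diverges genuinely from the paper's. Both arguments begin identically: pass to the Jordan components inside the algebraic group $\Aut(\lieu)$ (so $\varphi_s,\varphi_u\in\Aut(\lieu)$ preserve $\liev$), complexify, and reduce to the statement that a semisimple automorphism inducing the identity on $\lieu/\liev$ must be trivial on $\liev$. From there the paper builds the filtration $\liev_i=\gamma_i(\lieu)\cap\liev$, $\lieu_i=\liev+\gamma_i(\lieu)$ from the lower central series, chooses extremal eigenvectors $V_0,V_1,V_2$, and runs a two-case analysis on the Jacobi identity with careful eigenvalue bookkeeping to reach a contradiction. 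You instead observe that $(\varphi_s-\mathrm{id})(\lieu_\C)\subseteq\liev_\C$ forces every eigenspace $\lieu_\lambda$ with $\lambda\neq1$ into $\liev_\C$, and that the subalgebra $\mathfrak{h}$ generated by $\mathfrak{m}=\bigoplus_{\lambda\neq1}\lieu_\lambda$ is an ideal: the rule $[\lieu_\alpha,\lieu_\beta]\subseteq\lieu_{\alpha\beta}$ gives $[\lieu_1,\mathfrak{m}]\subseteq\mathfrak{m}$, and the induction on bracket length via Jacobi does make $[\lieu_1,\mathfrak{h}]\subseteq\mathfrak{h}$ go through, while $\mathfrak{h}\subseteq\liev_\C$ because $\liev_\C$ is a subalgebra; conjugation-stability then descends the contradiction to a nonzero real ideal inside $\liev$. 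I checked the points where this could fail — that $\bar{\varphi}_s$ is genuinely the semisimple part of $\bar\varphi$ (so unipotence of $\bar\varphi$ forces $\bar{\varphi}_s=\mathrm{id}$), the ideal property of $\mathfrak{h}$, and the reality of $\varphi_s$ — and all are sound. Your route is shorter, avoids the case analysis entirely, and, notably, never uses nilpotency of $\lieu$ (the paper's proof does, through the lower central series), so you have in fact proved the statement for an arbitrary real Lie algebra. The one sentence I would tighten is the "exactly when" claim relating $\mathfrak{m}\neq0$ to non-unipotence of $\varphi_{|\liev}$: the implication you actually need, namely that $\varphi_{|\liev}$ non-unipotent gives $\mathfrak{m}\neq0$, is immediate since $(\varphi_s)_{|\liev}$ is the semisimple part of $\varphi_{|\liev}$, and that is all that should be asserted.
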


\begin{proof}Let us consider the complex Lie algebra $\lieu_\C= \lieu\otimes \C$ and
its subalgebra $\liev_\C=\liev \otimes \C$. It is obvious that
$\liev_\C$ also does not contain any non-zero ideal of $\lieu_\C$.
It follows that it is enough to prove the lemma in case of complex Lie algebras
instead of real Lie algebras. Therefore, in what follows, we will drop the subscripts $\C$ from
our notation and assume that $\lieu$ and $\liev$ are Lie algebras over $\C$.

Let $A=\{ \psi\in \Aut(\lieu) \;|\; \psi(\liev)=\liev\}$, then $A$ is an algebraic subgroup
of $\Aut(\lieu)$. It follows that for every $\psi\in A$, its semisimple part $\psi_s$ and
its unipotent part $\psi_u$ are in $A$. Now, for the given $\varphi$ we can also
consider $\varphi_s$ and it is obvious that $\varphi{|\liev}$ is not unipotent if and only if the restriction
$\varphi_{s|\liev}$  of $\varphi_s$ to $\liev$
%(which is also equal to the semisimple part of $\varphi_{|\liev}$)
is not unipotent.
%(i.e.\ is non-trivial).
Moreover, the induced map of $\varphi$ on $\lieu/\liev$ is unipotent
if and only if the induced map of $\varphi_s$ on $\lieu/\liev$ is trivial.
So, it suffices to show that there is no semisimple automorphism $\varphi\in \Aut(\lieu)$, with
$\varphi_{|\liev}$ non-trivial and $\bar{\varphi}$ trivial.

We will prove this by contradiction and so we assume that such a semisimple $\varphi$ exists.
Let $c$ denote the nilpotency class of $\lieu$ and let $\gamma_1(\lieu)=\lieu$ and $\gamma_{i+1}(\lieu)=[\lieu,\gamma_i(\lieu)]$ denote
the terms of the lower central series of $\lieu$. Then, $\gamma_{c}(\lieu) \neq 0$ and $\gamma_{c+1}(\lieu)=0$.

We define the following subalgebras of $\lieu$:

\[ \forall i\in \{1,2,\ldots,c\}:\; \liev_i= \gamma_i(\lieu) \cap \liev.\]
\[ \forall i\in \{1,2,\ldots,c\}:\; \lieu_i=\liev+\gamma_i(\lieu) .\]
Then, we have $\liev_1=\liev$ and $\lieu_1=\lieu$. In this way, we obtain a filtration of $\lieu$:
\[ 0\subseteq \liev_c \subseteq \cdots \subseteq \liev_2 \subseteq \liev_1 \subseteq \lieu_c \subseteq \cdots \subseteq \lieu_2 \subseteq \lieu_1.\]
Moreover, each term of this filtration is preserved by $\varphi$. Let $\liev_{c+1}=0$ and $\lieu_{c+1}=\liev=\liev_1$ and denote
by $l_i=\dim \liev_i/\liev_{i+1}$ and $k_i=\dim \lieu_i/\lieu_{i+1}$ for $1\leq i \leq c$.
As $\varphi$ is semisimple, we can choose a basis of the vector space $\lieu$ which consists of vectors
\[ V_{i,j}\in \liev \mbox{ ($1\leq i \leq c$, $1\leq j \leq l_i$) and }
 U_{p,q}\in \lieu \mbox{ ($1\leq p \leq c$, $1\leq q \leq k_i$)}\]
where each of these vectors is an eigenvector for $\varphi$ and
\begin{align*}
&\liev_r =  {\rm span}\{V_{i,j} \;|\; r \leq i \leq c,\; 1\leq j \leq l_i\} & \mbox{ for all } r\in \{1,2,\ldots,c\},\\
&\lieu_r = {\rm span}\{U_{p,q} \;|\; r \leq p \leq c,\; 1\leq q \leq k_i\}+ \liev & \mbox{ for all } r\in \{1,2,\ldots,c\}.
\end{align*}
We use $\lambda_{i,j}\in \C$ to denote the eigenvalue corresponding to the eigenvector $V_{i,j}$. By the condition on $\varphi$, we know that there
is at least one pair $(i,j)$, with $\lambda_{i,j}\neq 1$. Also, $\varphi(U_{p.q}) = U_{p.q}$, since $\varphi$ induces the identity on $\lieu/\liev$.
Let $$i_0={\rm max}\{i\in \{1,2,\ldots, c\}\;|\; \exists j\in \{1,2,\ldots, l_c\}:\;\lambda_{i,j}\neq 1\}$$ and fix a
$j_0\in \{1,2,\ldots l_c\}$ with $\lambda_{i_0,j_0}\neq 1$. As a shorthand, we will use $\lambda_0=\lambda_{i_0,j_0}$ and
$V_0=V_{i_0,j_0}$. Note that the choice of $i_0$ implies that $\varphi$ is the identity on $\liev_{i_0+1}$.
Because of the fact that $\liev$ is not an ideal of $\lieu$,
%(so $\liev\cap Z(\lieu)=0$),
we know that there exists a vector $U\in \lieu$ with $[U,V_0]\neq 0$. So, there must also exist a basis vector $U$ ($V_{i,j}$ or $U_{p,q}$)
with $[U,V_0]\neq 0$. We distinguish two possibilities, each of which will lead to a contradiction.

\medskip

\noindent\underline{{\bf Case 1:} There exists a vector $U_{p,q}$ with $[U_{p,q},V_0]\neq 0$.}\\
Let $X=[U_{p,q},V_0]$, then $\varphi(X)=[\varphi(U_{p,q}),\varphi(V_0)]= [U_{p,q},\lambda_0 V_0]=\lambda_0 X$, hence
$X$ is a (non-zero) eigenvector for the eigenvalue $\lambda_0$. If $X\not\in\liev$, this contradicts the fact that
$\varphi$ induces the identity on $\lieu/\liev$. Hence, we must have that $X\in \liev$, but then $X\in \liev_{i_0+1}$, contradicting
the fact that, by our choice of $i_0$, $\varphi$ induces the identity on $\liev_{i_0+1}$.

\medskip

\noindent\underline{{\bf Case 2:} There is no vector $U_{p,q}$ with $[U_{p,q},V_0]\neq 0$, but $[V_{i,j},V_0]\neq 0$ for some pair $(i,j)$.}\\
Let $i_1={\rm max}\{i\in \{1,2,\ldots, c\}\;|\;\exists j \in \{1,2,\ldots, l_c\}:\; [V_{i,j}, V_0]\neq 0\}$ and choose a $j_1\in \{1,2,\ldots,l_{i_1}\}$
with $[V_{i_1,j_1},V_0]\neq 0$. We use the abbreviation $V_1=V_{i_1,j_1}$ and $\lambda_1= \lambda_{i_1,j_1}$. By the choice of
$i_1$, we know that $[\liev_{i_1+1}, V_0]=0$. We observe that $\lambda_1 \lambda_0=1$, otherwise, $[V_1,V_0]\in \liev_{i_0+1}$ would be
a (non-zero) eigenvector for $\varphi$ for the eigenvalue $\lambda_0 \lambda_1\neq 1$, contradicting the choice of $i_0$.
So, we know that $\lambda_1=1/\lambda_0\neq 1$. As $[V_1,V_0]\neq 0$, there exists a basis vector $X$ of $\lieu$ with
$[ X,[V_1,V_0]]\neq 0$. By the Jacobi identity, we have that
\begin{equation}\label{jacobi}
 [X,[V_1,V_0]]= - [V_1,[V_0,X]]-[V_0,[X,V_1]]
\end{equation}
First suppose that $X=U_{p,q}$ for some values of $p$ and $q$. Since $[V_0,U_{p,q}]=0$,
%(one of the conditions of ``case 2'')
this implies $[U_{p,q},V_1]\neq 0$. As $[U_{p,q},V_1]$ is an eigenvector of eigenvalue $\lambda_1\neq 1$, we must have that
$[U_{p,q},V_1]\in \liev$. Moreover, $[U_{p,q},V_1]\in \gamma_{i_1+1}(\lieu)$ and hence $[U_{p,q},V_1]\in \liev_{i_1+1}$. But, by the choice
of $i_1$, this implies that $[V_0,[U_{p,q},V_1]]=0$. This shows that both terms on the righthand side of (\ref{jacobi}) are 0, implying
that also $ [X,[V_1,V_0]]=0$, a contradiction.

Therefore, we may assume that $X=V_{i_2,j_2}$ for some pair $(i_2,j_2)$. We use the notation $V_2=V_{i_2,j_2}$ and $\lambda_2=\lambda_{i_2,j_2}$.
Note that $\varphi[V_2,[V_1,V_0]]= \lambda_2 [V_2, [V_1,V_0]]$. Then $ [V_2, [V_1,V_0]]$ is a nonzero eigenvector associated to the eigenvalue
$\lambda_2$. As $ [V_2, [V_1,V_0]]\in \liev_{i_0+1}$, this implies that $\lambda_2=1$.

Now, we look again at the right-hand side of (\ref{jacobi}), for $X=V_2$.
As $[V_2,V_1]\in \liev_{i_1+1}$, we immediately have that $[V_0,[V_2,V_1]]=0$, by the choice of $i_1$.
As $\varphi([V_0,V_2])=\lambda_0 \lambda_2 [V_0,V_2]=\lambda_0 [V_0,V_2] $ and $[V_0,V_2]\in \liev_{i_0+1}$, the choice
of $i_0$ forces $[V_0,V_2]=0$.
Again, both terms of the right-hand side of (\ref{jacobi}) are 0, leading to the fact that $[V_2,[V_1,V_0]]=0$, a contradiction.
\end{proof}

\begin{lemma}\label{lem: lem10} {\normalfont Let $U$ be a 1-connected nilpotent Lie group and
let $H\leq \Aut(U)$ be a reductive subgroup. Assume that $V\leq U$ is a closed and
connected subgroup of $U$ which is invariant under the action of $H$. Suppose that
the induced action of $H$ on $U/V$ is faithful. Then the corresponding action of $H$ on the tangent space $T_V(U/V)$ of $U/V$ at $eV$
is faithful.}
\end{lemma}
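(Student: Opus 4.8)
The plan is to identify the tangent space and the infinitesimal action explicitly, reduce the faithfulness question to a statement about unipotent elements of a reductive group, and then use Lemma~\ref{lem: lem9} to force the relevant kernel to act trivially already on $U/V$, where the hypothesis of faithfulness can finally be invoked.

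First I would set up coordinates. Since $U$ is $1$-connected nilpotent, differentiation identifies $H\leq\Aut(U)$ with a group of automorphisms of the Lie algebra $\lieu$ preserving the subalgebra $\liev=\mathrm{Lie}(V)$, and the projection $\pi:U\to U/V$ induces an isomorphism $T_V(U/V)\cong\lieu/\liev$ under which the $H$-action on the tangent space is exactly the induced action $\varphi\mapsto\bar\varphi$ on $\lieu/\liev$. Thus the kernel of the tangent-space action is $K=\{\varphi\in H\mid \bar\varphi=\mathrm{id}\ \text{on}\ \lieu/\liev\}$, a normal algebraic subgroup of $H$, and the goal becomes $K=\{e\}$.

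Next, let $\lien$ be the largest ideal of $\lieu$ contained in $\liev$; it is $H$-invariant, $\liev/\lien$ contains no nonzero ideal of $\lieu/\lien$, and $N=\exp(\lien)$ is a normal subgroup of $U$ with $N\subseteq V$. For $\varphi\in K$ the induced map on $\lieu/\liev$ is the identity, hence unipotent, so the contrapositive of Lemma~\ref{lem: lem9} (applied to $\lieu/\lien$, $\liev/\lien$, and the automorphism induced by $\varphi$) gives that $\varphi$ restricted to $\liev/\lien$ is unipotent; combined with unipotence on the quotient $(\lieu/\lien)/(\liev/\lien)=\lieu/\liev$, this shows $\varphi$ acts unipotently on $\lieu/\lien$. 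Since $K$ is a normal subgroup of the reductive group $H$ (hence reductive in characteristic zero), its image in $\GL(\lieu/\lien)$ is reductive and consists of unipotent elements, hence is trivial. Therefore every $\varphi\in K$ satisfies $\varphi(X)-X\in\lien$ for all $X\in\lieu$.

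Finally I would pass from this infinitesimal statement back to $U/V$. For $\varphi\in K$ and $u=\exp X$, the element $u^{-1}\varphi(u)=\exp(-X)\exp(\varphi X)$ maps to the identity in $U/N$, because $\varphi X\equiv X\pmod{\lien}$; hence $u^{-1}\varphi(u)\in N\subseteq V$ and $\varphi$ fixes every coset $[u]\in U/V$. Thus $K$ acts trivially on $U/V$, and the assumed faithfulness of the $H$-action on $U/V$ forces $K=\{e\}$, which is precisely faithfulness on $T_V(U/V)$. The main obstacle is the middle step: a priori an element of $K$ may act on $\liev$ with nontrivial eigenvalues, and it is exactly Lemma~\ref{lem: lem9}, together with the passage to the core ideal $\lien$, that rules this out and lets the reductive-plus-unipotent dichotomy collapse $K$ to triviality.
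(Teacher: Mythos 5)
Your proof is correct, and it rests on the same two pillars as the paper's argument: Lemma~\ref{lem: lem9}, applied after passing to the largest ideal $\lien$ of $\lieu$ contained in $\liev$, and the fact that a normal algebraic subgroup of a reductive group is reductive, so that a reductive group consisting of unipotent elements must be trivial. The implementation, however, is genuinely different in the bridging steps. The paper quotients out $N=\exp(\lien)$ at the outset, introduces the filtration $\lieu_i=\gamma_i(\lieu)+\liev$, coordinatizes $U/V$ by a weak Mal'cev basis, and verifies by an explicit computation that any $\phi$ acting nontrivially on some graded quotient $\lieu_i/\lieu_{i+1}$ has nontrivial differential at $eV$; it then shows the kernel of the graded representation $\psi$ of (\ref{mainrep}) is trivial. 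You instead identify the tangent-space representation directly with the induced action on $\lieu/\liev$ (no coordinates, no lower-central-series filtration), run the unipotence argument on the kernel $K$ of that representation using only the two-step filtration $\lien\subseteq\liev\subseteq\lieu$, and close the loop by showing that $K$ acts trivially on $U/N$, hence on $U/V$, at which point the faithfulness hypothesis kills $K$. Your route is shorter and makes explicit where the faithfulness of the $H$-action on $U/V$ is actually used (in the paper it enters only implicitly, to guarantee that $H$ injects into $\Aut(U/N)$ after the initial reduction). What the paper's more computational route buys is the stronger conclusion that the full graded representation (\ref{mainrep}) is faithful, a fact that is reused verbatim in the proof of Theorem~\ref{thm: app3}; your argument as written does not deliver that statement, though it could be adapted to do so.
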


\begin{remark}{\normalfont First of all, we remark that the requirement that $H$ is reductive is necessary.
Let $U$ be the three-dimensional Heisenberg group
\[ \left\{ [x,y,z]:=\left( \begin{array}{ccc} 1 & x & z \\
0 & 1  & y \\ 0 & 0 & 1 \end{array} \right) \;|\; x,y,z \in \R \right\}\]
and let $V$ be the subgroup $\left\{ [x,0,0]\;|\; x \in \R \right\}$.
%Note that
%\[ \left( \begin{array}{ccc} 1 & x & z \\
%0 & 1  & y \\ 0 & 0 & 1 \end{array} \right) =
%\left( \begin{array}{ccc} 1 & 0 & z \\
%0 & 1  & y \\ 0 & 0 & 1 \end{array} \right) \cdot
%\left( \begin{array}{ccc} 1 & x & 0 \\
%0 & 1  & 0 \\ 0 & 0 & 1 \end{array} \right)\]
It follows that we can identify $U/V$ with $\R^2$ by the map
\[ \psi : U/V \rightarrow \R^2:  [x,y,z]V \mapsto (y,z).\]
Now, for all $t\in \R$, $$\phi_t:U\to U: [x,y,z] \mapsto
[x+t y , y,  z+ t y^2/2]$$
is an automorphism of $U$, with $\phi_t(V)=V.$ Each $\phi_t$ induces on $U/V$ a map
\[ \bar{\phi}_t: \R^2 \rightarrow \R^2 : (y,z) \mapsto (y, z + t y^2/2).\]
Note that the differential of $\bar{\phi}_t$ at $(0,0)$ is the identity map and that
$\{\phi_t \; | \; t\in \R\}$ is a unipotent subgroup of $\Aut(U)$
acting faithfully on $U/V=\R^2$.}
\end{remark}

\begin{proof}[Proof of Lemma \ref{lem: lem10}]It is not difficult to see that when $V$ is a normal subgroup of $U$, then the
proposition is trivial to prove. In fact, even more is true. Let us explain
that we can discard the maximal normal subgroup $N$ of $U$ which is contained in $V$.
It is easy to see that $N$ is a connected subgroup of $U$ and that for every
automorphism $\phi\in \Aut(U)$, with $\phi(V)=V$, we also have that $\phi(N)=N$.
%(To see this last fact, note that $\phi(N)$ is also a normal subgroup of $U$ contained in
%$V$).

There is a natural identification of the spaces $U/V$ and $(U/N)/(V/N)$,
under which the action of $H$ on $U/V$ can also be seen as an action of $H$ on $ (U/N)/(V/N)$.
It follows that by replacing $V$ by $V/N$ and $U$ by $U/N$, we can assume that
$V$ does not contain any non-trivial normal subgroup of $U$.

Let us now denote by $\lieu$ the Lie algebra of $U$ and by $\liev$ the Lie algebra of $V$, so
$\liev$ is a subalgebra of $\lieu$. Any automorphism $\phi\in \Aut(U)$, induces an automorphism
$d\phi\in \Aut(\lieu)$ and vice versa. Of course $\phi(V)=V$ if and only if $d\phi(\liev)=\liev$.
The fact that $V$ is a subgroup of $U$ which does not contain a non-trivial normal subgroup of $U$ is equivalent to the
fact that $\liev$ does not contain any non-zero ideal of $\lieu$. This, in turn, is equivalent to the fact that
$\liev\cap Z(\lieu) = 0$.

%Now, without loss of generality, we can assume that $V$ does
%not contain any non-trivial normal subgroup of $U$.

To be able to describe the action of $H$ on $U/V$, we will fix a diffeomorphism
of $U/V$ with a finite dimensional real vector space, using a weak Mal'cev basis of
the Lie algebra $\lieu$ of $U$. We suppose that
$\lieu$ is $c$-step nilpotent and define the subalgebras
$\lieu_i=\gamma_i(\lieu)+ \liev$ forming a descending sequence
\[ \lieu=\lieu_1\supseteq \lieu_2\supseteq \lieu_3 \supseteq \cdots \supseteq \lieu_c \supseteq \liev=\lieu_{c+1}\]
of subalgebras, which are invariant under the
action of $H$.
%(note that we can also view $H$ as being a subgroup of $\Aut(\lieu)$ by looking at the differential of each element).
Moreover, for any $i$, $\lieu_{i+1}$ is an ideal in $\lieu_{i}$,
this will turn out to be crucial in what follows.

As in the proof of the previous lemma, we use $k_i$ to denote the dimension of $\lieu_i/\lieu_{i+1}$ and choose for each $i$ elements
$X_{i,1}, X_{i,2}, \ldots, X_{i,k_i}$ in $\lieu_i$ is such a way that the natural
projections of these elements form a basis of $\lieu_i/\lieu_{i+1}$. Then each element
$u\in U$ can be uniquely written under the form

\begin{equation} \exp(t_{1,1} X_{1,1})\exp(t_{1,2} X_{1,2})\cdots \exp(t_{1,k_1}X_{1,k_1})\exp(t_{2,1} X_{2,1}) \cdots \exp(t_{c,k_c} X_{c,k_c})\label{writingu}
\exp( Y)
\end{equation}
where $t_{i,j}\in \R$ and $Y\in \liev$.
%(This follows from the results in chapter 1 of the book of L.~Corwin and F.P.~Greenleaf on
%weak Mal'cev coordinates. For right cosets, this is included in Theorem 1.2.12 of that book, but it is also true for left cosets.
%E.g.~write the element $u^{-1}$ as in theorem 1.2.12 and then take the inverse!)
The map $$\psi: U/V \rightarrow \R^{k_1+k_2+\cdots+k_c}:\,u\mapsto (t_{1,1}, t_{1,2}, \ldots, t_{1,k_1}, t_{2,1}, \ldots, t_{c,k_c}),$$
where the $t_{i,j}$ are as in (\ref{writingu}), is a diffeomorphism sending the point $eV$ to the origin.

Now, consider an automorphism $\phi\in H$. Assume that
there exists an $i\in \{1,2,\ldots c\}$ such that $\phi$ (or rather $d\phi$) does not induce the identity
on $\lieu_i/\lieu_{i+1}$. This means that there are real numbers
$a_{p,q}$, $1\leq p,q \leq k_i$ so that
$d\phi(X_{i,q}) =  \sum_{p=1}^{k_i} a_{p,q} X_{i,p}$ and the $k_i\times k_i$--matrix $A=(a_{p,q})$ is not the identity matrix.
Using the fact that $\lieu_{i+1}$ is an ideal in $\lieu_i$, we find that
\[ \varphi\left(  \exp(t_{i,1} X_{i,1}) \exp(t_{i,2} X_{i,2}) \cdots \exp(t_{i,k_i} X_{i,k_i})\right) =\]
\[ \exp(\sum_q a_{1,q}t_{i,q} X_{i,1})\exp(\sum_q a_{2,q} t_{i,q} X_{i,2})\cdots
\exp (\sum_q a_{k_i,q} t_{i,q} X_{i,k_i}) \cdot \alpha\]
for some $\alpha \in \exp(\lieu_{i+1})$. This means that for the action of $\phi$ on the space
$U/V=\R^{k_1+\cdots+k_c}$, we have that
\[ \phi\cdot (0,0,\ldots,0,t_{i,1},t_{i,2}, \cdots, t_{i,k_i}, 0,0,\ldots,0)=\]
\[ (0,0,\ldots,0,\sum_q a_{1,q}t_{i,q} ,\sum_q a_{2,q} t_{i,q}, \ldots, \sum_q a_{k_i,q} t_{i,q} , \ast,\ast, \ldots,\ast ).\]
%It follows that the differential of this map at the origin has a matrix representation of the form
%\[ \left( \begin{array}{ccc}
%\ast & \ast & \ast \\
%\ast & A & \ast \\
%\ast & \ast & \ast \end{array}\right) \]
%and hence this differential is non-trivial!  (In fact some of the $\ast$'s in the differential above are zero,
%but this is not important now).
It follows that the differential of this map at the origin is non-trivial.
Thus, if we can show that for any $\phi\in H$ there exists an $i\in \{1,2,\ldots,c\}$ such that
$\phi$ does not induce the identity on $\lieu_i/\lieu_{i+1}$ we are done.
Therefore, we consider the morphism
\begin{equation} \psi: H \rightarrow \GL(\lieu_1/\lieu_2)\times \GL(\lieu_2/\lieu_3) \times \cdots \times \GL(\lieu_c/\lieu_{c+1})\label{mainrep}
\end{equation}
mapping any $\phi$ in $H$ to the $c$-tuple of maps it induces on the consecutive quotients.  Suppose, by a way of contradiction, that $K={\rm Ker}(\psi)$ is non-trivial. Then, as $H$ is a reductive
group, so is $K$. Since $K$ is non-trivial, there must exist an element $\phi\in K$ which is
not unipotent. It follows that also $d\phi$ is not a unipotent element. However, the fact that $\phi\in K$ implies that
$d\phi$ acts trivially on each of the factors $\lieu_i/\lieu_{i+1}$, implying that $d\phi$ induces a unipotent map on $\lieu/\liev$.
 On the other hand, since $d\phi$ is not a unipotent element, this also implies that $d\varphi_{|\liev}$ is not unipotent.
But, now Lemma \ref{lem: lem9} tells us that $\overline{d\phi}$ is not unipotent on $\lieu/\liev$, which is a contradiction. It follows that $K$ is trivial and this finishes the proof.
\end{proof}

In the next lemma, we derive the ``eigenvalue one criteria'' on the induced representation of the reductive subgroup of the given algebraic group
which will be essential in our subsequent arguments.

\begin{lemma}\label{lem: lemp11} {\normalfont Let $\Gamma$ be a Zariski dense subgroup of an algebraic group $G$. Suppose $G$ acts algebraically on an algebraic manifold $X$ such that there exists $x_0\in X$ fixed by the action of a maximal reductive subgroup $H$ of $G$. Let $\rho:H\rightarrow \mbox{GL}(T_{{x}_0}{X})$ be the representation corresponding to the action of $H$ on the tangent space $T_{{x}_0}{X}$ at $x_0$. If $\Gamma$ acts freely  on $X$, then $\rho(h)$ has an eigenvalue equal to one for every $h\in H$.}
\end{lemma}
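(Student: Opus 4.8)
The plan is to reduce the claim to an element-by-element statement about $\Gamma$ and then spread it over all of $H$ by Zariski density. Write $G=UH$ with $U$ the unipotent radical and $H$ the given maximal reductive subgroup, so that $G=U\rtimes H$ and the projection $p\colon G\to H$ is a surjective homomorphism of algebraic groups. Since the $G$-action on $X$ is algebraic and $H$ fixes $x_0$, the tangent representation $\rho\colon H\to\GL(T_{x_0}X)$ is algebraic, and therefore $Z=\{h\in H\mid \det(\rho(h)-\mathrm{Id})=0\}$, the set of elements of $H$ admitting the eigenvalue one, is Zariski closed. Because $\Gamma$ is Zariski dense in $G$, its image $p(\Gamma)=\{h_\gamma\mid\gamma\in\Gamma\}$, where $\gamma=u_\gamma h_\gamma$ with $u_\gamma\in U$ and $h_\gamma\in H$, is Zariski dense in $H$. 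Hence it suffices to prove $h_\gamma\in Z$ for every $\gamma\in\Gamma$; the inclusion $p(\Gamma)\subseteq Z$ then yields $H=\overline{p(\Gamma)}\subseteq Z$, which is the assertion.

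To treat a fixed $\gamma=u_\gamma h_\gamma$ (the case $\gamma=e$ being trivial), I would work on the orbit $Ux_0=Gx_0$, which is $\gamma$-invariant and, by Lemma \ref{lem: lem4}, a closed submanifold isomorphic to $U/V$ with $V=U_{x_0}$ a closed connected subgroup. On this orbit the $G$-action is the one of (\ref{eq: extension2}), so $\gamma$ acts by the map $f\colon U/V\to U/V$, $f([u])=[u_\gamma\, h_\gamma u h_\gamma^{-1}]$. Its linear part is the automorphism $\bar\varphi$ of $\lieu/\liev$ induced by $\varphi=\mathrm{Ad}(h_\gamma)$; concretely, $\lieu/\liev\cong T_{x_0}(Ux_0)$ is a $\rho(H)$-invariant subspace of $T_{x_0}X$ and $\bar\varphi=\rho(h_\gamma)|_{T_{x_0}(Ux_0)}$. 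In particular, an eigenvector of $\bar\varphi$ for the eigenvalue one is an eigenvector of $\rho(h_\gamma)$ for the eigenvalue one, so it is enough to show that $\bar\varphi$ fixes a nonzero vector of $\lieu/\liev$.

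Suppose it does not, i.e. $\bar\varphi-\mathrm{Id}$ is invertible on $\lieu/\liev$. I claim $f$ then has a fixed point, contradicting the freeness of the $\Gamma$-action. This is the nilpotent counterpart of the elementary fact that an affine map of $\R^n$ whose linear part omits the eigenvalue one has a (unique) fixed point. To prove it, I would solve the coset equation $[u_\gamma\,\varphi(u)]=[u]$, equivalently $u^{-1}u_\gamma\,\varphi(u)\in V$, by descending induction along the $\varphi$-invariant filtration $\lieu_i=\gamma_i(\lieu)+\liev$ used in the proof of Lemma \ref{lem: lem10}: on each graded quotient $\lieu_i/\lieu_{i+1}$ the equation becomes linear and its solvability is governed by $\mathrm{Id}-\bar\varphi_i$, which is invertible because, by block-triangularity with respect to the filtration, no eigenvalue of any $\bar\varphi_i$ equals one. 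Assembling these solutions produces the required $u$, hence a fixed point of $f$, and therefore $\bar\varphi$ must fix a nonzero vector and $h_\gamma\in Z$.

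The step I expect to be the main obstacle is exactly this fixed point theorem on the homogeneous space $U/V$: one has to set up Mal'cev-type coordinates adapted to $\liev$, check that the coset equation can indeed be solved one graded level at a time, and verify that invertibility of $\mathrm{Id}-\bar\varphi$ on $\lieu/\liev$ passes to every factor $\lieu_i/\lieu_{i+1}$. Once this is in hand, the remaining ingredients—the algebraicity (hence Zariski-closedness) of $Z$, the Zariski density of $p(\Gamma)$, and the identification of the $\gamma$-action on the orbit with a nil-affine map—are routine consequences of the earlier lemmas, and the proof concludes.
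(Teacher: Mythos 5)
Your argument is correct, but the heart of it is genuinely different from the paper's. The two proofs share the same opening reduction: since $\rho$ is algebraic, the locus $Z=\{h\in H:\det(\rho(h)-\mathrm{Id})=0\}$ is Zariski closed, and since the projection of $\Gamma$ to $H$ is Zariski dense, it suffices to treat $\rho(h_\gamma)$ for $\gamma=u_\gamma h_\gamma\in\Gamma$. From there the paper argues pointwise via the Jordan decomposition $\gamma=\gamma_s\gamma_u$: it conjugates $\gamma_s$ into $H$ by some $u\in U$, observes that $\gamma_s$ fixes the point $x_1=u^{-1}x_0$ and fixes pointwise the curve $t\mapsto\gamma_u^t\gamma_s x_1$, which is non-constant by freeness, so that differentiating along the curve produces a fixed tangent vector for the semisimple part, and then transports the eigenvalue back to $\rho(h_\gamma)$ using the fact that morphisms preserve Jordan decomposition. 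You instead restrict to the orbit $Gx_0\cong U/V$ with the action (\ref{eq: extension2}) and prove a nil-affine fixed-point criterion: if $\overline{\mathrm{Ad}(h_\gamma)}-\mathrm{Id}$ is invertible on $\lieu/\liev$, then $[u]\mapsto[u_\gamma h_\gamma u h_\gamma^{-1}]$ has a fixed point, contradicting freeness. Your sketch of that criterion is sound: the filtration $\lieu_i=\gamma_i(\lieu)+\liev$ has each $\lieu_{i+1}$ an ideal in $\lieu_i$ with abelian quotient, the characteristic polynomial of $\overline{\mathrm{Ad}(h_\gamma)}$ on $\lieu/\liev$ factors over the graded pieces (so invertibility of $\mathrm{Id}-\bar\varphi$ descends to each $\lieu_i/\lieu_{i+1}$), and after solving the affine equation at one level and substituting $u=u_1w$ the residual translation part lands in the next subgroup, so the induction closes; the last step uses that $\liev$ is an ideal of $\lieu_c$ since $\gamma_{c+1}(\lieu)=0$. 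The trade-off: the paper's curve argument is shorter, needs no filtration machinery, and works directly on $X$ without passing to the orbit; your route requires the Mal'cev-coordinate bookkeeping but is more constructive, generalizes the classical affine fixed-point criterion verbatim, and yields the slightly sharper conclusion that the eigenvalue one already occurs on the invariant subspace $T_{x_0}(Gx_0)$. Two small points to make explicit if you write this up: the degenerate case $\lieu=\liev$ (where freeness forces $\Gamma$, hence $G$, to be trivial), and the fact that "no eigenvalue one" is equivalent to "no nonzero real fixed vector" because the eigenvalue in question is real.
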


\begin{proof} The fact that $H$ acts algebraically on $X$ implies that $\rho$ is a morphism of algebraic groups. Let $\psi:G\rightarrow H$ denote the projection of $G$ onto $H$.  Next, we will show that for each $\gamma\in \Gamma$, $\rho\circ \psi(\gamma)$ has an eigenvalue equal to one. Since $\psi(\Gamma)$ is Zariski dense in $H$, this will give us the desired result.

For this purpose, we let $\gamma=\gamma_s\gamma_u$ be the Jordan decomposition of a nontrivial element $\gamma\in \Gamma$, where $\gamma_s$ is semisimple and $\gamma_u$ is unipotent. There exists $u\in U$ such that $u\gamma_s u^{-1}=h_0\in H$.  We let $\gamma_u^t$ be the one-parameter subgroup of $\gamma_u$ and define a curve in $G$ by $\gamma(t)=\gamma^t_u\gamma_s$, $t \in \R$. Since $\Gamma$ acts freely, the curve $\gamma(t)$ does not fix the point ${x}_1=u^{-1}{x}_0$ of ${X}$. Analogously to the representation $\rho$, we can construct a representation $\beta : u^{-1}Hu\rightarrow\mbox{GL}(T_{{x}_1}{X})$. By identifying the tangent spaces with an isomorphism, it follows that $\rho$ and $\beta$ are conjugate by $du:T_{{x}_1}{X}\rightarrow T_{{x}_0}{X}$. Since $\gamma_s$ fixes ${x}_1$, it also fixes the curve $\gamma(t){x}_1$ pointwise. It follows that $\beta(\gamma_s)$ has an eigenvalue equal to one.  Since $h_0=\psi(\gamma_s)$, we also have that $\rho\circ\psi(\gamma_s)$ has  an eigenvalue equal to one. But $\rho\circ\psi(\gamma)=\rho\circ\psi(\gamma_s)\rho\circ\psi(\gamma_u)$ and $\rho\circ\psi$ preserves Jordan decomposition (see 4.4(c) of \cite{Borel}). Hence, $\rho\circ\psi(\gamma)$ has an eigenvalue equal to one.
\end{proof}

We end this section by a lemma on special real representations of semisimple groups. For its proof, we refer to Lemma 5 of \cite{Soifer}.

\begin{lemma}\label{lem: lemp3} {\normalfont Suppose $S$ is a nontrivial connected semisimple Lie group such that the real rank of any simple subgroup of $S$ is at most one. Denote by $X_S$ the symmetric space of $S$. Let  $\rho : S\rightarrow \mbox{GL}_n(\mathbb R)$ be a faithful representation of $S$ such that for every $s\in S$, $\rho(s)$ has an eigenvalue equal to one.  Then $n>\mbox{dim}(X_S)$.}
\end{lemma}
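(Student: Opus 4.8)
The plan is to reduce to a simple group of real rank exactly one and then to isolate the one consequence of the hypothesis that does the work, namely that $V=\R^n$ must carry a zero restricted weight. First I would dispose of the degenerate case: if the real rank of $S$ is $0$ then $S$ is compact, $X_S$ is a point, and $n>0=\dim X_S$ because $\rho$ is faithful and $S$ is nontrivial. So assume the real rank equals one. A product of two noncompact simple factors of rank one would already have real rank at least two, so exactly one simple factor of $S$ is noncompact; the compact factors leave $X_S$ unchanged and, after restricting $\rho$ to the noncompact factor $S_0$ (still faithful on $S_0$ and still satisfying the eigenvalue-one condition), I may assume $S$ is simple of real rank one. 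I then fix a Cartan decomposition $\lieg=\mathfrak k\oplus\mathfrak p$ with maximal abelian $\mathfrak a=\R H\subseteq\mathfrak p$, so that the restricted root system is $\{\pm\alpha\}$ or $\{\pm\alpha,\pm 2\alpha\}$ and $\dim X_S=\dim\mathfrak p=1+m_\alpha+m_{2\alpha}$, where $m_\lambda=\dim\lieg_\lambda$.

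Next I would extract the key consequence of the hypothesis. Choosing a $\theta$-compatible inner product on $V$ makes $\rho_*(\mathfrak p)$ act by self-adjoint operators, so $\rho(\exp tH)$ is diagonalizable with eigenvalues $e^{t\mu}$ as $\mu$ runs over the restricted $\mathfrak a$-weights of $V$. Applying the hypothesis to $a=\exp(tH)$ with $t\neq 0$ forces $e^{t\mu}=1$, hence $\mu=0$, for some weight, so the zero weight space $V_0=\ker\rho_*(H)$ is nonzero. This is precisely the step that fails for the two-dimensional standard representation of $SL_2(\R)$, whose $\mathfrak a$-weights are $\pm 1$: that representation is faithful with $\dim V=2=\dim X_S$ but violates the eigenvalue-one condition. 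This both shows that the hypothesis is essential and indicates that $V_0\neq 0$ is exactly what must upgrade a nonstrict bound to a strict one.

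Finally I would prove the estimate $n>1+m_\alpha+m_{2\alpha}$. The plan is to let $\nu>0$ be the highest restricted weight of $V$ (it exists since $\rho$ is faithful and $\mathfrak a\neq 0$, and $-\nu$ occurs by Weyl symmetry), take a highest weight vector, and apply the lowering operators $\rho_*(\lieg_{-\alpha})$ and $\rho_*(\lieg_{-2\alpha})$ to manufacture independent vectors in the lower weight spaces; counting these together with the mirror negative weights and the extra summand contributed by $V_0$ should push $\dim V$ strictly past $1+m_\alpha+m_{2\alpha}$. I expect the main obstacle to be making this count uniform and strict across Berger's four families $\mathfrak{so}(k,1)$, $\mathfrak{su}(k,1)$, $\mathfrak{sp}(k,1)$, $\mathfrak f_{4(-20)}$: faithfulness alone only yields $n\geq\dim X_S$, a bound attained by the small standard representations, and it is exactly the forced zero weight, available only because the real rank is one, that has to be shown to cost at least one further dimension in every family. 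The confinement to real rank at most one is then unavoidable, since in higher rank representations with the eigenvalue-one property but $n\leq\dim X_S$ do occur.
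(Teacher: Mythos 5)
The paper does not actually prove this lemma: it refers the reader to Lemma~5 of Soifer's paper \cite{Soifer}, so there is no internal argument to compare yours against, and your proposal has to stand on its own. Your reductions are sound: the rank-zero case, the passage to the single noncompact simple factor, and the observation that a $\theta$-compatible inner product converts the eigenvalue-one hypothesis into the statement that $0$ occurs as a restricted $\mathfrak{a}$-weight of $V$ are all correct, and you correctly single out the standard representation of $\mbox{SL}_2(\R)$ as the example the hypothesis must exclude.

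However, the decisive step --- the strict inequality $n>1+m_\alpha+m_{2\alpha}$ for a simple group of real rank one --- is not proved; you explicitly defer it, and the sketched count does not close on its own. Two concrete problems. First, the maps $\mathfrak{g}_{-\alpha}\rightarrow V_{\nu-\alpha}$, $X\mapsto \rho_*(X)v$, for $v$ a highest restricted weight vector, need not be injective: restricted root spaces have multiplicity $m_\alpha>1$ in general and nothing in your argument controls the kernel, so ``manufacturing independent vectors'' with lowering operators is precisely the point at issue rather than a routine step. Second, the premise guiding your plan is false: the bound $n\geq \dim X_S$ is \emph{not} attained by the small standard representations in each family. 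The standard representations of $\mbox{SO}(k,1)$, $\mbox{SU}(k,1)$, $\mbox{Sp}(k,1)$ and the minimal ($26$-dimensional) representation of $F_{4(-20)}$ have real dimensions $k+1$, $2k+2$, $4k+4$ and $26$, all strictly larger than $\dim X_S=k$, $2k$, $4k$, $16$ respectively; a check of minimal dimensions of faithful real representations shows that for every simple rank-one group \emph{not} locally isomorphic to $\mbox{SL}_2(\R)$ the conclusion $n>\dim X_S$ already follows from faithfulness alone, with no use of the eigenvalue hypothesis. That hypothesis is needed only when $S$ is locally $\mbox{SL}_2(\R)$, where the only faithful representation with $n\leq 2=\dim X_S$ is the standard one, and a hyperbolic element with eigenvalues $\lambda,\lambda^{-1}$, $\lambda\neq\pm 1$, violates the eigenvalue-one condition. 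Reorganizing the proof along these lines --- a case check of minimal faithful real representation dimensions for $\mathfrak{so}(k,1)$, $\mathfrak{su}(k,1)$, $\mathfrak{sp}(k,1)$, $\mathfrak{f}_{4(-20)}$, together with the separate $\mathfrak{sl}_2(\R)$ argument --- closes the gap; the uniform weight-counting strategy, as you have stated it, does not.
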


\section{Main Results}

We begin with a generalization of \cite[Theorem A]{Soifer} and \cite[Theorem A]{Tomanov}.

\begin{theorem} \label{thm: main} {\normalfont Let $\Gamma$ be a  subgroup of an algebraic group $G$. Let $G=UH$ where $U$ is the unipotent radical and $H$ is a maximal reductive subgroup. Suppose $G$ acts algebraically on a contractible algebraic manifold $X$ such that there exists $x_0\in X$ with $Hx_0=x_0$. Suppose $\Gamma$ acts crystallographically on $X$.
\begin{enumerate}[(a)]
\item If the real rank of any simple subgroup of $G$ does not exceed one, then $\Gamma$ is virtually polycyclic.
\smallskip
\item If $\Gamma$ is a virtually polycyclic, Zariski dense subgroup of $G$, then $X$ is $G$-equivariantly isomorphic to $U$ with the $G$-action  given by
$$wh\cdot u = whuh^{-1}, \hspace{2mm}\forall u,w\in U, \forall h\in H.$$
\end{enumerate}}
\end{theorem}
\begin{proof}We first prove (a).  Since ${\Gamma}$ is a finitely generated linear group, in view of Selberg's lemma, we can assume that it is torsion-free. Also, without loss of generality, we can assume that  $\Gamma$ is Zariski dense in $G$. Let $R$ be the solvable radical of $G$ and let $S$ be a maximal connected semisimple Lie subgroup of $H$.  We set $\Gamma_1= \Gamma\cap R$ and denote by $G_1$ the algebraic closure of $\Gamma_1$ in $G$. Also, let $\widetilde{G}=G/G_1$, $\widetilde{\Gamma}=\Gamma/\Gamma_1$,  and $\widetilde{X}=X/G_1$.

By lemmas \ref{lem: lem7} and \ref{lem: lem6}, $\Gamma_1$ acts crystallographically on $G_1x_0$ and  $\widetilde{\Gamma}$ acts crystallographically on $\widetilde{X}$. Since $\widetilde{\Gamma}$ is finitely generated linear group, we can assume that it is torsion-free and hence acts freely on $\widetilde{X}$. Let $L$ be the kernel of the action of $G$ on $\widetilde{X}$. Then $L$ is a normal algebraic subgroup containing $G_1$. We define $G'=G/L$ and its subgroup $\Gamma'=\Gamma L/L$.  Since $\widetilde{\Gamma}$ acts freely on $\widetilde{X}$, $\Gamma\cap L=\Gamma\cap G_1$ and thus $\Gamma'\cong \widetilde{\Gamma}$. Therefore, it is enough to show that $\Gamma'$ is virtually polycyclic.

Observe that $\Gamma'$, $G'$ and $\widetilde{X}$ satisfy the hypothesis of the theorem in place of $\Gamma$, $G$ and $X$, respectively. Therefore, proceeding by induction on the dimension of $G$, we can assume that
%$L$ is a finite central subgroup of ${G}$. Since $\Gamma$ is torsion-free, it intersects trivially with $L$. Thus, passing to quotients by $L$, we can assume further that
$L$ is trivial.

Next, let us consider the tangent space $T_{{x}_0}{X}$ of ${X}$ at ${x}_0$ and the action of $S$ on $T_{{x}_0}{X}$.  We denote by $\rho:S\rightarrow \mbox{GL}(T_{{x}_0}{X})$ the representation corresponding to this action. Since $S$ acts faithfully on ${X}$,  Lemma \ref{lem: lem6}(b) together with Lemma \ref{lem: lem10} show that $\rho$ is a faithful representation.  Since $\Gamma$ intersects trivially with the solvable radical of $G$, by Lemma \ref{lem: lem8}, a finite index subgroup of $\Gamma$ acts properly discontinuously on the symmetric space  $X_{G^0/R}$.  This implies  $\mbox{cd}(\Gamma)\leq \mbox{dim}(X_{G^0/R})\leq \mbox{dim}(X_{S})$. Also, since $\Gamma$ acts crystallographically on ${X}$, $\mbox{cd}(\Gamma)= \mbox{dim}({X})$. Hence, we have
$\mbox{dim}(\rho)=\mbox{dim}(T_{{x}_0}{X})=\mbox{dim}({X})=\mbox{cd}(\Gamma)\leq \mbox{dim}(X_{S})$.

On the other hand, by Lemma \ref{lem: lemp11}, it follows that for every $s\in S$, $\rho(s)$ has an eigenvalue equal to one. But, according to Lemma \ref{lem: lemp3}, this is only possible when  $S$ is trivial. We deduce that $\Gamma$ is virtually polycyclic.
%Thus, $\mbox{cd}(\Gamma')=0$ and $\Gamma'$ is trivial.

If $\Gamma$ is a virtually polycyclic, Zariski dense subgroup of $G$, then the identity component of $G$ is solvable.  Assertion (b) now follows directly from Lemma \ref{lem: lem6}.
\end{proof}

\begin{remark}\label{rmk: aftermain}{\normalfont Assuming the hypothesis of part (b) of the theorem, let us consider the resulting action of the group $G$. Note that the conjugation action gives rise to a representation $\phi:H\rightarrow \mbox{Aut}(U)$. Let $H'$ be its image and denote by $G'$ the semidirect product $U\rtimes H'$. Then there is an epimorphism of algebraic groups $\psi:G\rightarrow G'$ given by $uh\mapsto u\phi(h)$ for each $u\in U$ and $h\in H$.  Now, the action of $G$ on $X$ factors through $G'$ and has kernel equal to $\mbox{ker}(\phi)$. Because $\Gamma$ is a crystallographic subgroup of $G$ and $X\cong U$, its image under $\psi$ is a NIL-affine crystallographic subgroup of $G'$ and $\Gamma \cap \mbox{ker}(\phi)$ is finite. This shows that the action of $\Gamma$ on $X$ is NIL-affine crystallographic.}
\end{remark}

As we mentioned earlier, every virtually polycyclic group admits a NIL-affine crystallographic action. This was proven independently by Dekimpe in \cite{dek} and Baues  in \cite{baues}. Next, we present a new proof of this result which  is a constructive application of Theorem \ref{thm: main}(b).

\begin{theorem} \label{thm: action} {\normalfont Let $\Gamma$ be a  virtually polycyclic group. Then it admits a NIL-affine crystallographic action.}
\end{theorem}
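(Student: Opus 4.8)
The plan is to realize $\Gamma$ as a Zariski dense subgroup of a solvable-by-finite algebraic group $G=U\rtimes H$, with $U$ the unipotent radical and $H$ a maximal reductive subgroup, in such a way that $G$ acts algebraically on $X=U$ by the NIL-affine action $uh\cdot w=u(hwh^{-1})$ (so that $H$ fixes $x_0=e$) and the restriction of this action to $\Gamma$ is crystallographic. Once this is arranged, Theorem \ref{thm: main}(b) applies and Remark \ref{rmk: aftermain} identifies the resulting action as NIL-affine crystallographic, which is exactly the desired conclusion. By Selberg's lemma I would first pass to a torsion-free polycyclic \emph{normal} subgroup $\Gamma_0$ of finite index (take the normal core of a torsion-free finite-index subgroup), carry out the construction for $\Gamma_0$, and recover $\Gamma$ at the end by a finite extension argument.

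To build $G$ I would start from the real algebraic (Mostow) hull of $\Gamma_0$. The essential point is that the NIL-affine action must take place on a nilpotent group $U$ of dimension exactly $h(\Gamma_0)$, the Hirsch length: since $\Gamma_0$ acts freely and cocompactly and $\mbox{cd}(\Gamma_0)=h(\Gamma_0)$, Lemma \ref{lem: lem3} forces $\dim X=h(\Gamma_0)$. The unipotent radical of the naive algebraic closure is in general too small (for a $\mbox{Sol}$-group, say, the monodromy is semisimple, so a plain affine action on its two-dimensional unipotent radical is not properly discontinuous). The remedy is to pass to the \emph{nilshadow} of the solvable hull: one absorbs the split part of the torus into a larger simply connected nilpotent group $U$ with $\dim U=h(\Gamma_0)$, on which the remaining reductive part $H$ acts faithfully by automorphisms with $C_G(U)\subseteq U$. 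This produces $G=U\rtimes H$ together with its algebraic NIL-affine action on the contractible algebraic manifold $U\cong\R^{h(\Gamma_0)}$ fixing $x_0=e$, and with $\Gamma_0$ Zariski dense in $G$.

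The heart of the argument is to verify that $\Gamma_0$ acts properly discontinuously and cocompactly on $U$. Here I would produce a connected, closed, solvable Lie subgroup $L\leq G$ containing $\Gamma_0$ as a cocompact lattice with $\dim L=h(\Gamma_0)=\dim U$, using one-parameter subgroups through a polycyclic generating set exactly as in the construction of the subgroup $A$ inside the proof of Lemma \ref{lem: lem5.5}. One then checks, using $C_G(U)\subseteq U$ together with the equality of dimensions, that $L\cap H$ is trivial, so that the orbit map $L\to U,\ l\mapsto l\cdot e$, is a local diffeomorphism between manifolds of the same dimension; since $U$ is simply connected this forces $L$ to act simply transitively. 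Transporting the $\Gamma_0$-action across the resulting $L$-equivariant identification $U\cong L$ turns it into left translation by the discrete cocompact subgroup $\Gamma_0\leq L$, which is manifestly properly discontinuous and cocompact. I expect this step, namely matching the dimension via the nilshadow and simultaneously extracting the simply transitive Lie subgroup $L$, to be the main obstacle, as it is precisely where the fine structure theory of the algebraic hull is used.

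Finally, with $\Gamma_0$ virtually polycyclic, Zariski dense in $G=U\rtimes H$, and acting crystallographically on $X=U$ with $Hx_0=x_0$, Theorem \ref{thm: main}(b) and Remark \ref{rmk: aftermain} yield a NIL-affine crystallographic action of $\Gamma_0$. To pass from $\Gamma_0$ to $\Gamma$, I would exploit that $\Gamma_0\trianglelefteq\Gamma$ is normal of finite index: conjugation extends the embedding to $\Gamma\hookrightarrow\mbox{Aff}(U)$, enlarging $H$ only by the finite components coming from $\Gamma/\Gamma_0$ (which keeps $H$ reductive and still fixing $e$), while both proper discontinuity and cocompactness are inherited through the finite index. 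Hence $\Gamma$ itself admits a NIL-affine crystallographic action.
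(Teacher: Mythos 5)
Your route is genuinely different from the paper's. What you outline --- the algebraic hull of $\Gamma_0$, a nilshadow $U$ of dimension $h(\Gamma_0)$ with $C_G(U)\subseteq U$, and a connected solvable $L$ containing $\Gamma_0$ cocompactly and acting simply transitively on $U$ --- is essentially the original argument of \cite{dek} and \cite{baues}, which the paper explicitly sets aside in favour of ``yet another proof''. The paper avoids the nilshadow altogether: it sets $G_1=[G^0,G^0]$ (so $\Gamma_1=\Gamma\cap G_1$ is a lattice in a unipotent group), invokes Zassenhaus to obtain a Euclidean crystallographic action of the virtually abelian quotient $\Lambda=\Gamma/\Gamma_1$ on some $\R^n$, embeds $\Gamma$ diagonally into $\overline{G}=G\times\mbox{E}(n)$ acting on $U\times\R^n$, checks proper discontinuity of this product action directly, and then restricts to the orbit $Y$ of the algebraic closure $\overline{\Gamma}$ through a point fixed by a maximal reductive subgroup; Lemma \ref{lem: lem6.5} gives cocompactness on $Y$, and Theorem \ref{thm: main}(b) with Remark \ref{rmk: aftermain} converts this into a NIL-affine crystallographic action. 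The payoff of the paper's approach is that the hard structure theory (matching $\dim U$ to the Hirsch length, simple transitivity of $L$) is replaced by the soft rigidity statement of Theorem \ref{thm: main}(b); your approach must establish that structure theory from scratch, and you correctly flag it as the main obstacle but leave it as a sketch.

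Two concrete gaps remain in your version. First, the core step --- that the nilshadow has dimension exactly $h(\Gamma_0)$, that $L$ exists with $L\cap H$ trivial, and that the resulting open orbit of the connected group $L$ is all of $U$ --- is asserted rather than proved; a local diffeomorphism between manifolds of equal dimension together with simple connectivity of $U$ does not by itself give surjectivity or properness of the orbit map (the orbit could a priori be a proper open subset), so a closedness or completeness argument is still owed. Second, and more seriously, the final passage from $\Gamma_0$ back to $\Gamma$ does not work as stated: conjugation gives an action of $\Gamma$ on $\Gamma_0$, hence by Mal'cev-type rigidity on $L$, but there is no reason these automorphisms of $L$ are realized inside $\mbox{Aff}(U)$ compatibly with the chosen embedding, and even if they were one would still have to realize the extension $1\to\Gamma_0\to\Gamma\to F\to 1$ inside $\mbox{Aff}(U)$, which requires a cohomological vanishing argument or running the hull construction for $\Gamma$ itself after killing its maximal finite normal subgroup. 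Proper discontinuity and cocompactness are inherited only once the action of all of $\Gamma$ has been constructed; constructing it is precisely the missing content, and the paper sidesteps this by working with $\Gamma$ from the start.
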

\begin{proof}It is well-known that any virtually polycyclic group is linear. Therefore, we can assume $\Gamma$ is a Zariski dense subgroup of an algebraic group $G$. Since $\Gamma$ is virtually solvable, $G^0$ is a solvable normal subgroup of $G$. Let $G_1=[G^0,G^0]$ and $\Gamma_1=\Gamma\cap G_1$. Then $G_1$ is a normal unipotent subgroup of $G$ and $\Gamma_1$ is Zariski dense in $G_1$ and hence it is a cocompact lattice. Let $\Lambda=\Gamma/\Gamma_1$. Then $\Lambda$ is a virtually abelian group. We denote by $\mbox{E}(n)$ the image of the standard embedding of the group of isometries of $\R^n$ into $\mbox{GL}_{n+1}(\R)$. By a theorem of Zassenhaus \cite{Zass}, we know that $\Lambda$ admits an Euclidean crystallographic action on some $\R^n$, i.e. there exists a representation $\rho:\Lambda\rightarrow \mbox{E}(n)$ with finite kernel such that $\rho(\Lambda)$ is crystallographic. Let $\overline{G}= G\times \mbox{E}(n)$ and note that we can  embed $\Gamma$ into $\overline{G}$ by the homomorphism $\gamma\mapsto (\gamma, \rho([\gamma]))$, $\forall\gamma \in \Gamma$. We identify $\Gamma$ with its image in $\overline{G}$.

Let $U$ be the unipotent radical of $G$. Next, we construct a transitive action of $\overline{G}$ on $X=U\times \R^n$ by
%for $w \in U$, $h\in H$, $x, y \in \R^n$, $\alpha\in \mbox{O}(n)$,
$$g\cdot (u, y)= (whuh^{-1}, x+\alpha y) \hspace{5mm} \forall g=(wh,(x, \alpha))\in \overline{G}, \forall(u, y)\in X.$$
This is clearly an algebraic action and it is not difficult to see that a maximal reductive subgroup of $\overline{G}$ fixes a point.
We claim that the restriction of the action to $\Gamma$ is properly discontinuous.

Suppose this is not the case. Then there exists  a compact subset $K=K_1\times K_2$ in $X$, where $K_1\subseteq U$ and $K_2\subseteq \R^n$ such that the intersection of the sets
\begin{align*}
\mathscr{G}=&\{\gamma\in \Gamma \;|\; \gamma K_1\cap K_1\ne \emptyset\},\\
\mathscr{H}=&\{\gamma\in \Gamma \;|\; [\gamma] K_2\cap K_2\ne \emptyset\}\\
\intertext{is infinite. Now, since $\Lambda$ acts properly discontinuously on $\R^n$, this implies that there is an
element $\gamma_0\in \Gamma$, for which}
\mathscr{I}=&\{\gamma \in \mathscr{G}\cap \mathscr{H} \;|\; [\gamma]=[\gamma_0] \}\\
\intertext{is infinite. Hence, also the set}
&\{\gamma_0^{-1} \gamma \in \Gamma_1  \;|\; \gamma_0^{-1}\gamma K_1\cap \gamma_0^{-1}K_1\ne \emptyset\},
\end{align*}
containing $\gamma_0^{-1}\mathscr{I}$,
is infinite. This is a contradiction to the fact that $\Gamma_1$ is a discrete subgroup of $U$.

Next, let  $\overline{\Gamma}$ be the algebraic closure of $\Gamma$ in $\overline{G}$ and denote by $H'$ a maximal reductive subgroup of $\overline{\Gamma}$. Then $H'$ is a reductive subgroup of $\overline{G}$ and hence it is contained in a maximal reductive subgroup $H$ of $\overline{G}$. Let $x_0$ be a point fixed by $H$ and consider the orbit $Y$ at $x_0$ of the action of $\overline{\Gamma}$ on $X$. It is clearly a contractible algebraic submanifold of $X$. Since $\overline{\Gamma}^0$ is solvable and $Y=\overline{\Gamma}^0x_0$, by Lemma \ref{lem: lem6.5}, it follows that $\Gamma$ acts crystallographically on $Y$. Therefore, Theorem \ref{thm: main} and Remark \ref{rmk: aftermain} show that $\Gamma$ admits a NIL-affine crystallographic action on the unipotent radical of $\overline{\Gamma}$ which is isomorphic to $Y$.
\end{proof}
Now, we consider algebraic crystallographic actions in the case where the ambient space $X$ is the 1-connected nilpotent Lie group $N$ and the algebraic group is the group affine motions of  $N$. Recently, Abels, Margulis, and Soifer have shown that Auslander's conjecture holds up to dimension six (see \cite{AMS}). Building on this result, we prove that the generalized Auslander conjecture for NIL-affine actions also holds up to dimension six. But first, we need a lemma.

\begin{lemma}\label{lem: prop} {\normalfont Let $N$ be a 1-connected nilpotent Lie group and suppose $H$ is a maximal reductive subgroup of affine transformations $\mbox{Aff}(N)$. Then there exists $x_0\in N$ such that $Hx_0=x_0$.}
\end{lemma}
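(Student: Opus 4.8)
The plan is to realize $\mbox{Aff}(N)=N\rtimes \Aut(N)$ as a real algebraic group and then to locate one maximal reductive subgroup that visibly fixes a point, transporting this to an arbitrary $H$ by conjugacy. First I would note that since $N$ is $1$-connected nilpotent, as an algebraic group it is unipotent (via $\exp:\lien\to N$), and $\Aut(N)\cong\Aut(\lien)$ is a real algebraic group; hence $\mbox{Aff}(N)$ is a real algebraic group. The key elementary observation is that the stabilizer of the identity $e\in N$ is exactly $\Aut(N)$: for $(m,h)\in\mbox{Aff}(N)$ we have $(m,h)\cdot e = m\,h(e)=m$, so $(m,h)$ fixes $e$ if and only if $m=e$.

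Next I would identify the unipotent radical of $\mbox{Aff}(N)$. Writing a Levi decomposition $\Aut(N)=U_A\rtimes L$, with $U_A$ the unipotent radical and $L$ a maximal reductive subgroup, gives $\mbox{Aff}(N)=(N\rtimes U_A)\rtimes L$. Since $N$ and $U_A$ are both unipotent and $N$ is normal, the subgroup $U:=N\rtimes U_A$ is unipotent and normal, so it is the unipotent radical of $\mbox{Aff}(N)$ and $L$ is a maximal reductive subgroup of $\mbox{Aff}(N)$. Crucially, $L\leq\Aut(N)$ already fixes $e$ by the previous paragraph.

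To handle an arbitrary maximal reductive subgroup $H$, I would invoke Mostow's conjugacy theorem: over a field of characteristic zero any two maximal reductive (Levi) subgroups of a linear algebraic group are conjugate by an element of the unipotent radical. Thus $H=uLu^{-1}$ for some $u\in U$. Writing $u=(m,\phi)$ with $m\in N$ and $\phi\in U_A\leq\Aut(N)$, a one-line computation gives $x_0:=u\cdot e = m\,\phi(e)=m\in N$. Since $L$ fixes $e$, for every $h\in L$ we have $(uhu^{-1})\cdot x_0 = u\cdot(h\cdot e)=u\cdot e = x_0$, so $H=uLu^{-1}$ fixes the point $x_0\in N$, as required.

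The main obstacle, and really the only nontrivial input, is the clean application of Mostow's theorem in the real algebraic category: one must verify that the conjugating element can be taken in the real points of the unipotent radical, so that $x_0=u\cdot e$ is an honest point of $N$ rather than a point over an extension. An alternative, purely cohomological route would read the translational part of $H$ as a $1$-cocycle valued in $N$ and use the vanishing of the relevant algebraic $H^1$ (forced by the reductivity of $H$) to write it as a coboundary; this amounts to exactly the same conjugation into $\Aut(N)$, but I expect the Mostow route to be the most economical.
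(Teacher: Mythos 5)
Your proof is correct and follows essentially the same route as the paper: both identify $\Aut(N)$ as the stabilizer of $e$, observe that a maximal reductive subgroup of $\Aut(N)$ is also one of $\mbox{Aff}(N)$ because the kernel $N$ of the projection $\mbox{Aff}(N)\to\Aut(N)$ is unipotent, and then use conjugacy of maximal reductive subgroups to move $H$ there. Your version merely spells out the unipotent radical $N\rtimes U_A$ and the explicit fixed point $x_0=u\cdot e$, which the paper leaves implicit.
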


\begin{proof}Let $U$ be the unipotent radical of $\mbox{Aff}(N)$. Consider the natural epimorphism of algebraic groups $\pi:\mbox{Aff}(N)\rightarrow \mbox{Aut}(N)$. Since $\mbox{ker}(\pi)=N$ is unipotent, $\pi_{|H}:H\rightarrow \mbox{Aut}(N)$ is injective. As any two maximal reductive subgroups are conjugate, we can assume $H\leq \mbox{Aut}(N)$. Hence, $He=e$ finishing the claim.
\end{proof}

The lemma together with Theorem \ref{thm: main} give us the following.

\begin{theorem} \label{thm: app1} {\normalfont Let $\Gamma$ be a NIL-affine crystallographic group of a 1-connected nilpotent Lie group $N$. If the real rank of any simple subgroup of the algebraic closure of $\Gamma$ does not exceed one, then $\Gamma$ is virtually polycyclic and  $N$ is isomorphic to the unipotent radical of the algebraic closure of $\Gamma$.}
\end{theorem}

\begin{theorem} \label{thm: app2} {\normalfont Suppose $\Gamma$ is a NIL-affine crystallographic group of a 1-connected nilpotent Lie group $N$ of dimension at most six. Then $\Gamma$ is virtually polycyclic and  $N$ is isomorphic to the unipotent radical of the algebraic closure of $\Gamma$.}
\end{theorem}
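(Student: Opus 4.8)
The plan is to use Theorem~\ref{thm: app1} to reduce the statement to a bound on the semisimple rank, and then to feed the low-dimensional input of Abels--Margulis--Soifer into the reduction scheme of Theorem~\ref{thm: main}(a). Let $G$ be the algebraic closure of $\Gamma$ in $\mbox{Aff}(N)$, with unipotent radical $U$, maximal reductive subgroup $H$, and semisimple Levi factor $S\leq H$. First I would check that $G$, $\Gamma$ and $X=N$ fall under the hypotheses of Theorems~\ref{thm: main} and \ref{thm: app1}: since $H$ is reductive it is contained in a maximal reductive subgroup of $\mbox{Aff}(N)$, and by Lemma~\ref{lem: prop} together with the conjugacy of maximal reductive subgroups I may, after an inner automorphism of $\mbox{Aff}(N)$, assume $H\leq \Aut(N)$, so that $H$ fixes the identity $x_0=e$. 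Thus all standing hypotheses hold, with $\mbox{dim}(X)=\mbox{dim}(N)\leq 6$. By Theorem~\ref{thm: app1} it then suffices to prove that the semisimple rank of $G$, i.e.\ the real rank of $S$, is at most one; the conclusions that $\Gamma$ is virtually polycyclic and that $N$ is the unipotent radical of $G$ follow at once.

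Assume, for contradiction, that $S$ has real rank at least two. I would run verbatim the reductions at the start of the proof of Theorem~\ref{thm: main}(a): pass to a torsion-free finite-index subgroup, assume $\Gamma$ Zariski dense in the connected group $G$, arrange $\Gamma\cap R=\{e\}$ for the radical $R$, and divide out the kernel of the $G$-action, throughout which the dimension of the contractible manifold can only decrease. This yields a faithful representation $\rho:S\to\GL(T_{x_0}X)=\GL_d(\R)$ with $d=\mbox{dim}(X)\leq 6$, faithfulness coming from Lemma~\ref{lem: lem6}(b) and Lemma~\ref{lem: lem10}, in which every $\rho(s)$ has an eigenvalue equal to one by Lemma~\ref{lem: lemp11}. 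Moreover, by Lemma~\ref{lem: lem8}, $\Gamma$ embeds as a Zariski-dense discrete subgroup of $S$ acting properly discontinuously on the symmetric space $X_S$, so that $d=\mbox{cd}(\Gamma)\leq \mbox{dim}(X_S)$.

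The three numerical constraints now in force---$d\leq 6$, $d\leq\mbox{dim}(X_S)$, and the existence of a faithful representation of the real-rank-$\geq 2$ group $S$ all of whose elements have eigenvalue one---severely restrict $S$. (Lemma~\ref{lem: lemp3} is exactly what disposes of real rank $\leq 1$, which is why real rank two is the decisive range.) A short inspection of the small faithful representations of semisimple groups of real rank at least two shows that the obvious candidates either fail the eigenvalue-one condition (e.g.\ the standard representations of $\mathrm{SL}_3(\R)$ or $\mathrm{Sp}_4(\R)$, and $\R^2\otimes\R^2$ for $\mathrm{SL}_2\times\mathrm{SL}_2$) or violate $d\leq\mbox{dim}(X_S)$; the borderline survivor in dimension at most six is the $5$-dimensional standard orthogonal representation of $\mathrm{SO}(3,2)^0$, for which eigenvalue one is automatic (odd-dimensional orthogonal) and $d=5\leq 6=\mbox{dim}(X_S)$. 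It is precisely here that I would invoke \cite{AMS}: the representation $\rho$ places $\rho(\Gamma)$, which is Zariski dense in $S$, inside $\mbox{Aff}(\R^d)$ with $d\leq 6$, and the crystallographic data for $\Gamma$ should produce an affine crystallographic group of dimension at most six whose linear part is Zariski dense in a semisimple group of real rank at least two. Since such a group is virtually polycyclic by \cite{AMS}, while a Zariski-dense subgroup of a nontrivial semisimple group is never virtually solvable, this is a contradiction; hence $S$ has real rank at most one and Theorem~\ref{thm: app1} completes the argument.

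I expect the main obstacle to be exactly this last step: producing, from the a priori only polynomial NIL-affine action of $\Gamma$ on $X\cong U/V$, an honest properly discontinuous and \emph{cocompact} affine action on $\R^d$ to which the dimension-six Auslander theorem of \cite{AMS} applies. Controlling cocompactness under linearization, and ensuring that the Zariski closure of the linear part is genuinely the full real-rank-$\geq 2$ group $S$ rather than a proper subgroup, is the delicate point; should a clean passage to an affine model be unavailable, the fallback is to quote the classification of admissible reductive linear parts underlying \cite{AMS} directly and to verify by hand that no real-rank-$\geq 2$ case survives the constraints $d\leq 6$ and $d\leq\mbox{dim}(X_S)$.
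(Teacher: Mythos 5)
Your reduction to ``semisimple rank of the algebraic closure $\leq 1$ plus Theorem~\ref{thm: app1}'' is sound as far as it goes, and you have correctly located the one case that survives all the soft constraints: a Levi factor locally isomorphic to $\mathrm{Sp}_4(\R)$ acting through the $5$-dimensional orthogonal representation of $\mathrm{SO}(3,2)^0$, where every element automatically has eigenvalue one and $d=5\leq 6=\dim(X_S)$. But this is exactly where your argument stops being a proof. The step you defer --- converting the crystallographic action of $\Gamma$ on $\widetilde{X}\cong U/V$ (with $U$ a possibly nonabelian unipotent group and the twisted action~(\ref{eq: extension2})) into an honest affine crystallographic action on $\R^d$ so that \cite{AMS} applies --- is not a technicality; there is no general mechanism in the paper, or elsewhere, for linearizing such an action while preserving proper discontinuity and cocompactness, and your fallback (``quote the classification of admissible reductive linear parts underlying \cite{AMS}'') is not available either, since \cite{AMS} is a statement about subgroups of $\mathrm{Aff}(\R^n)$, not about actions on homogeneous spaces of unipotent groups. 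So the $\mathrm{SO}(3,2)$ case is left open, and with it the theorem.

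The paper closes this gap by an entirely different, and essentially combinatorial, route: it observes that the only situations in which an affine model genuinely exists are $N=\R^6$ (where the action already is affine and \cite{AMS} applies directly) and $N$ $2$-step nilpotent (where Proposition~3 of \cite{BDD} supplies an affine crystallographic action on $\R^6$), and then runs through the classification of the $33$ nonabelian nilpotent Lie algebras of dimension six. For $30$ of them $\lien$ is $2$-step or $\mathrm{Der}(\lien)$ is solvable, so $\mbox{Aff}(N)$ is virtually solvable; for the remaining three ($\lieg_{6,12}$, $\lieg_{5,4}\oplus\R$, $\lieg_{4,1}\oplus\R^2$) an explicit computation of $\Aut(\lien)$ exhibits an epimorphism onto $\GL_2(\R)\times(\R^{*})^k$ with unipotent kernel, so the semisimple rank is one and Theorem~\ref{thm: app1} finishes. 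In other words, the paper shows that a rank-$\geq 2$ Levi factor simply never occurs for nonabelian, non-$2$-step $N$ of dimension six, which is why it never has to confront the $\mathrm{SO}(3,2)$ scenario that your representation-theoretic sieve cannot eliminate. If you want to salvage your approach, you would need either that classification input or a genuinely new argument ruling out cocompact proper actions of Zariski-dense subgroups of $U\rtimes \mathrm{SO}(3,2)^0$ on five- and six-dimensional quotients $U/V$; neither is supplied by your proposal.
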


\begin{proof} In \cite{BDD}, this result has been proven when $\mbox{dim}(N)\leq 5$. In the abelian case,  $N=\mathbb R^6$, Auslander's conjecture has been settled (see \cite{AMS}). Also, if $N$ is 2-step nilpotent, then by Proposition 3 of \cite{BDD}, $\Gamma$ admits an affine crystallographic action on $\mathbb R^6$. So, by the abelian case, we can again conclude that $\Gamma$ is virtually polycyclic.

Now,  we observe that in dimension six, there are 33 non-isomorphic, non-abelian, nilpotent real Lie algebras  (see \cite{Magnin}, \cite{OV}). For 30 of them, the Lie algebra $\lien$ corresponding to $N$ is either 2-step nilpotent or has solvable derivation algebra $\mbox{Der}(\lien)$. When $\mbox{Der}(\lien)$ is solvable, since it is the Lie algebra of $\mbox{Aut}(\lien)$, it directly follows that $\mbox{Aff}(N)$ is virtually solvable. Therefore, in these 30 cases, we can deduce that $\Gamma$ is virtually polycyclic.

Using Theorem \ref{thm: app1}, we will show that in the remaining three cases, $\Gamma$ is also virtually polycyclic. (To describe the three cases, we use the notation of
\cite{OV}).

\medskip

\noindent\underline{{\bf Case 1:} $ \lieg_{6,12}$: $[X_1,X_2]=X_5$, $[X_1,X_5]=X_6$, $[X_3,X_4]=X_6$.}\\[2mm]
By writing the elements of the Lie algebra as column vectors using basis $\{X_6, \dots, X_1\}$, we find

$$\mbox{Aut}(\lieg_{6,12})=\displaystyle{\left\{\left( \begin{array}{llllll} \alpha^2_1\beta_2 & \alpha_1\varepsilon_2 & \lambda_4 & \lambda_3 & \lambda_2 & \lambda_1
\\ 0 & \alpha_1\beta_2 & \varepsilon_4 &  \varepsilon_3 & \varepsilon_2 &  \varepsilon_1\\ 0 & 0 & \partial_4 & \partial_3 & 0 &
\partial_1\\ 0 & 0 & \gamma_4  & \gamma_3 & 0 & \gamma_1\\ 0 & 0 & 0 & 0 & \beta_2 &  \beta_1
\\ 0 & 0 & 0 & 0 & 0 &  \alpha_1\end{array}\right)\right\}}<  \mbox{GL}_6(\mathbb R),$$ where $\partial_1=
\displaystyle\frac{\partial_4\varepsilon_3-\partial_3\varepsilon_4}{\alpha_1\beta_2},$ $\gamma_1=\displaystyle\frac{\gamma_4\varepsilon_3-\gamma_3\varepsilon_4}{\alpha_1\beta_2}$, $\alpha_1\beta_2\ne 0$,
 and $\partial_4\gamma_3-\partial_3\gamma_4=\alpha^2_1\beta_2$.

%$\alpha_i, \beta_i, \gamma_i, \lambda_i, \partial_i,\theta_i, \omega_i, \varepsilon_i \in \mathbb R,$ for $i=1,2,3,4$.

\noindent Let $\varphi_1: \mbox{Aut}(\lieg_{6,12})\rightarrow \mbox{GL}_6({\mathbb R})$ be the morphism defined by
$$\displaystyle{\left(\begin{array}{llllll} \alpha^2_1\beta_2 & \alpha_1\varepsilon_2 & \lambda_4 & \lambda_3 & \lambda_2 & \lambda_1
\\ 0 & \alpha_1\beta_2 & \varepsilon_4 &  \varepsilon_3 & \varepsilon_2 &  \varepsilon_1\\ 0 & 0 & \partial_4 & \partial_3 & 0 &
\partial_1\\ 0 & 0 & \gamma_4  & \gamma_3 & 0 & \gamma_1\\ 0 & 0 & 0 & 0 & \beta_2 &  \beta_1
\\ 0 & 0 & 0 & 0 & 0 &  \alpha_1\end{array}\right)\mapsto \left(\begin{array}{llllll} \alpha^2_1\beta_2 & 0 & 0 & 0 & 0 & 0
\\ 0 & \alpha_1\beta_2 & 0 &  0 & 0 &  0\\ 0 & 0 & \partial_4 & \partial_3 & 0 &
0\\ 0 & 0 & \gamma_4  & \gamma_3 & 0 & 0\\ 0 & 0 & 0 & 0 & \beta_2 & 0
\\ 0 & 0 & 0 & 0 & 0 &  \alpha_1\end{array}\right)}.$$

\noindent The kernel of $\varphi_1$ is unipotent. We note that $\mbox{Im}(\varphi_1)\cong \mbox{GL}_2(\mathbb R)\times \R^{*}\times \R^{*}$ and
hence has semisimple rank one. Therefore, by Theorem \ref{thm: app1}, $\Gamma$ is virtually polycyclic.

\medskip

\noindent\underline{{\bf Case 2:} $ \lieg_{5,4}\oplus \mathbb R$: $[X_1,X_2]=X_3$, $[X_1,X_3]=X_4$, $[X_2,X_3]=X_5$.}\\[2mm]
By writing the elements  as column vectors using basis $\{X_5, X_4, X_3, X_6, X_2, X_1\}$, we find

$$\mbox{Aut}(\lieg_{5,4}\oplus \mathbb R)=\displaystyle{\left\{\left( \begin{array}{llllll} \beta_2x & \beta_1x & \varepsilon_3 & \varepsilon_6 & \varepsilon_2 &  \varepsilon_1
\\ \alpha_2x & \alpha_1x & \partial_3 &  \partial_6 & \partial_2 &  \partial_1\\ 0 & 0 & x & 0 & \gamma_2 &
\gamma_1\\ 0 & 0 & 0  & \lambda_6 & \lambda_2 & \lambda_1\\ 0 & 0 & 0 & 0 & \beta_2 &  \beta_1
\\ 0 & 0 & 0 & 0 & \alpha_2 &  \alpha_1\end{array}\right)\right\}}<  \mbox{GL}_6(\mathbb R),$$ where $\varepsilon_3=\beta_1\gamma_2-\beta_2\gamma_1,$ $\partial_3=\alpha_1\gamma_2-\alpha_2\gamma_1$, $\lambda_6\ne 0$,
 and $x=\beta_2\alpha_1-\beta_1\alpha_2\ne 0$. Just as in the previous case, there exists an epimorphism of $\mbox{Aut}(\lieg_{5,4}\oplus \mathbb R)$ onto  $\mbox{GL}_2(\mathbb R)\times \R^{*}$ and the kernel is unipotent. Thus, $\Gamma$ is virtually polycyclic.

\medskip

\noindent\underline{{\bf Case 3:} $ \lieg_{4,1}\oplus \mathbb R^2$: $[X_1,X_2]=X_3$, $[X_1,X_3]=X_4$.}\\[2mm]
By writing the elements as column vectors using basis $\{X_4, X_3, X_6, X_5, X_2, X_1\}$, we have

$$\mbox{Aut}(\lieg_{4,1}\oplus \mathbb R^2)=\displaystyle{\left\{\left( \begin{array}{llllll} \alpha^2_1\beta_2 & \alpha_1\gamma_2 & \partial_6 & \partial_5 & \partial_2 & \partial_1
\\ 0 & \alpha_1\beta_2 & 0 &  0 & \gamma_2 &  \gamma_1\\ 0 & 0 & \lambda_6 & \lambda_5 & \lambda_2 &
\lambda_1\\ 0 & 0 & \varepsilon_6  & \varepsilon_5 & \varepsilon_2 & \varepsilon_1\\ 0 & 0 & 0 & 0 & \beta_2 &  \beta_1
\\ 0 & 0 & 0 & 0 & 0 &  \alpha_1\end{array}\right)\right\}}<  \mbox{GL}_6(\mathbb R),$$ where $\alpha_1\beta_2\ne 0$
 and $\lambda_6\varepsilon_5 -\lambda_5\varepsilon_6\ne 0$. There exists an epimorphism  of $\mbox{Aut}(\lieg_{4,1}\oplus \mathbb R^2)$  onto $\mbox{GL}_2(\mathbb R)\times \R^{*}\times \R^{*}$ with unipotent kernel.   It follows that $\Gamma$ is virtually polycyclic.
\end{proof}

Lastly, we study groups which act properly discontinuously  on a contractible algebraic manifold of dimension at most three.

\begin{theorem} \label{thm: app3} {\normalfont Let $\Gamma$ be a finitely generated Zariski dense subgroup of a connected  algebraic group $G$. Let $H$ be a maximal reductive subgroup of $G$. Suppose $G$ acts algebraically on a contractible algebraic manifold $X$ such that there exists $x_0\in X$ with $Hx_0=x_0$. Suppose $\Gamma$ acts properly discontinuously on $X$.
\begin{enumerate}[(a)]
\item If $\mbox{dim}(X)=2$, then  $\Gamma$ is virtually polycyclic. % and $X$ is isomorphic to $\R^2$.
\smallskip
\item If  $\mbox{dim}(X)=3$ and $\Gamma$ is not virtually polycyclic, then $\Gamma$ is virtually free, $X$ is $G$-equivariantly isomorphic to $\R^3$, where $G$ acts by Lorentz transformations on $\R^3$.
\end{enumerate}}
\end{theorem}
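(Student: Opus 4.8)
The plan is to follow the strategy of the proof of Theorem \ref{thm: main}, but since the action is now only assumed properly discontinuous and \emph{not} cocompact, I cannot invoke Lemma \ref{lem: lem5} to make $U$ act transitively on all of $X$. Instead I would first pass to the $G$-orbit of $x_0$. By Selberg's lemma I may assume $\Gamma$ is torsion-free, so that the properly discontinuous action is in fact free. Setting $Y=Gx_0=Ux_0$, Lemma \ref{lem: lem4} gives that $Y$ is a Zariski-closed submanifold of $X$, $G$-equivariantly isomorphic to $U/V$ with $V=U\cap G_{x_0}$ closed and connected, and isomorphic to $\R^k$ with $k=\dim Y\le\dim X$; the $G$-action on $Y$ is the one in (\ref{eq: extension2}). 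As $Y$ is closed and $\Gamma$-invariant, $\Gamma$ still acts freely and properly discontinuously on $Y\cong\R^k$, so it suffices to study this action. Letting $L$ be the kernel of the $G$-action on $Y$ and inducting on $\dim G$ exactly as in Theorem \ref{thm: main} (here $\Gamma\cap L=\{e\}$ by freeness, so $\Gamma$ is unchanged), I may assume $G$ acts faithfully on $Y$. Then $H$ acts faithfully on $U/V$, so Lemma \ref{lem: lem10} makes the tangent representation $\rho\colon H\to\GL_k(\R)$ faithful, and Lemma \ref{lem: lemp11} shows $\rho(h)$ has eigenvalue one for every $h\in H$.

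Next I would dispose of the solvable case. If the maximal semisimple subgroup $S$ of $H$ is trivial, then $H$ is a torus and $G=U\rtimes H$ is solvable; Lemma \ref{lem: lem7} then shows $\Gamma$ acts \emph{crystallographically} on $Y$, and Lemma \ref{lem: lem5.5} realises $\Gamma$ as a cocompact lattice in a connected solvable Lie group, which is polycyclic (see \cite{Rag}); hence $\Gamma$ is virtually polycyclic. For Part (a) one has $k\le\dim X=2$, so $\rho$ embeds $S$ faithfully in $\GL_k(\R)$ with $k\le2$. The only nontrivial connected semisimple subgroup of $\GL_2(\R)$ is a conjugate of $\mathrm{SL}_2(\R)$ in its standard representation, in which a hyperbolic element $\mathrm{diag}(\lambda,\lambda^{-1})$ with $\lambda\ne\pm1$ has no eigenvalue equal to one, contradicting the eigenvalue-one criterion. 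Thus $S$ is trivial, we are in the solvable case, and $\Gamma$ is virtually polycyclic.

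For Part (b), assume $\dim X=3$ and $\Gamma$ not virtually polycyclic; by the solvable case $S\ne\{e\}$, and since the $k\le2$ analysis again forces $S$ trivial, we must have $k=3$, so $Y=X\cong\R^3$. The faithful $\rho|_S\colon S\to\GL_3(\R)$ has every element fixing a vector; examining a maximal split torus shows $S$ has real rank at most one (a finite union of proper hyperplanes cannot cover the torus, so a rank-two split torus cannot act faithfully on $\R^3$ with every element fixing a vector), and Lemma \ref{lem: lemp3} then yields $3=\dim\rho>\dim X_S$, i.e.\ $\dim X_S\le2$. Among rank-one semisimple groups this forces $\dim X_S=2$ and $S\cong\mathrm{PSL}_2(\R)\cong\mathrm{SO}_0(2,1)$. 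I would then argue that $\rho|_S$ is the $3$-dimensional irreducible (adjoint) representation: once irreducibility is known, the commutant is scalar, so the central torus of $H$ acts by scalars, and the eigenvalue-one condition forces it to be trivial, giving $H=S=\mathrm{SO}_0(2,1)$. After checking that the unipotent radical reduces to the translation group, one gets $U\cong\R^3$ and $X\cong\R^3$ with $G$ acting by Lorentz transformations. Virtual freeness of $\Gamma$ then follows from the three-dimensional theory of proper affine (Margulis) actions: cocompact surface groups admit no such action, so a non-virtually-polycyclic $\Gamma$ must be virtually free (see \cite{FG} and the surveys \cite{DG_1}, \cite{DG_2}, \cite{CDGM}).

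The \textbf{main obstacle} is Part (b). Pinning down that $\rho|_S$ is the irreducible Lorentz representation—in particular \emph{excluding} the competing $3$-dimensional faithful representation $\mathrm{SL}_2(\R)=\text{standard}\oplus\text{trivial}$, which also satisfies the eigenvalue-one criterion—requires a genuine proper-discontinuity argument rather than a mere dimension count, since Lemma \ref{lem: lemp3} alone does not separate the two cases. Verifying that the unipotent radical collapses to $\R^3$ acting by translations (so that $G$ is literally the identity component of the affine Lorentz group) is the accompanying structural difficulty, and the final virtual-freeness assertion genuinely depends on the Fried--Goldman/Mess theory of proper affine deformations in dimension three rather than on the general machinery of Section~2.
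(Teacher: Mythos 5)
Your overall strategy matches the paper's: pass to the orbit $Gx_0\cong U/V$, kill the kernel of the action, use Lemma \ref{lem: lem10} to obtain a faithful tangent representation of $H$, apply the eigenvalue-one criterion of Lemma \ref{lem: lemp11}, and classify the possible semisimple parts inside $\GL_2(\R)$ and $\GL_3(\R)$. Part (a) is essentially the paper's argument and is fine.

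The genuine gap is in part (b), and it is exactly the one you flag as the ``main obstacle'': you never rule out the case $S\cong\mathrm{SL}_2(\R)$ acting reducibly on $\R^3$ as standard $\oplus$ trivial. That representation is faithful, satisfies the eigenvalue-one criterion, and has $\dim X_S=2<3$, so neither Lemma \ref{lem: lemp3} nor your split-torus computation excludes it; your identification of $S$ with $\mathrm{SO}^0(2,1)$ in its irreducible representation therefore does not follow. The paper devotes the bulk of its proof of (b) to precisely this case: it splits according to the $H$-invariant filtration $\lieu_i=\gamma_i(\lieu)+\liev$ into three subcases, reducing one to the affine Auslander conjecture in dimension three (Fried--Goldman), one to part (a) via a homomorphism $\Gamma\to U/W\cong\R$, and settling the last by a genuinely geometric argument (invariant curves, $\varepsilon$-tubular neighborhoods, and the Anosov dynamics of hyperbolic elements on $U/W\cong\R^2$) showing that two hyperbolics with no common eigenspace would have to preserve a common curve, a contradiction. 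Without some such argument the proof is incomplete. Two smaller omissions: your rank argument only bounds the real rank by one, so it does not exclude the compact case $S=\mathrm{SO}(3)$, whose symmetric space is a point (the paper dispatches it by noting a discrete subgroup of a compact group is finite, forcing $\Gamma$ virtually polycyclic); and the claim that in the $\mathrm{SO}^0(2,1)$ case $V$ is normal with $U/V\cong\R^3$ abelian, so that $G$ genuinely acts by Lorentz transformations, also requires the filtration argument (faithfulness of the representation (\ref{mainrep}) forces $\lieu_c=\lieu$), which you defer rather than supply.
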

\begin{proof} Let $\Gamma_1= \Gamma\cap R$ and $G_1$ be the algebraic closure of $\Gamma_1$ in $R$. Denote $\widetilde{\Gamma}=\Gamma/\Gamma_1$ and let $S$ be a Levi factor of $H$. By Selberg's lemma, we can assume that both $\Gamma$ and $\widetilde{\Gamma}$ are torsion-free.

Now, let us consider the orbit space $Gx_0$. It is an algebraic submanifold of $X$ that is $G$-equivariantly isomorphic to the quotient of the unipotent radical of $G$ by a closed connected subgroup. We denote this submanifold by $Y$ and define $\widetilde{Y}=Y/G_1$. Note that when $\Gamma$ acts cocompactly, then $Y=X$ and $\widetilde{Y}=\widetilde{X}$. Proceeding as in the proof of Theorem \ref{thm: main}, it follows that the action of $S$ on the tangent space $T_{\tilde{x}_0}\widetilde{Y}$ of $\widetilde{Y}$ at $\tilde{x}_0=G_1x_0$ is faithful. Then, the representation $\rho:S\hookrightarrow \mbox{GL}(T_{\tilde{x}_0}\widetilde{Y})$ is faithful.

%Without loss of generality, we can assume that $G$ is connected and $\Gamma$ is a Zariski dense subgroup. Let $\Gamma_1= \Gamma\cap R$ and let $G_1$ be the algebraic closure of $\Gamma_1$ in $R$. We denote $\widetilde{\Gamma}=\Gamma/\Gamma_1$,  and $\widetilde{X}=X/G_1$. Let $S$ be a maximal semisimple subgroup in $H$. Proceeding as in the proof of Theorem \ref{thm: main}, it follows that the action of $S$ on the tangent space $T_{\tilde{x}_0}\widetilde{X}$ of $\widetilde{X}$ at $\tilde{x}_0=G_1x_0$ is faithful.

To prove (a), observe that  $\mbox{SL}_2(\R)$ is the only nontrivial connected semisimple subgroup of $\mbox{GL}_2(\R)$. But, $\mbox{SL}_2(\R)$ does not satisfy the assertion of Lemma \ref{lem: lemp11}. Therefore, $S$ must be trivial.

For part (b), we again observe that the groups  $\mbox{SO}(3)$, $\mbox{SO}^0(2,1)$, $\mbox{SL}_2(\R)\times \{1\}$, and $\mbox{SL}_3(\R)$ are the only nontrivial connected semisimple subgroups of $\mbox{GL}_3(\R)$. The case  $S=\mbox{SL}_3(\R)$ is impossible by Lemma \ref{lem: lemp11}.
Suppose that $S=\mbox{SO}(3)$. Then $\widetilde{\Gamma}$ is trivial, because it is a discrete subgroup of a compact group. So, $\Gamma$ is virtually polycyclic.

Next, let us assume $S=\mbox{SL}_2(\R)\times \{1\}$. It follows that $\mbox{dim}(\widetilde{Y})=3$, otherwise the representation $\rho:S\hookrightarrow \mbox{GL}(T_{\tilde{x}_0}\widetilde{Y})$ would violate the eigenvalue one criteria of \ref{lem: lemp11}.
We deduce that $\widetilde{Y}= Y=X=\widetilde{X}$ and $\widetilde{\Gamma}=\Gamma$. It follows that $G$ acts transitively on $X$ and $X$ is $G$-equivariantly isomorphic to the quotient of the unipotent radical $U$ by a closed and connected subgroup $V$ (see \ref{lem: lem6}).  Next, we proceed as in the proof of \ref{lem: lem10}.

First, we can assume that $V$ does not contain a nontrivial normal subgroup of $U$. We denote by $\lieu$ the Lie algebra of $U$ and by $\liev$ the subalgebra corresponding to $V$. We suppose that
$\lieu$ is $c$-step nilpotent and define the subalgebras
$\lieu_i=\gamma_i(\lieu)+ \liev$ forming a descending sequence
\[ \lieu=\lieu_1\supseteq \lieu_2\supseteq \lieu_3 \supseteq \cdots \supseteq \lieu_c \supseteq \liev=\lieu_{c+1}\]
of subalgebras, which are invariant under the action of $H$. In addition, for each $i$, $\lieu_{i+1}$ is an ideal in $\lieu_{i}$. We obtain the following faithful representation of $H$ (see (\ref{mainrep}))
\[\psi: H \rightarrow \GL(\lieu_1/\lieu_2)\times \GL(\lieu_2/\lieu_3) \times \cdots \times \GL(\lieu_c/\lieu_{c+1}).
\]
We distinguish three possible cases. Either $\lieu_c=\lieu$ and $\GL(\lieu_c/\lieu_{c+1})\cong \GL_3(\R)$ or $\lieu_c\ne\lieu$ and $\psi:H\rightarrow \R^*\times \GL_2(\R)$ or  $\lieu_c\ne\lieu$  and $\psi:H\rightarrow \GL_2(\R)\times \R^*$.

In the first case, $\Gamma$ admits an affine action on $\R^3$. So, by applying Theorem 2.1 of \cite{FG}, we conclude that it is virtually polycyclic.

In the second case, there is an $H$-invariant proper ideal  $\liew$ of $\lieu$ such that $\lieu/\liew\cong \R$  and $\liew/\liev\cong \R^2$. Since $\psi$ is faithful and satisfies the eigenvalue one criteria, it is not difficult to see that $H$ acts trivially on $\lieu/\liew$.
Thus, the image of the representation  $\rho:H\hookrightarrow \mbox{GL}(\lieu/\liev)$ lies inside
$$\begin{pmatrix}

\mbox{GL}_2(\R)

& \vrule & \begin{array}{c}

\ast \\

\ast

\end{array}\\

\hline \\[-0.48cm]

0 \hspace{6mm} 0 & \vrule &

1

\end{pmatrix}$$

 Let $W$ be the normal unipotent subgroup of $U$ corresponding to $\liew$. Consider the map $\phi:\Gamma\rightarrow U/W\cong \R$ given by $\gamma\mapsto [\gamma x][x^{-1}]$ where $x\in U/V$ and $[\hspace{1mm}]:U\to U/W$ is the quotient homomorphism. We claim that this map is independent of the choice of $x$ and hence it is a homomorphism. Indeed, since $G$ acts transitively on $U/V$, it acts as in (1). Because $H$ acts trivially on $U/W$, it follows that for $\gamma=uh$, $u\in U$, $h\in H$, we have $[\gamma x][x^{-1}]= [uhxh^{-1}][x^{-1}]=[u][x][x^{-1}]=[u]$. Now, it is routine to check that $\phi$ is a homomorphism. The kernel of $\phi$ is the subgroup of $\Gamma$ that leaves invariant the submanifold $W/V\cong \R^2$. Thus, by part (a), it is virtually polycyclic and so, $\Gamma$ is virtually polycyclic.

In the third and final case, there is an $H$-invariant proper ideal  $\liew$ of $\lieu$ such that $\lieu/\liew\cong \R^2$  and $\liew/\liev\cong \R$. It follows that $H$ acts trivially on $\liew/\liev$. By identifying the tangent space $T_{{x}_0}{X}\cong \lieu/\liev$ with $\R^3$, we obtain that the image of the representation  $\rho:H\hookrightarrow \mbox{GL}(\lieu/\liev)$ is a subgroup of
$$\begin{pmatrix}

1  & \vrule & \ast \hspace{6mm} \ast\\

\hline \\[-0.48cm]

\begin{array}{c}

0\\

0

\end{array}

& \vrule &
\mbox{GL}_2(\R)

\end{pmatrix}$$

Let us suppose that $\Gamma$ is not solvable. By Lemma 2.10 of \cite{FG}, there are elements $\gamma_1$ and $\gamma_2$ in $\Gamma$ such that the projections to $\mbox{GL}_2(\R)$ of their images in $\rho(H)$  are hyperbolic (have real distinct inverse eigenvalues) and share no common eigenspace in $\lieu/\liew$.

The action of $\Gamma$ on $U/V$ descends to the plane $U/W$. It follows that there is a unique point $[u_i]\in U/W$ invariant under the action of $\gamma_i$ for $i=1,2$. This means that the curve $l_i=u_iW/V$ in $U/V$ is invariant under action of $\gamma_i$ where $u_i\in U$ is a preimage of $[u_i]$.
Next, we will argue that these curves coincide. To see this, first we choose a Riemannian metric on the quotient manifold $X/\Gamma$ and endow $X$ with the covering space metric. Let $\Lambda_i=\langle \gamma_i \rangle$  for $i=1,2$ and consider the quotient map $U/V\to U/W\cong \R^2$ which is $\Gamma$-equivariant.
%$$\begin{CD} U/V\cong X @ >[\hspace{1mm}]>> U/W\cong \R^2\\
%@V{\operatorname /{\Lambda_i}}VV @V{\operatorname /{\Lambda_i}}VV @.\\
%(U/V)/{\Lambda_i} @ >>> (U/W)/{\Lambda_i}
%\end{CD}$$
%Let $\varepsilon>0$ and denote by $W_i^{\varepsilon}$ the $\varepsilon$-tubular neighborhood of $l_i$ in $X$. Let$N_i^{\varepsilon}$ be the quotient of $W_i^{\varepsilon}$ in $(U/V)/{\Lambda_i}$.  The image of $N_i^{\varepsilon}$ in $(U/W)/{\Lambda_i}$ is a neighborhood of the point ${\Lambda_i}[u_i]$. Note that  $W_i^{\varepsilon}$ is the preimage of $N_i^{\varepsilon}$ in $X$, because the action of $\Lambda_i$ leaves $W_i^{\varepsilon}$ invariant,.  Since $\gamma_i$ acts as an Anosov diffeomorphism of $U/W$ fixing $[u_i]$, it is not difficult to see that the preimage of any neighborhood of ${\Lambda_i}[u_i]$ in $U/W$ contains the lines through $[u_i]$ (coordinate axes) contracting and expanding by $\tilde{\gamma}_i$.  By commutativity of the diagram, it follows that the image of $W_i^{\varepsilon}$ in $U/W$ contains the coordinate axes of $[u_i]$. Let $[u_0]$ be an intersection point of the coordinate axes of $[u_1]$ and $[u_2]$. We have just shown that both $W_1^{\varepsilon}$ and $W_2^{\varepsilon}$ contain the curve $u_0W/V$. This implies that every point on  $u_1W/V$ is arbitrarily close to $u_2W/V$ and hence the two curves must coincide.
Let $\varepsilon>0$ and denote by $W_i^{\varepsilon}$ the $\varepsilon$-tubular neighborhood of $l_i$ in $U/V$. Since the action of $\Lambda_i$ on $U/V$ leaves $W_i^{\varepsilon}$ invariant, the image of $W_i^{\varepsilon}$ in $U/W$ is a $\Lambda_i$-equivariant neighborhood of the point $[u_i]$.  Since $\gamma_i$ acts as an Anosov diffeomorphism of $U/W$ fixing $[u_i]$, it is not difficult to see that any $\Lambda_i$-equivariant neighborhood of $[u_i]$ in $U/W$ contains the $\Lambda_i$--invariant lines through $[u_i]$ on which $\gamma_i$ acts respectively  as a contracting and expanding map. Let us refer to these two lines as the coordinate axes of $[u_i]$. It follows that the image of $W_i^{\varepsilon}$ in $U/W$ contains the coordinate axes of $[u_i]$. Let $[u_0]$ be an intersection point of the coordinate axes of $[u_1]$ and $[u_2]$. We have just shown that both $W_1^{\varepsilon}$ and $W_2^{\varepsilon}$ contain the curve $u_0W/V$. This implies that every point on  $u_1W/V$ is arbitrarily close to $u_2W/V$ and hence the two curves must coincide.

Now, the group generated by $\gamma_1$ and $\gamma_2$  acts properly discontinuously on the curve $u_1W/V$ and is therefore cyclic. This is a contradiction to the fact that $\tilde{\gamma}_1$ and $\tilde{\gamma}_2$ have no common eigenspace.

Finally, we suppose  $S=\mbox{SO}^0(2,1)$. By Corollary 5.2 in \cite{Abels} (see also 8.24 of \cite{Rag}), $\widetilde{\Gamma}$ is a discrete subgroup of $G/R$ which is locally isomorphic to $S$. Therefore, $\widetilde{\Gamma}$ is a discrete subgroup of isometries of the hyperbolic plane $\mathbb H^2$ and thus $\mbox{cd}(\widetilde{\Gamma})\leq 2$. If $\widetilde{Y}/\widetilde{\Gamma}$ is compact, then  $\widetilde{\Gamma}$ acts crystallographically on $\widetilde{Y}$ and $\mbox{dim}(\widetilde{Y})\leq 2$. So, by part (a), $\widetilde{\Gamma}$ is virtually polycyclic. If $\Gamma$ is not virtually polycyclic, then $\mbox{dim}(\widetilde{Y})=3$. In this particular case, we will not assume that $\Gamma$ is torsion-free. Passing to a quotient of ${\Gamma}$ by a finite subgroup,  we can assume that $G$ acts faithfully on $X$.  We deduce that $\widetilde{Y}= Y=X=\widetilde{X}$ and $\widetilde{\Gamma}$ is a quotient of $\Gamma$ by a finite subgroup. It follows that $G$ acts transitively on $X$ and $X$ is $G$-equivariantly isomorphic to the quotient of $U$ by a closed and connected subgroup $V$. We will argue that $V$ is a normal subgroup of $U$ such that $U/V$ is abelian.  As before, we can assume that $V$ does not contain a nontrivial normal subgroup of $U$ and construct a faithful representation $\psi$ of $H$.
It is not difficult to see that, since $\mbox{SO}^0(2,1)\leq H$, the representation is only  faithful when $\lieu_c=\lieu$ and $\GL(\lieu_c/\lieu_{c+1})\cong \GL_3(\R)$. This finishes our claim.

Since there are no proper  connected algebraic subgroups of $\mbox{GL}_3(\R)$ properly containing $\mbox{SO}(2,1)$ such that all their elements have an eigenvalue equal to one, it follows that $H=\mbox{SO}(2,1)$. Hence, $G \cong U\rtimes \mbox{SO}(2,1)$ and $G$ acts on $ U/V\cong \R^3$ by Lorentz transformations.

Now, according to a theorem of Mess (see \cite{Mess}), $\widetilde{\Gamma}$ cannot contain the fundamental group of a closed surface. This implies that $\mbox{vcd}(\widetilde{\Gamma})=1$ an so,  by Stallings's theorem,  $\widetilde{\Gamma}$ is virtually free. Then, ${\Gamma}$ is finite by virtually free and it is also virtually torsion-free. Therefore, $\Gamma$ is virtually free.\end{proof}

Immediately from the theorem, we obtain the generalized Auslander conjecture (see Question 1) up to dimension three.

\begin{corollary}\label{general_Auslander_dim3}{\normalfont Let $G$ be an algebraic group acting algebraically on a contractible algebraic manifold $X$ of dimension at most three such that a maximal reductive subgroup of $G$ fixes a point of $X$. If $\Gamma$ acts crystallographically on $X$ via a representation $\rho:\Gamma\rightarrow G$, then $\Gamma$ virtually polycyclic.}
\end{corollary}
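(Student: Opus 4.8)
The plan is to reduce the hypotheses to those of Theorem \ref{thm: app3} and then eliminate the single exceptional case by exploiting cocompactness. First I would dispose of the representation $\rho$: since $\Gamma$ acts properly discontinuously through $\rho$, its image is discrete and the kernel $\mbox{ker}(\rho)$ must be finite, so $\Gamma$ is virtually polycyclic if and only if $\rho(\Gamma)\subseteq G$ is, and we may replace $\Gamma$ by $\rho(\Gamma)$. By Lemma \ref{lem: lem2} this group is finitely generated. Replacing $G$ by the Zariski closure of $\Gamma$ and passing to finite-index subgroups where necessary, we may assume $G$ is connected and $\Gamma$ is Zariski dense in $G$; this is harmless since virtual polycyclicity is inherited by and detected on finite-index subgroups. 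The one point requiring care is that the fixed-point hypothesis survives this shrinking of $G$: a maximal reductive subgroup $H'$ of the Zariski closure lies inside a maximal reductive subgroup $H$ of the original group, and as maximal reductive subgroups are conjugate, $H$, hence $H'$, fixes a point, which we rename $x_0$.

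With these reductions $\Gamma$ is a finitely generated Zariski dense subgroup of a connected algebraic group satisfying the hypotheses of Theorem \ref{thm: app3}, and the crystallographic action is in particular properly discontinuous. When $\mbox{dim}(X)\leq 2$, part (a) of that theorem, together with the trivial cases $\mbox{dim}(X)\leq 1$, already gives that $\Gamma$ is virtually polycyclic. When $\mbox{dim}(X)=3$, part (b) leaves exactly one alternative to virtual polycyclicity: $\Gamma$ is virtually free and $G$ acts on $X\cong\R^3$ by Lorentz transformations.

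The crux is to exclude this Lorentz alternative, and this is precisely where cocompactness, which is part of the crystallographic hypothesis but absent from Theorem \ref{thm: app3}, enters. By Selberg's lemma I would choose a torsion-free finite-index subgroup $\Gamma'$ of $\Gamma$; it still acts freely, properly discontinuously and cocompactly on the contractible $3$-manifold $X$. Lemma \ref{lem: lem3} then forces $\mbox{cd}(\Gamma')=\mbox{dim}(X)=3$. On the other hand a virtually free group has virtual cohomological dimension at most one, so $\mbox{cd}(\Gamma')\leq 1$, a contradiction. Hence the Lorentz case cannot occur, and $\Gamma$ is virtually polycyclic in every dimension up to three.

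I expect the main obstacle to be the bookkeeping in the reduction step, namely verifying that the maximal-reductive fixed-point condition is inherited by the Zariski closure and that connectedness and Zariski density can be arranged simultaneously without losing cocompactness. The genuinely geometric content, ruling out the Lorentz representation, is by contrast handled cleanly by the cohomological dimension count of the previous paragraph once the cocompactness built into the crystallographic assumption is brought to bear.
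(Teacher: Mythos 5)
Your proposal is correct and follows essentially the route the paper intends: the paper states the corollary as ``immediately from the theorem,'' and your argument simply fills in the expected details, namely the reduction to the hypotheses of Theorem \ref{thm: app3} and the exclusion of the Lorentz case because a virtually free group has virtual cohomological dimension at most one, whereas Lemma \ref{lem: lem3} forces it to equal three for a cocompact action. No discrepancy with the paper's reasoning.
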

\begin{center}\textbf{Acknowledgements}\end{center}

We are grateful to Fritz Grunewald for his correspondence during an earlier version of the paper. We also thank Oliver Baues for the useful remarks on Lemma \ref{lem: lem8}. Finally, we thank the  referee for  the helpful comments.


\begin{thebibliography}{12345}
\addcontentsline{toc}{section}{References}

\bibitem{AMS} Abels, H., Margulis, G.A., Soifer, G.A., {\em Properly Discontinuous Groups of Affine Transformations with Orthogonal Linear Part}, C. R. Acad. Sci. Paris S\'{e}r. I Math. \textbf{324} (3) (1997), 253--258


\bibitem{Abels} Abels, H., {\em Properly Discontinuous  Groups of Affine Transformations: A survey}, Geometriae Dedicata \textbf{87}(2001), 309--333

\smallskip


\bibitem{ausl1} Auslander, L., \emph{Bieberbach's Theorem on Space Groups and Discrete Uniform
Subgroups of Lie Groups}, Ann. of Math. (2), (1960), 71 (3), 579--590.

\smallskip

\bibitem{ausl2} Auslander, L., \emph{The structure of complete locally affine manifolds}, Topology, (1964),
3 Suppl. 1., 131-139.

\smallskip

\bibitem{bieb1} Bieberbach, L.,  \emph{\"{U}ber die Bewegungsgruppen der Euklidischen Raume I},
 Math. Ann. (1911), 70, 297-336.

\smallskip

\bibitem{bieb2} Bieberbach, L., \emph{\"{U}ber die Bewegungsgruppen der Euklidischen Raume II},
 Math. Ann. (1912), 72, 400-412.

\smallskip

\bibitem{baues} Baues, O.,  {\em Infra-solvmanifolds and rigidity of subgroups in solvable linear
algebraic groups}, Topology \textbf{43} (2004) 903--924.

\smallskip

\bibitem{Benoist} Benoist, Y.   {\em Une nilnariet\'e non affine},
J. Differential Geom.  \textbf{41} (1)  (1995) 21--52.


\smallskip

\bibitem{BD} Benoist, Y., Dekimpe, K.  {\em The uniqueness of polynomial crystallographic actions}, Math. Ann. \textbf{322} (2002) 563--571.


\smallskip

\bibitem{BCR} Bochnak, J., Coste, M., and Roy M., {\em Real Algebraic Geometry}, Springer-Verlag, New York (1998).

\smallskip


\bibitem{Borel} Borel, A., {\em Linear Algebraic Groups}, 2nd enlarged edition, Springer-Verlag (1991)

\smallskip

\bibitem{brown} Brown, K., {\em Cohomology of Groups}, Grad. Texts in Mathematics, \textbf{87} Springer-Verlag, New York (1982)

\smallskip

\bibitem{BDD} Burde, D., Dekimpe, K., and Deschamps, S., {\em The Auslander Conjecture for NIL-affine Crystallographic Groups}, Math. Ann. \textbf{332} No. 1(2005), 161--176

\smallskip

\bibitem{CDGM} Charette, V., Drumm, T., Goldman, W., and Morrill, M. {\em Complete Flat Affine and Lorentzian Manifolds},
Geometriae Dedicata \textbf{97} (2003), 187--198.

\smallskip

\bibitem{CG} Corwin, L.J., and Greenleaf, F.P., {\em Representations of Nilpotent Lie Groups and Their Applications}, Cambridge Studies in Advance Mathematics \textbf{18}, Cambridge University Press (1990)


\smallskip

\bibitem{dek1} Dekimpe, K., {\em Polynomial crystallographic actions on the plane},
 Geom. Dedicata  \textbf{93}  (2002), 47--56.


\bibitem{dek} ---, {\em Any virtually polycyclic group admits a NIL-affine crystallographic action},
Topology \textbf{42} (2003), 821--832.

\smallskip


\bibitem{DI} Dekimpe, K.,  Igodt, P., {\em  Polycyclic-by-finite groups admit a bounded-degree polynomial structure.}
 Invent. Math.  \textbf{129} (1)  (1997),  121--140.

\smallskip

\bibitem{DG_1} Drumm, T.A. and Goldman, W., {\em Complete flat Lorentz $3$-manifolds with free fundamental group},
Int. J. of Math. \textbf{1} (1990), 149--161.

\smallskip

\bibitem{DG_2} ---, {\em The geometry of crooked planes},
Topology \textbf{38} (1999), 323--351.

\smallskip

\bibitem{FG} Fried, D., and Goldman, W.M., {\em Three-Dimensional Affine Crystallographic Groups},
 Adv. in Math., \textbf{47} No. 1 (1983), 1--49.

\smallskip

\bibitem{Gromov} Gromov, M., {\em Almost flat manifolds}, J. Differential Geom., \textbf{13}  (1978), 231--241.

\bibitem{LR}  Lee, K.B., Raymond, F., {\em Rigidity of almost crystallographic groups.}
 Combinatorial methods in topology and algebraic geometry (Rochester, N.Y., 1982),
 73--78, Contemp. Math., 44, Amer. Math. Soc., Providence, RI,  1985.

\smallskip

\bibitem{Magnin} Magnin, L., {\em Adjoint and Trivial Cohomology Tables for Indecomposable Nilpotent Lie Algebras of Dimension $\leq7$ over $\C$}, Electronic Book, (1995)


\smallskip


\bibitem{Margulis} Margulis, G.A., {\em Discrete Subgroups of Semisimple Lie Groups}, Springer-Verlag, Berlin-Hei\-del\-berg-NewYork (1991).

\smallskip



\bibitem{Mess} Mess, G., {\em Lorentz spacetimes of constant curvature}, Preprint Institut des Hautes \'{E}tudes Scientifiques (1990). Geometriae Dedicata \textbf{126} (2007), 3--45.

\smallskip

\bibitem{miln} Milnor, J. {\em On fundamental groups of complete affinely flat manifolds.}
 Advances in Math.  \textbf{25} (2)  (1977),  178--187.

\smallskip


\bibitem{OV} Onishchik, A., and Vinberg, E., {\em Lie Groups and Lie Algebras III}, Encyclopedia of Mathematical Sciences  \textbf{41}, Springer-Verlag (1994)

\smallskip

\bibitem{Rag} Raghunathan, M.S., {\em Discrete subgroups of Lie groups}, Springer-Verlag (1972).

\smallskip

\bibitem{Ros} Rosenlicht, M., {\em On quotient varieties and the affine embedding of certain homogeneous spaces}, Trans. Am. Math. Soc. \textbf{101} (1961),  211--223.

\smallskip


\bibitem{Ruh} Ruh, E.A., {\em Almost flat manifolds}, J. Differential Geom. \textbf{17}  (1982), 1--14.

\smallskip

\bibitem{Shaf} Shafarevich, I.R., {\em Basic algebraic geometry}, Springer-Verlag, Berlin, 2nd edition (1994).

\smallskip

\bibitem{Soifer} Soifer, G.A., {\em Affine Crystallographic Groups}, Amer. Math. Soc. Transl. (2) \textbf{163} No. 4 (1995), 165--170.

\smallskip

\bibitem{Tomanov} Tomanov, G., \emph{The virtual solvability of the fundamental group
of a generalized Lorenz space form}, J. Differential Geom. (1990), 32 2, 539-547.

\smallskip

\bibitem{wilson} Wilson, E.N., {\em Isometry groups on homogeneous nilmanifolds}, Geometriae Dedicata, Vol 12 (1982) 337--346.

\smallskip

\bibitem{Zass} Zassenhaus, H. {\em \"{U}ber einen Algorithmus zur Bestimmung der Raumgruppen}, Commentarii Mathematici Helvetici \textbf{21} (1948)
117--141.

%\bibitem{AM1} H. Abels, G. Margulis and G. Soifer, \emph{Semigroups containing proximal linear maps},
%Israel J. Math. (1995), 91, 1-30.
%
%\smallskip

%\bibitem{AM2} H. Abels, G. Margulis and G. Soifer, \emph{Properly Discontinuous Groups of Affine
%Transformations with Orthogonal Linear Part} C. r. Acad. Sci. Paris S\'{e}r. I. Math. (1997), 324 I, 253-258.
%
%\smallskip

%\bibitem{AM3} H. Abels, G. Margulis and G. Soifer, \emph{On the Zariski closure of the linear part of a properly
%discontinuous group of affine transformations} J. Differential Geom. (2002), 60 2, 315-344.


%
%\bibitem{dek} K. Dekimpe, \emph{Any virtually polycyclic group admits a NIL-affine crystallographic action},
%Topology (2003), 42, 821-832.
%
%\smallskip

%
%\bibitem{GK} W. Goldman and Y. Kamishima, \emph{The fundamental group of a compact flat Lorentz
%space form is virtually polycyclic}, J. Differential Geom. (1984), 19, 233-240.
%
%\smallskip
%
%\bibitem{GM} F. Grunewald and G. Margulis, \emph{Transitive and quasitransitive
%actions of affine groups preserving a generalised Lorentz-structure}, J. Geometry and Physics (1988), 5 (4), 493-531.
%
%\smallskip



%\bibitem{serre} J.-P. Serre, {\bf Cohomologie des groupes discretes}, Annals
%of Math. Studies {\bf 70} , Princeton (1971).
%
%\smallskip

%\bibitem{tal} O. Talelli, \emph{Periodicity in Group Cohomology and
%Complete Resolutions}, to appear in Bull. of LMS. \textbf{37}
%(2005), 547--554.

%\bibitem{tal2} O. Talelli , {\it Periodic cohomology and free and proper actions
%on $\mathbb R^n\times S^m$}, Groups St. Andrews 1997 in Bath, II,
%London Math. Soc., Lecture Note Ser., vol. 261, Cambridge
%University Press, Cambridge, 1999, pp. 701-717.

%\smallskip
%
%\bibitem{tom} G. Tomanov, \emph{The virtual solvability of the fundamental group
%of a generalized Lorenz space form}, J. Differential Geom. (1990), 32 2, 539-547.
%
%\smallskip

%\bibitem{wall} C.T.C. Wall, \emph{The Topological Space-Form Problems}, Topology of Manifolds
%(Athens, GA, 1969), J.C. Cantrell and C.H. Edwards, Jr., eds, Markham, Chicago (1970), 319-331.
%
%\smallskip

%\bibitem{wolf} J. A. Wolf, {\bf Spaces of Constant Curvature}, McGraw-Hill, New York (1967).


%\bibitem{AM1} H. Abels, G. Margulis and G. Soifer, \emph{Semigroups containing proximal linear maps},
%Israel J. Math. (1995), 91, 1-30.
%
%\smallskip

%\bibitem{AM2} H. Abels, G. Margulis and G. Soifer, \emph{Properly Discontinuous Groups of Affine
%Transformations with Orthogonal Linear Part} C. r. Acad. Sci. Paris S\'{e}r. I. Math. (1997), 324 I, 253-258.
%
%\smallskip

%\bibitem{AM3} H. Abels, G. Margulis and G. Soifer, \emph{On the Zariski closure of the linear part of a properly
%discontinuous group of affine transformations} J. Differential Geom. (2002), 60 2, 315-344.


%
%\bibitem{dek} K. Dekimpe, \emph{Any virtually polycyclic group admits a NIL-affine crystallographic action},
%Topology (2003), 42, 821-832.
%
%\smallskip

%
%\bibitem{GK} W. Goldman and Y. Kamishima, \emph{The fundamental group of a compact flat Lorentz
%space form is virtually polycyclic}, J. Differential Geom. (1984), 19, 233-240.
%
%\smallskip
%
%\bibitem{GM} F. Grunewald and G. Margulis, \emph{Transitive and quasitransitive
%actions of affine groups preserving a generalised Lorentz-structure}, J. Geometry and Physics (1988), 5 (4), 493-531.
%
%\smallskip



%\bibitem{serre} J.-P. Serre, {\bf Cohomologie des groupes discretes}, Annals
%of Math. Studies {\bf 70} , Princeton (1971).
%
%\smallskip

%\bibitem{tal} O. Talelli, \emph{Periodicity in Group Cohomology and
%Complete Resolutions}, to appear in Bull. of LMS. \textbf{37}
%(2005), 547--554.

%\bibitem{tal2} O. Talelli , {\it Periodic cohomology and free and proper actions
%on $\mathbb R^n\times S^m$}, Groups St. Andrews 1997 in Bath, II,
%London Math. Soc., Lecture Note Ser., vol. 261, Cambridge
%University Press, Cambridge, 1999, pp. 701-717.

%\smallskip
%
%\bibitem{tom} G. Tomanov, \emph{The virtual solvability of the fundamental group
%of a generalized Lorenz space form}, J. Differential Geom. (1990), 32 2, 539-547.
%
%\smallskip

%\bibitem{wall} C.T.C. Wall, \emph{The Topological Space-Form Problems}, Topology of Manifolds
%(Athens, GA, 1969), J.C. Cantrell and C.H. Edwards, Jr., eds, Markham, Chicago (1970), 319-331.
%
%\smallskip

%\bibitem{wolf} J. A. Wolf, {\bf Spaces of Constant Curvature}, McGraw-Hill, New York (1967).

\end{thebibliography}
\end{document}